\definecolor{dark-red}{rgb}{0.5,0.15,0.15}
\definecolor{dark-blue}{rgb}{0.15,0.15,0.6}
\definecolor{dark-green}{rgb}{0.15,0.6,0.15}
\numberwithin{equation}{section}
\newtheorem{theorem}{Theorem}[section]
\newtheorem{lemma}[theorem]{Lemma}
\newtheorem{proposition}[theorem]{Proposition}
\newtheorem{corollary}[theorem]{Corollary}
\newenvironment{prf}[1]{\trivlist
\item[\hskip
\labelsep{\it #1.\hspace*{.3em}}]}{
\endtrivlist}
\newtheorem{predefinition}[theorem]{Definition}
\newtheorem{preremark}[theorem]{Remark}
\newenvironment{remark}{\begin{preremark}\rm}{\end{preremark}}
\newtheorem{prenotation}[theorem]{Notation}
\newenvironment{notation}{\begin{prenotation}\rm}{\end{prenotation}}
\newtheorem{preexample}[theorem]{Example}
\newenvironment{example}{\begin{preexample}\rm}{\end{preexample}}
\newtheorem{preclaim}[theorem]{Claim}
\newtheorem{prequestion}[theorem]{Question}
\def\emppsubsection{\@startsection{subsection}{2}{\z@}{-3.25ex plus -1ex minus -.2ex}{-1em}{\bf}}
\title{Cohomology groups of Fermat curves
via ray class fields of cyclotomic fields}
\author{Rachel Davis}
\address{University of Wisconsin-Madison}
\email{rachel.davis@wisc.edu}
\author{Rachel Pries}
\address{Colorado State University}
\email{pries@math.colostate.edu}
\thanks{We would like to thank Vesna Stojanoska and Kirsten Wickelgren, for their help with Lemmas \ref{Lstabilize} and 
\ref{Lchangesection} and, more generally, for their ideas in developing this project and 
a wonderful experience collaborating together.  We would like to thank Gras and Maire for suggesting Proposition \ref{Prational} and Nguyen-Quang-Do for helpful comments.
We would like to thank AIM for support for this project through a Square collaboration grant.
Pries was partially supported by NSF grants DMS-15-02227 and DMS-19-01819. 
}
\begin{document}

\maketitle

\begin{abstract}
The absolute Galois group of the cyclotomic field $K={\mathbb Q}(\zeta_p)$
acts on the \'etale homology of the Fermat curve $X$ of exponent $p$.
We study a Galois cohomology group
which is valuable for measuring an obstruction for $K$-rational points on $X$.
We analyze a $2$-nilpotent extension of $K$ which contains the
information needed for measuring this obstruction.
We determine a large subquotient of this Galois cohomology group which arises from Heisenberg extensions of $K$.
For $p=3$, we perform a Magma computation with ray class fields, 
group cohomology, and Galois cohomology which determines it completely.\\
MSC10: primary 11D41, 11R18, 11R34, 11R37, 11Y40; secondary 13A50, 14F20, 20D15, 20J06, 55S35.\\
Keywords: cyclotomic field, class field theory, ray class field, absolute Galois group, Heisenberg group, 
Fermat curve, homology, Galois cohomology, obstruction, transgression.
\end{abstract}




\section{Introduction}

Let $p$ be an odd prime and let $r=(p-1)/2$.
Consider the cyclotomic field $K=\mathbb{Q}(\zeta_p)$.
Let $Q={\rm Gal}(L/K)$ where $L$ is the splitting field of the polynomial $1-(1-x^p)^p$.
Then $Q$ is an elementary abelian $p$-group.  
For $p$ satisfying Vandiver's conjecture, the rank of $Q$ is $r+1$ \cite[Proposition~3.6]{WINF:birs}.

Let $E$ be the maximal elementary abelian $p$-group extension of $L$ ramified only over $p$.
The field $E$ is contained in a ray class field of $L$.
Let $G={\rm Gal}(E/K)$.  Then, letting $N={\rm Gal}(E/L)$, there is a short exact sequence
\begin{equation} \label{shortexact}
1 \to N \to G \to Q \to 1.
\end{equation}
If $p$ is a regular prime, we prove that ${\rm dim}_{{\mathbb F}_p}(N) = 1 + p^{r+1}(p-1)/2$ in Proposition \ref{Prational}.

There is an element $\omega \in \rm{H}^2(Q, N)$ which classifies the extension \eqref{shortexact}
and determines the isomorphism class of the group $G$. 
After choosing generators for $Q$ and a splitting $s: Q \to G$, 
then $\omega$ is determined by certain elements $a_i, c_{j,k} \in N$ for $0 \leq i \leq r$ and $0 \leq j < k \leq r$, 
see Section \ref{Sclass}. 

Suppose $M$ is a $G$-module on which $N$ acts trivially.
The inflation-restriction exact sequence yields a short exact sequence:
\begin{equation} \label{bigexact}
0 \to \rm{H}^1(Q, M) \to \rm{H}^1(G, M) \to {\rm Ker}(d_2) \to 0, 
\end{equation}
where $d_2:\rm{H}^1(N, M)^Q \to \rm{H}^2(Q, M)$ is the transgression map, which depends on 
$\omega$ as in \cite[3.7]{Koch} and \cite[1.6.6, 2.4.3]{Neukirch-Schmidt-Wingberg}.
In \cite[Theorem 6.11]{WINF:Baction}, given $\phi \in {\rm{H}om}(N, M)^Q$, 
we prove that the class of $\phi$ is in ${\rm Ker}(d_2)$ if and only if 
the values of $\phi$ on $a_i$ and $c_{j,k}$ satisfy certain algebraic properties, see Theorem~\ref{Tkerd2}.

This paper is about the Galois cohomology of the homology of Fermat curves.
The Fermat curve $X$ of exponent $p$ is the smooth curve in ${\mathbb P}^2$ with equation $x^p+y^p=z^p$.
Let $\rm{H}_1(X)$ denote the \'etale homology of $X$.
Anderson proves that $N$ acts trivially on $\rm{H}_1(X; \mathbb{Z}/p \mathbb{Z})$, \cite[Section~10.5]{Anderson}. 
In this paper, we study the Galois cohomology group $\rm{H}^1(G, \rm{H}_1(X; \mathbb{Z}/p \mathbb{Z}))$.
More generally, we study $\rm{H}^1(G, M)$ for subquotients $M$ of the relative homology $\rm{H}_1(U, Y; \mathbb{Z}/p \mathbb{Z})$ where 
$U$ is the affine curve $x^p+y^p=1$ and $Y$ is the set of $2p$ cusps where $xy=0$.
The motivation for studying this Galois cohomology group and ${\rm Ker}(d_2)$
is in Section~\ref{Smotivation}.

The main theme of the paper is that Galois extensions of the cyclotomic field $K$ 
determine information about the kernel of the transgression map ${\rm Ker}(d_2)$.
In Section \ref{Srayfield}, under the condition that $\mathop{\rm Cl}(L)[p]$ is trivial, we analyze how 
$N$, $G$, $\omega$, and ${\rm Ker}(d_2)$ are determined from certain ray class field extensions. 

Suppose $p$ is an odd prime satisfying Vandiver's Conjecture.
The main result of the paper is Theorem \ref{PHeisKer}, in which we determine the subspace of ${\rm Ker}(d_2)$ 
arising from Heisenberg extensions of $K$. 
In Section \ref{Sheisenbergbig}, we calculate the dimension of this subspace for some small $p$.
More generally, in Section \ref{Ssec4}, we consider natural subextensions $\bar{E}/L$ of $E/L$, 
which lead to quotients $\bar{N}$ of $N$ for which we can analyze 
$\rm{H}^1(\bar{N}, M)^Q$, and thus determine a subspace of ${\rm Ker}(d_2)$.    
In Section~\ref{Sabelian}, 
we determine the subspaces of ${\rm Ker}(d_2)$ arising from  
ray class, cyclotomic, and Kummer extensions of $K$; the latter two are trivial unless $p=3$.

In Section \ref{Supperbound}, when $p=3$, we perform an extensive MAGMA calculation to determine 
$N$, $G$, $\omega$, and ${\rm Ker}(d_2)$; in particular, we determine the dimension of ${\rm Ker}(d_2)$ in Corollary \ref{Cfinish3}.
The results in Sections \ref{StransFermat}, \ref{Sheisenbergbig} and \ref{Supperbound} are possible because 
we have explicit knowledge about the action of $Q={\rm Gal}(L/K)$
on $M$ from \cite{WINF:Baction}.

\subsection{Motivation} \label{Smotivation}

The motivation to study the Galois cohomology group $\rm{H}^1(G, \rm{H}_1(X; \mathbb{Z}/p \mathbb{Z}))$ arises from the Kummer map. 
Let $b=[0:1:0]$ be a base point of $X$.
Let $\pi=\pi_1(X_{\bar{K}}, b)$ denote the geometric \'etale fundamental group of $X$ based at $b$.
Consider the lower central series:
\[\pi = [\pi]_1 \supseteq [\pi]_2 \supseteq \ldots \supseteq  [\pi]_n \supseteq \ldots .\] 

Let $G_K$ be the absolute Galois group of $K$. 
For a $K$-rational point $\eta$ of $X$, let $\gamma$ be a path in $X({\mathbb C})$
from $b$ to $\eta$.  The Kummer map 
\begin{equation} \label{Ekummer}
\kappa: X(K) \to \rm{H}^1(G_K, \pi_1(X))
\end{equation} is defined
by $\kappa(\eta) = [\sigma \mapsto \gamma^{-1} \sigma(\gamma)]$ for $\sigma \in G_K$.

The \'etale homology $\rm{H}_1(X)=\pi/[\pi]_2$ is the maximal abelian quotient of $\pi$.
From \eqref{Ekummer}, we obtain a map $\kappa: X(K) \to \rm{H}^1(G_K, \rm{H}_1(X) \otimes \mathbb{Z}_p)$, which is injective.
Let $G_{K,S}$ be the Galois group of the maximal pro-$p$ extension of $K$ ramified only over 
$S=\left\{ \nu \right\}$ where $\nu=\langle 1 - \zeta_p \rangle$.
Since the Fermat curve $X$ has good reduction away from $p$ and $K$ has no infinite primes,
$\kappa$ factors through $\kappa: X(K) \to \rm{H}^1(G_{K,S}, \rm{H}_1(X) \otimes \mathbb{Z}_p)$.

Using work of Schmidt and Wingberg \cite{SW92}, 
Ellenberg \cite{Ellenberg_2_nil_quot_pi} defines a series of obstructions 
to a $K$-rational point of the Jacobian of $X$ lying in the image of the Abel-Jacobi map. 
Namely, via the Kummer map, $X(K)$ and ${\rm Jac}(X)(K)$ can be viewed as subsets of 
$\rm{H}^1(G_{K,S}, \rm{H}_1(X)\otimes \mathbb{Z}_p)$. 
Let $\delta_2$ denote the first of these obstructions; it was also studied by Zarkhin \cite{zarhin}.
By \cite[Proposition~3.2]{SW92}, the map $\delta_2$ also factors through $G_{K,S}$
and has the form
\begin{equation} \label{Edelta}
\delta_2:  \rm{H}^1(G_{K,S}, \rm{H}_1(X)\otimes \mathbb{Z}_p) \to \rm{H}^2(G_{K,S}, ([\pi]_2/[\pi]_3) \otimes \mathbb{Z}_p);
\end{equation}
it is the coboundary map associated to the $p$-part of the exact sequence 
\[ 1 \to [\pi]_2/[\pi]_3  \to \pi/[\pi]_3 \to \pi/[\pi]_2 \to 1.\]

Thus, a complete understanding of $\rm{H}^1(G_{K,S}, \rm{H}_1(X) \otimes \mathbb{Z}_p)$, together with the map $\delta_2$,
provides important information about $K$-rational points on $X$.
In this paper, by taking the homology with coefficients in $\mathbb{Z}/p \mathbb{Z}$, we consider the ``first level'' of the cohomology. 
In Section \ref{Skummerobstruction}, we show that we can replace $G_{K,S}$ by $G$ for this first level 
and we use a spectral sequence to produce the exact sequence \eqref{bigexact}.

The goal of this paper is to analyze the quotient ${\rm Ker}(d_2)$ of $\rm{H}^1(G, \rm{H}_1(X; \mathbb{Z}/p\mathbb{Z}))$.
This material will be needed in future work, where 
we analyze the kernel $\rm{H}^1(Q, M)$ of \eqref{bigexact} and compute the obstruction map $\delta_2$.

\section{Background}

\subsection{Field theory} \label{Sfield}

Let $p$ be an odd prime and let $r=(p-1)/2$.
Let $\zeta_p$ be a primitive $p$th root of unity.  
Let $K=\mathbb{Q}(\zeta_p)$.
By \cite[Lemmas 1.3, 1.4]{washingtonbook}, 
$K$ is ramified only above $p$ and $\langle p \rangle =\nu^{p-1}$ 
where $\nu = \langle 1 - \zeta_p \rangle$ is the unique prime above $p$; also 
$\nu = \langle 1 - \zeta_p^i \rangle$ for $1 \leq i \leq p-1$. 

Let $L$ be the splitting field of $1-(1-x^p)^p$.  Note that $\zeta_p \in L$.
Also $L$ contains the $p$th roots of $t_0=\zeta_{p}$ and 
$t_i= 1-\zeta_p^{-i}$ for $1 \leq i \leq r$.  Let $K_0=K(\zeta_{p^2})$ and $K_i=K(\sqrt[p]{t_i})$.
By \cite[Lemma 3.3]{WINF:birs}, $L$ is the compositum of $K_0$ and $K_i$ for $1 \leq i \leq r$.
By \cite[Lemma 3.3]{WINF:birs}, $L/K$ is only ramified at $\nu$.  
So $L$ is contained in the maximal elementary abelian $p$-extension of $K$ ramified only over the prime above $p$.

\subsection{Galois groups} \label{Sgalois}

Let $Q={\rm Gal}(L/K)$.
We assume throughout that $p$ satisfies Vandiver's Conjecture, namely that $p$ does not divide the order $h^+$ of the class group of $\mathbb{Q}(\zeta_p+\zeta_p^{-1})$; 
this is true for all $p$ less than 163 million and all regular primes. 
By \cite[Proposition 3.6]{WINF:birs}, this implies that $K_0, \ldots, K_r$ are linearly disjoint over $K$ 
and so $Q \simeq (\mathbb{Z}/p \mathbb{Z})^{r+1}$, where $r=(p-1)/2$. 
In particular, the degree $d={\rm deg}(L/\mathbb{Q})$ satisfies $d=(p-1)p^{r+1}$.  

Note that $Q \simeq \times_{i=0}^r {\rm Gal}(K_i/K)$.  
We choose an explicit basis $\{\tau_0, \ldots, \tau_r\}$ for $Q$ as follows. 
For $0 \leq i \leq r$, let $\tau_i \in Q$ be such that $\tau_i(\sqrt[p]{t_i})=\zeta_p \sqrt[p]{t_i}$ and $\tau_i(\sqrt[p]{t_j}) = \sqrt[p]{t_j}$ for $i \not = j$.
Let $\tau_i$ also denote the image of $\tau_i$ in ${\rm Gal}(K_i/K)$.

Let $G_K$ (resp.\ $G_L$) denote the absolute Galois group of $K$ (resp.\ $L$).

\subsection{A 2-nilpotent extension of $K$} \label{Snil}

Let $E$ be the maximal elementary abelian $p$-group extension of $L$ which is ramified only above $p$.
Then $E/K$ is Galois.
Let $N={\rm Gal}(E/L)$ and $G={\rm Gal}(E/K)$.
By definition, $N$ is an elementary abelian $p$-group.
As in \eqref{shortexact}, there is a short exact sequence  
$1 \to N \to  G \to Q \to 1$.

\subsection{Class groups}

Let $\mathop{\rm Cl}(L)$ denote the class group of $L$.
Let $\mathop{\rm Cl}^S(L)$ be the quotient of $\mathop{\rm Cl}(L)$ by the subgroup of classes of ideals generated by the primes of a set $S$.
Let $\mathop{\rm Cl}(L)[p]$ and $\mathop{\rm Cl}^S(L)[p]$ denote the $p$-Sylow subgroups of these.

For a number field $F$, let $r_2(F)$ denote the number of complex places of $F$.
Let $G_{F, p}$ be the Galois group of the maximal pro-$p$ extension of $F$ ramified only above the primes above $p$,
see \cite[Section 11.1]{Koch}.  
Since $L$ is totally complex, $r_2(L)=d/2$.
This implies that no infinite places are ramified in finite extensions of $L$ and that restricted and ordinary 
class groups are equal.

There is a short exact sequence 
\begin{equation} \label{Eexactresramlarge}
1 \to G_{L, p} \to G_{K, p} \to Q \to 1.
\end{equation}

For a finitely generated $p$-group $\Gamma$, let $\Phi(\Gamma)$ denote its Frattini subgroup, 
namely the closed characteristic subgroup of $\Gamma$ topologically generated by $p$th powers and commutators. 
We write $\Phi(\Gamma)=\Gamma^p[\Gamma,\Gamma]$.
The Frattini quotient $\Gamma'=\Gamma/\Phi(\Gamma)$ is an elementary abelian $p$-group.
By Burnside's basis theorem, $\dim_{\mathbb{F}_p}(\Gamma') = \dim_{\mathbb{F}_p}(\Gamma)$. 

By definition, $N$ is the Frattini quotient $G_{L,p}'$ of $G_{L, p}$.
The group $G$ is the pushout of $G_{K,p}$ and $N$ with respect to the inclusion $G_{L,p} \to G_{K,p}$ 
and the quotient map $G_{L, p} \to N$.

\subsection{The Fermat curve} \label{SFermat}

The Fermat curve of exponent $p$ is the smooth projective curve $X \subset {\mathbb P}^2$ given by the 
equation $x^p+y^p=z^p$.  The open affine $U \subset X$ given by $z \not = 0$ has affine equation $x^p+y^p=1$.
Let $Y \subset U$ denote the closed subscheme of $2p$ points with $xy=0$. 

The curve $X$ has good reduction away from $p$.
Thus $U/K$ has good reduction except at $\nu$. 

The group $\mu_p \times \mu_p$ acts on $X$, and this action stabilizes $U$ and $Y$.
Let $\epsilon_0, \epsilon_1$ be the generators of $\mu_p \times \mu_p$ which act by
$\epsilon_0(x,y) = (\zeta_p x, y)$ and $\epsilon_1(x,y) = (x, \zeta_p y)$.
Consider the group ring $\Lambda_1= (\mathbb{Z}/p \mathbb{Z})[\mu_p \times \mu_p]$.

Let $y_i=\epsilon_i-1$.  Then $y_i^p=0$ and $\Lambda_1 \simeq (\mathbb{Z}/p \mathbb{Z})[y_0, y_1]/\langle y_0^p, y_1^p \rangle$.
Consider the augmentation ideal $\langle (1-\epsilon_0)(1-\epsilon_1) \rangle = \langle y_0y_1 \rangle$ of $\Lambda_1$.

\subsection{Homology} \label{Shomology}

We consider \'etale homology groups with coefficients in $\mathbb{Z}/p \mathbb{Z}$.
Let $M=\rm{H}_1(U, Y)=\rm{H}_1(U, Y; \mathbb{Z}/p \mathbb{Z})$ denote the homology of $U$ relative to $Y$.

By \cite[Theorem 6]{Anderson}, $M$ is a free rank one $\Lambda_1$-module, with generator denoted $\beta$.
Under the identification of $M$ with $\Lambda_1$, the homology $\rm{H}_1(U) = \rm{H}_1(U; \mathbb{Z}/p \mathbb{Z})$ identifies with the augmentation ideal 
$\langle y_0 y_1 \rangle \subset \Lambda_1$ \cite[Proposition 6.2]{WINF:birs}.
Furthermore, $\rm{H}_1(X) = \rm{H}_1(X; \mathbb{Z}/p \mathbb{Z})$ is the quotient of $\rm{H}_1(U)$ by ${\rm Stab}(\epsilon_0 \epsilon_1)$ \cite[Proposition 6.3]{WINF:birs}.

In addition, $M$ is a $p$-torsion $G_K$-module.
By \cite[Section 10.5]{Anderson}, the action of $G_K$ on $M$ factors through $L$. 
This implies that $G_L$ and $N$ act trivially on $M$.
This means that the action of $G_K$ on $M$ is determined by the action of $Q={\rm Gal}(L/K)$ on $M$.
Write $\beta$ for the chosen generator of $M$
and let $B_{i} =B_{\tau_i} \in \Lambda_1$ denote the element such that $\tau_i \cdot \beta= B_{i} \beta$.
By \cite[9.6, 10.5.2]{Anderson}, $B_i-1 \in \langle y_0y_1 \rangle$. 

\subsection{The transgression map} \label{Skummerobstruction}

Let $M$ be a $p$-torsion $G_{K,p}$-module. 
From \eqref{Eexactresramlarge} 
and the Lyndon-Hochschild-Serre spectral sequence 
\[\rm{H}^i(Q, \rm{H}^j(G_{L,p} , M)) \Rightarrow \rm{H}^{i+j} (G_{K,p} , M),\] 
we obtain the exact sequence 
\begin{equation} \label{inflationrestriction}
0 \to \rm{H}^1(Q, M^{G_{L,p}})  \to \rm{H}^1 (G_{K,p} , M) \to  \rm{H}^1(G_{L,p}, M)^{Q} \stackrel{d_2}{\rightarrow} \rm{H}^2(Q, M^{G_{L,p}}).
\end{equation}

Here $d_2$ is called the transgression map.
Suppose $G_{L,p}$ acts trivially on $M$, 
then $\rm{H}^i(Q, M^{G_{L,p}})=\rm{H}^i(Q, M)$ for $i=1,2$.

Since $N$ is the Frattini quotient of $G_{L,p}$ and since 
$M$ is a finite dimensional vector space over $\mathbb{F}_p$, 
there is an isomorphism $\rm{H}^1(G_{L,p}, M)^{Q} \simeq \rm{H}^1(N, M)^Q$.
Since $G$ is a quotient of $G_{K,p}$, there is an injection 
$\rm{H}^1(G, M) \to \rm{H}^1(G_{K,p}, M)$.
By the short five lemma, $\rm{H}^1 (G_{K,p} , M) \simeq \rm{H}^1 (G, M)$.
With some abuse of notation, we write $d_2: \rm{H}^1(N, M)^Q \to \rm{H}^2(Q, M)$.
Then there is an exact sequence, as in \eqref{bigexact},
\[0 \to \rm{H}^1(Q, M) \to \rm{H}^1(G, M) \to {\rm Ker}(d_2) \to 0.\]

Since $N$ acts trivially on $M$, an element $\phi \in \rm{H}^1(N,M)^Q$ is uniquely determined by
 a $Q$-invariant homomorphism $\phi:N \to M$.
To compute $d_2$, we consider a $2$-cocycle $\tilde{\omega}: Q \times Q \to N$ for the element $\omega \in \rm{H}^2(Q, N)$ classifying
the extension \eqref{shortexact}.
By \cite[1.6.6, 2.4.3]{Neukirch-Schmidt-Wingberg} and \cite[3.7 (3.9) and (3.10)]{Koch} (see \cite[Proposition 6.1]{WINF:Baction}), 
$d_2(\phi) = - \phi \circ \tilde{\omega}$. 

\section{Ray class fields and the classifying element} \label{Srayfield}

Suppose that $p$ is an odd prime satisfying Vandiver's Conjecture.
From Sections \ref{Sfield} and \ref{Sgalois}, 
recall that $K=\mathbb{Q}(\zeta_p)$, $L$ is the splitting field of the polynomial $1-(1-x^p)^p$,
and $Q={\rm Gal}(L/K)$ is an elementary abelian $p$-group of rank $r+1$, where $r= (p-1)/2$.

From Section \ref{Snil}, $E$ is the maximal elementary abelian $p$-group extension of $L$, 
ramified only above $p$.
Let $G={\rm Gal}(E/K)$ and $N={\rm Gal}(E/L)$.
There is an exact sequence:
\begin{equation} \label{Egaloisexact} 
1 \to N \to G \to Q \to 1.
\end{equation}

In Sections \ref{Sregular}, \ref{ScompareST}, and \ref{Srayclass}, under the condition that $p$ is regular, 
we use class field theory to give results on $N$ and its connection with ray class fields.
Section \ref{Sclass} contains the material needed to classify the extension \eqref{Egaloisexact}.
If $M$ is a $G$-module on which $N$ acts trivially, in Section \ref{Salgebraker}, we give an algebraic description of  
the kernel of the transgression map $d_2:\rm{H}^1(N, M)^Q \to \rm{H}^2(Q, M)$.
We specialize this to the case that $M$ is the relative homology of the Fermat curve 
in Section \ref{StransFermat}.

\subsection{The rank of $N$ when $p$ is regular} \label{Sregular}

Using the topic of $p$-rationality, 
we determine more information about $L$ and $N$ when $p$ is a regular prime.
A good reference for $p$-rationality is \cite{MovDo} or \cite[IV, Section 3]{Grasbook}.
A number field $M$ is $p$-rational when $G_{M,p}$ is a free pro-$p$ group of rank $1+r_2(M)$.
See other equivalent definitions in \cite[IV, Remark~3.4.5, Theorem~3.5]{Grasbook}. 

\begin{proposition} \label{Prational}
If $p$ is a regular prime, then $L$ is $p$-rational and ${\rm dim}_{{\mathbb F}_p}(N) = 1 + d/2$.
Also, there is a unique prime $\mathfrak{p}$ of $L$ above $p$ and
$\mathop{\rm Cl}^{\{p\}}(L)[p]$ is trivial.
\end{proposition}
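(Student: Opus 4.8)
The plan is to deduce everything from the claim that $L$ is $p$-rational, since once $G_{L,p}$ is known to be free pro-$p$ of rank $1 + r_2(L) = 1 + d/2$, Burnside's basis theorem gives $\dim_{\mathbb{F}_p}(N) = \dim_{\mathbb{F}_p}(G_{L,p}') = 1 + d/2$ immediately, and the structure theory of $p$-rational fields (as in \cite[IV, Theorem~3.5]{Grasbook}) gives both the triviality of $\mathop{\rm Cl}^{\{p\}}(L)[p]$ and the statement that there is a unique prime $\mathfrak{p}$ above $p$. So the real content is: \emph{$p$ regular $\Rightarrow$ $L$ is $p$-rational}. First I would recall the inductive/permanence properties of $p$-rationality: $\mathbb{Q}$ is $p$-rational for every $p$, and $K = \mathbb{Q}(\zeta_p)$ is $p$-rational precisely when $p$ is regular (this is classical --- regularity is equivalent to $\mathop{\rm Cl}(K)[p]$ trivial, and combined with the fact that $p$ is totally ramified in $K$ with a unique prime above it, $p$-rationality of $K$ follows; see \cite[IV]{Grasbook} or \cite{MovDo}). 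The issue is then to propagate $p$-rationality up the tower $K \subset L$.

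The key tool is a criterion for when a $p$-extension $L/K$ of a $p$-rational field remains $p$-rational. Recall from Section~\ref{Sfield} that $L$ is the compositum of $K_0 = K(\zeta_{p^2})$ and the $K_i = K(\sqrt[p]{t_i})$ for $1 \le i \le r$, where $t_i = 1 - \zeta_p^{-i}$; each $K_i/K$ is a degree-$p$ Kummer (or for $i=0$, cyclotomic) extension ramified only at $\nu$, and by \cite[Proposition~3.6]{WINF:birs} (Vandiver) these are linearly disjoint, so $\mathrm{Gal}(L/K) \simeq (\mathbb{Z}/p)^{r+1}$ and $d = [L:\mathbb{Q}] = (p-1)p^{r+1}$. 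I would build $L$ from $K$ by adjoining one $K_i$ at a time and apply, at each stage, the ``abelian $p$-rational tower'' results of Gras --- e.g.\ a $\mathbb{Z}/p$-extension $F'/F$ unramified outside $p$ with $F$ $p$-rational is again $p$-rational provided a certain ramification/splitting condition holds at the prime(s) above $p$ (no ``hidden'' splitting that would create new $p$-torsion in the relevant class group). The elements $t_i$ are $p$-units in $K$ whose associated Kummer extensions are ramified at $\nu$, which is exactly the configuration in which this preservation result applies; tracking that there remains a \emph{unique} prime above $p$ at each stage (because $\nu$ is totally ramified, being the unique prime above $p$ in $K$ and ramifying in each $K_i$) is what keeps the hypothesis of the preservation lemma satisfied.

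An alternative, perhaps cleaner, route is to bound $\dim_{\mathbb{F}_p}(N)$ from both sides directly. For the lower bound, use that a minimal generating set of $G_{L,p}$ has size $1 + r_2(L) + \dim_{\mathbb{F}_p} \mathrm{Cl}^{\{p\}}(L)[p] - (\text{correction terms})$ via the standard formula for the generator rank of $G_{F,S}$ from \cite[Chapter~11]{Koch}/class field theory, so $\dim_{\mathbb{F}_p}(N) \ge 1 + d/2$ always, with equality iff $\mathrm{Cl}^{\{p\}}(L)[p]$ vanishes and the relation rank is zero, i.e.\ iff $L$ is $p$-rational. For the upper bound (the triviality of $\mathrm{Cl}^{\{p\}}(L)[p]$), I would use Kummer theory / reflection: since $K$ is $p$-rational (as $p$ is regular), $\mathrm{Cl}(K)[p] = 0$ and the only $\mathbb{Z}_p$-extension of $K$ ramified only at $p$ is the cyclotomic one, and the $p$-units of $K$ generating $L/K$ are precisely those predicted by the $p$-rational structure of $G_{K,p}$ --- so $L$ is contained in the fixed field of $\Phi(G_{K,p})$, and no new $p$-class group arises. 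I expect the \textbf{main obstacle} to be the second route's bookkeeping --- correctly invoking and combining the generator-rank and relation-rank formulas for $G_{L,S}$ with $S = \{\mathfrak{p}\}$, and verifying the vanishing of the relevant reflection/Kummer term --- or, on the first route, precisely citing the form of Gras's preservation theorem that applies and checking its hypothesis (the unique-prime and ramification condition) survives each step of the compositum. Either way the crux is purely a statement about $p$-rationality of $L$; the three assertions of the proposition then follow formally.
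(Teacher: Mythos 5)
Your primary route is essentially the paper's: $p$ regular implies $K$ is $p$-rational (Movahhedi), $p$-rationality is propagated from $K$ to $L$, and then $\dim_{\mathbb{F}_p}(N)=1+r_2(L)=1+d/2$ and the remaining claims are read off from the structure/characterization of $p$-rational fields containing $\mu_p$. The difference is in how the propagation is handled, and this is where your sketch has a soft spot. The paper's propagation step is a single citation: since $L/K$ is a Galois $p$-extension unramified outside $p$, it is automatically $p$-primitively ramified, so Movahhedi's theorem applies at once --- no stage-by-stage climb through the compositum and no auxiliary ``unique prime / no hidden splitting'' hypothesis is needed. This matters because the one place where you supply an actual argument for such a hypothesis is not valid as written: total ramification of $\nu$ in each degree-$p$ layer $K_i/K$ does not imply that the compositum $L$ has a unique (or totally ramified) prime above $p$; the inertia group at a prime of $L$ above $\nu$ merely surjects onto each ${\rm Gal}(K_i/K)$ and need not be all of $Q$. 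In the paper, the uniqueness of $\mathfrak{p}$ and the triviality of $\Cl^{\{p\}}(L)[p]$ are \emph{consequences} of the $p$-rationality of $L$ (via the characterization for fields containing $\mu_p$, as in Movahhedi and Gras--Jaulent), not inputs to the propagation --- which is also how your opening paragraph uses them, so your overall logic goes through once the propagation is done by the unconditional statement for $p$-ramified $p$-extensions. Your alternative route via generator/relation ranks and reflection is not what the paper does, and as you yourself note the bookkeeping there is the real obstacle; the paper avoids it entirely.
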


(Recall that $\mathop{\rm Cl}^{\{p\}}(L)[p]$ is the $p$-Sylow subgroup of the quotient $\mathop{\rm Cl}^{\{p\}}(L)$ of $\mathop{\rm Cl}(L)$ 
by the subgroup of classes of ideals generated by $\mathfrak{p}$.)

\begin{proof}
If $p$ is a regular prime, then $K$ is $p$-rational by \cite[Proposition 3, Example page 166]{Movthesis}.
Since $L/K$ is a Galois $p$-extension unramified outside $p$, $L/K$ is $p$-primitively ramified, 
e.g., \cite[page 330]{GrasK2}.
Then $L$ is $p$-rational by \cite[Theorem 3]{Movthesis}.  
Since $L$ is p-rational, 
then $G_{L,p}$ is a free pro-$p$ group of rank $1+r_2(L)$ where $r_2(L)=d/2$.
Thus $N \simeq (\mathbb{Z}/p \mathbb{Z})^{1+d/2}$.
The other claims follow from \cite[Proposition 3]{Movthesis} or \cite[Theorem~4.1(iva)]{GrasJaulent}. 
\end{proof}

We remark that it might be possible to extend the results to the case that $p$ satisfies 
Vandiver's conjecture, using the techniques of \cite{GrasVandiver, GrasIncomplete}.

\subsection{Local and global units} \label{ScompareST}

Let $\mathcal{O}_L$ denote the maximal order of $L$. 
Let $\mathcal{O}_L^{\times}/p = \mathcal{O}_L^{\times}/( \mathcal{O}_L^{\times})^p$
denote the global units modulo $p$. 

By Proposition \ref{Prational}, if $p$ is a regular prime, then 
there is a unique prime $\mathfrak{p}$ of $L$ above $p$.

\begin{corollary} \label{rrbound}
If $p$ is a regular prime, 
then the $p$-rank of $(\mathcal{O}_L/\mathfrak{p}^{n+1})^{\times}$ is $d+1$ if $n \geq p^{r+2}$.
\end{corollary}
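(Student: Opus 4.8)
The plan is to compute the $p$-rank of the unit group $(\mathcal{O}_L/\mathfrak{p}^{n+1})^{\times}$ directly from the structure of the local ring at the unique prime $\mathfrak{p}$ above $p$, which exists by Proposition \ref{Prational}. Let $e = e(\mathfrak{p}/p)$ be the ramification index and $f = f(\mathfrak{p}/p)$ the residue degree, so $ef = d = [L:\mathbb{Q}]$; since $L$ is totally ramified over $K$ at $\nu$ by Section \ref{Sfield} and $K/\mathbb{Q}$ has $e = p-1$, while $[L:K] = p^{r+1}$, one sorts out the exact values of $e$ and $f$ (in fact $e = (p-1)p^{r+1}$ and $f=1$, i.e.\ $\mathfrak{p}$ is totally ramified over $\mathbb{Q}$, because all the defining radicals generate ramified extensions). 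With that in hand, the residue field is $\mathbb{F}_p$, and the standard filtration of the unit group by the subgroups $U^{(m)} = 1 + \mathfrak{p}^m$ gives $(\mathcal{O}_L/\mathfrak{p}^{n+1})^{\times} \cong \mathbb{F}_p^{\times} \times U^{(1)}/U^{(n+1)}$, where the first factor is prime-to-$p$ and the second is a finite abelian $p$-group.

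First I would recall the classical description (e.g.\ from Hasse's \emph{Zahlentheorie} or Neukirch's book, or \cite[Chapter II]{washingtonbook}-style local computations) of the $\mathbb{Z}_p$-module structure of the local units $1 + \pi\mathcal{O}$ in a totally ramified extension of $\mathbb{Q}_p$ of degree $e$: it is the product of a finite cyclic $p$-group (the roots of unity of $p$-power order, here $\mu_{p^?}$ — at least $\mu_p$ since $\zeta_p \in L$) and a free $\mathbb{Z}_p$-module of rank $e$. Reducing modulo $\mathfrak{p}^{n+1}$, the $p$-rank of $U^{(1)}/U^{(n+1)}$ stabilizes once $n$ is large enough that every one of the $\mathbb{Z}_p$-generators has "room" to contribute a full $\mathbb{Z}/p$ to the Frattini quotient; concretely, the $p$-rank becomes $e$ plus a contribution of $1$ from the torsion part, giving $e + 1 = d + 1$ (using $f = 1$). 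The point of the bound $n \geq p^{r+2}$ is simply to guarantee this stabilization: the "jumps" in the unit filtration all occur below that level, and $p^{r+2}$ comfortably exceeds the largest index at which the $p$-power map on $U^{(1)}$ shifts valuation (roughly $e/(p-1) = p^{r+1}$, plus the torsion jump near the same range).

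The main obstacle I anticipate is pinning down the \emph{exact} threshold and the exact $p$-rank, as opposed to an asymptotic statement — i.e.\ verifying that $n \geq p^{r+2}$ is genuinely sufficient and that the stable $p$-rank is exactly $d+1$ rather than $d$ or $d+2$. This requires tracking the torsion subgroup $\mu_{p^\infty}(L)$ precisely (one must confirm the exact power of $p$ dividing $|\mu(L)|$, which follows from $K_0 = K(\zeta_{p^2}) \subseteq L$ together with the fact that $L$ has exponent-$p$ Kummer type over $L \cap \mathbb{Q}(\zeta_{p^\infty})$, so $\zeta_{p^2} \in L$ but $\zeta_{p^3} \notin L$ for the relevant $p$), and then checking that its Frattini contribution adds exactly one to the $p$-rank coming from the free part. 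I would handle this by writing $U^{(1)}/U^{(n+1)}$ via its elementary divisors, counting invariant factors divisible by $p$: the free rank-$e$ part contributes $e$, the $\mathbb{Z}/p^a$ torsion (with $a \geq 2$) contributes $1$, and there is no overlap once $n+1 > e a/(p-1)$ or so, which $p^{r+2}$ handles. A clean alternative, if the elementary-divisor bookkeeping gets unwieldy, is to use the logarithm $\log: U^{(m)} \to \mathfrak{p}^m$ for $m$ large (where it is an isomorphism of $\mathbb{Z}_p$-modules onto a lattice), reducing the $p$-rank computation to a lattice-index computation modulo $\mathfrak{p}^{n+1}$, which transparently gives rank $e$ from the free part plus the torsion correction.
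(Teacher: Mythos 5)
Your route is genuinely different from the paper's: the paper's proof is essentially a citation plus arithmetic --- it invokes a theorem of Nakagoshi (or Seng\"un) stating that, since $\zeta_p \in L$, the $p$-rank of $(\mathcal{O}_L/\mathfrak{p}^{n+1})^{\times}$ equals $ef+1$ once $n \geq e+e_1$ with $e_1=\lfloor e/(p-1)\rfloor$, and then computes $e=d=(p-1)p^{r+1}$, $f=1$, $e_1=p^{r+1}$, so that $e+e_1=p^{r+2}$. You instead propose to reprove that input from scratch via the structure of principal units in the completion $L_{\mathfrak{p}}/\mathbb{Q}_p$ (totally ramified of degree $d$, residue field $\mathbb{F}_p$): $U^{(1)} \cong \mu_{p^a}\times \mathbb{Z}_p^{d}$, so the stable $p$-rank of $U^{(1)}/U^{(n+1)}$ is $d+1$, the extra $1$ coming from the cyclic torsion. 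That skeleton is sound, and it buys self-containedness at the cost of the filtration bookkeeping the paper outsources. One quantitative point in your sketch needs correcting, though, and it is exactly the point your anticipated ``obstacle'' paragraph gestures at: the stabilization index is not ``roughly $e/(p-1)=p^{r+1}$, comfortably exceeded by $p^{r+2}$.'' Since $(U^{(m)})^p = U^{(m+e)}$ only for $m > e/(p-1)$, one gets $U^{(n+1)} \subseteq (U^{(1)})^p$ precisely when $n \geq e + e_1 = (p-1)p^{r+1}+p^{r+1} = p^{r+2}$; the corollary's hypothesis is the exact threshold furnished by the cited theorem, not a generous overshoot. Also note that whether $\zeta_{p^3}\in L$ is irrelevant: the torsion part is cyclic, so it contributes exactly one to the $p$-rank for any $a\geq 1$; what must be checked is only that its image survives in the Frattini quotient for $n$ in the stated range, which the same filtration estimate (or your logarithm variant, with the torsion treated separately since $\log$ kills it) delivers.
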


\begin{proof}
Let $e$ (resp.\ $f$) denote the ramification index (resp.\ residue degree) of $\mathfrak{p}$ over $\mathbb{Q}$.
Set $e_1= \lfloor e/(p-1) \rfloor$. 
By \cite[Theorem 1, page 45]{Nakagoshi} or \cite[Theorem 1, page 31]{Sengun}, since $\zeta_p \in L$,
the $p$-rank of $(\mathcal{O}_L/\mathfrak{p}^{n+1})^{\times}$ is $ef+1$ if $n \geq e+e_1$.
The result follows since $e=d =(p-1)p^{r+1}$, $f=1$, and $e_1=p^{r+1}$. 
\end{proof}  

Let $\mathcal{O}_{\mathfrak{p}}^{\times}/p = \mathcal{O}_{\mathfrak{p}}^{\times}/( \mathcal{O}_{\mathfrak{p}}^{\times})^p$ denote the local units modulo $p$.
Both $\mathcal{O}_L^{\times}/p$ and $\mathcal{O}_{\mathfrak{p}}^{\times}/p$ are $\mathbb{F}_p$-vector spaces 
with an action of $Q={\rm Gal}(L/K)$.
Let $*$ denote the dual. 

\begin{proposition} \label{PclassGL}
If $\mathop{\rm Cl}(L)[p]$ is trivial, then there is a $Q$-invariant short exact sequence: 
\begin{equation} \label{Elocalglobal}
0 \to \rm{H}^1(N, \mathbb{F}_p) \to (\mathcal{O}_{\mathfrak{p}}^{\times}/p)^* \stackrel{\varphi_2^*}{\to} (\mathcal{O}_L^{\times}/p)^{*} \to 1.
\end{equation}
\end{proposition}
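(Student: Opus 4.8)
The plan is to derive the sequence \eqref{Elocalglobal} from Poitou--Tate duality (or the associated global-to-local cohomology sequence) for the trivial module $\mathbb{F}_p$ over the number field $L$, together with the hypothesis $\mathop{\rm Cl}(L)[p]=0$. First I would recall the description of $G_{L,p}=\Gal(L_{\{p\}}/L)$, the Galois group of the maximal pro-$p$ extension of $L$ unramified outside the unique prime $\mathfrak{p}$ above $p$. Class field theory (as in \cite[Section 11]{Koch} or \cite[Chapter X]{Neukirch-Schmidt-Wingberg}) gives an exact sequence relating $\rm{H}^1(G_{L,p},\mathbb{F}_p)=\mathop{\rm Hom}(G_{L,p},\mathbb{F}_p)$ to the local units at $\mathfrak{p}$ modulo $p$-th powers and to the global units modulo $p$-th powers, the connecting term being controlled by $\mathop{\rm Cl}^{\{p\}}(L)[p]$ and $\mathop{\rm Cl}(L)[p]$. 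Concretely, the idelic/unit description of the Frattini quotient $G_{L,p}'$ — recalling from the excerpt that $N=G_{L,p}'$ — is
\[
N \;=\; \ker\Bigl( \mathcal{O}_{\mathfrak{p}}^{\times}/p \longrightarrow (\text{class-group term}) \Bigr)\big/ \overline{\mathcal{O}_L^{\times}},
\]
and when $\mathop{\rm Cl}(L)[p]$ vanishes the class-group obstruction disappears, so that one gets an exact sequence of $\mathbb{F}_p[Q]$-modules
\[
0 \to \mathcal{O}_L^{\times}/p \to \mathcal{O}_{\mathfrak{p}}^{\times}/p \to N \to 0.
\]
Here the first map is induced by the diagonal embedding of global units into local units at $\mathfrak{p}$ (injective because $L$ has a unique prime over $p$, so there is no other place to ``lose'' units to, and $\mathop{\rm Cl}(L)[p]=0$ removes the potential kernel coming from principal ideals that become $p$-th powers), and surjectivity onto $N$ is exactly the statement that every degree-$p$ extension unramified outside $\mathfrak{p}$ is detected by a local unit, again using triviality of $\mathop{\rm Cl}(L)[p]$ (equivalently $\mathop{\rm Cl}^{\{p\}}(L)[p]=0$ by Proposition~\ref{Prational}).

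Next I would dualize. Since all three terms are finite-dimensional $\mathbb{F}_p$-vector spaces with a $Q$-action, applying $\mathop{\rm Hom}_{\mathbb{F}_p}(-,\mathbb{F}_p)$ is exact and reverses arrows, and it is compatible with the $Q$-action (the induced $Q$-action on duals is the contragredient, but the statement only asserts $Q$-equivariance of the sequence, which is preserved). This turns the displayed sequence into
\[
0 \to N^{*} \to (\mathcal{O}_{\mathfrak{p}}^{\times}/p)^{*} \xrightarrow{\ \varphi_2^{*}\ } (\mathcal{O}_L^{\times}/p)^{*} \to 0.
\]
Finally I would identify $N^{*}$ with $\rm{H}^1(N,\mathbb{F}_p)$: since $N$ acts trivially on $\mathbb{F}_p$ and $N$ is a finite abelian $p$-group, $\rm{H}^1(N,\mathbb{F}_p)=\mathop{\rm Hom}(N,\mathbb{F}_p)=N^{*}$ canonically, and this identification is $Q$-equivariant. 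Substituting gives precisely \eqref{Elocalglobal}, with $\varphi_2^{*}$ the dual of the global-to-local reduction map $\varphi_2:\mathcal{O}_L^{\times}/p \to \mathcal{O}_{\mathfrak{p}}^{\times}/p$ (notation presumably fixed just before this proposition in the paper).

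I expect the main obstacle to be pinning down the first exact sequence above with the correct hypotheses and checking $Q$-equivariance of each arrow. In particular, one must be careful that the injectivity of $\mathcal{O}_L^{\times}/p \hookrightarrow \mathcal{O}_{\mathfrak{p}}^{\times}/p$ genuinely uses $\mathop{\rm Cl}(L)[p]=0$: a global unit that is a $p$-th power locally at $\mathfrak{p}$ need not be a global $p$-th power in general, and ruling this out amounts to the statement that $\mathop{\rm Cl}(L)[p]=0$ (via Kummer theory, a unit that is locally a $p$-th power everywhere generates an unramified degree-$p$ extension, which is trivial exactly when $\mathop{\rm Cl}(L)[p]=0$); since $\mathfrak{p}$ is the only ramified candidate place and $L$ contains $\zeta_p$, ``locally a $p$-th power at $\mathfrak{p}$'' suffices. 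Dually, surjectivity of $\varphi_2^{*}$ is equivalent to injectivity of $\rm{H}^1(N,\mathbb{F}_p)\to (\mathcal{O}_{\mathfrak{p}}^{\times}/p)^{*}$, i.e. to the statement that a nontrivial homomorphism $N\to\mathbb{F}_p$ is detected by some local unit, which is again a reciprocity/class-field-theory input. A clean alternative is to quote directly the Poitou--Tate nine-term exact sequence for $S=\{\mathfrak{p}\}$ and $M=\mathbb{F}_p$ over $L$, extract the relevant three-term piece, and use $\mathop{\rm Cl}(L)[p]=0$ together with the uniqueness of $\mathfrak{p}$ to collapse the remaining terms; I would present whichever of these is shorter, likely citing \cite[Section 11.1]{Koch} and \cite[Chapter X]{Neukirch-Schmidt-Wingberg} for the unit-theoretic formulation.
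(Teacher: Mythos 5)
Your proposal is correct and draws on the same class-field-theoretic source as the paper, but it organizes the argument differently. The paper quotes the dualized five-term sequence of Koch (Theorem 11.7), which runs $0 \to \rm{H}^2(\mathop{\rm Cl}(L)[p],\mathbb{F}_p) \to \rm{H}^1(G_{L,p},\mathbb{F}_p) \to (\mathcal{O}_{\mathfrak{p}}^{\times}/p)^* \to \mathcal{O}_L^{\times}/p \to B_{\mathfrak{p}} \to 0$; the hypothesis $\mathop{\rm Cl}(L)[p]=0$ kills the left end, $\rm{H}^1(G_{L,p},\mathbb{F}_p)\simeq \rm{H}^1(N,\mathbb{F}_p)$ handles the identification you make via the Frattini quotient, and---this is the main difference---surjectivity of $\varphi_2^*$ (triviality of $B_{\mathfrak{p}}$) is obtained by a dimension count using Proposition \ref{Prational} ($\dim_{\mathbb{F}_p} N = 1+d/2$) together with $\dim \mathcal{O}_L^{\times}/p = d/2$ and $\dim \mathcal{O}_{\mathfrak{p}}^{\times}/p = d+1$. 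You instead work with the undualized sequence $0 \to \mathcal{O}_L^{\times}/p \to \mathcal{O}_{\mathfrak{p}}^{\times}/p \to N \to 0$ and get surjectivity of $\varphi_2^*$ from injectivity of $\varphi_2$ via the Kummer argument (a global unit that is a local $p$-th power at the unique prime above $p$ generates an everywhere-unramified degree-$p$ extension, which $\mathop{\rm Cl}(L)[p]=0$ forbids); this is a nice, more direct route that avoids the $p$-rationality/dimension input, at the cost that exactness of your undualized sequence at the middle term and at $N$ is exactly the content you would still have to extract from Koch's Section 11 or Poitou--Tate, as you acknowledge---so the external citation burden is essentially the same as the paper's. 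One small bookkeeping slip in your closing paragraph: surjectivity of $\varphi_2^*$ is dual to injectivity of $\varphi_2$, not to injectivity of $\rm{H}^1(N,\mathbb{F}_p)\to(\mathcal{O}_{\mathfrak{p}}^{\times}/p)^*$; the latter is dual to surjectivity of $\mathcal{O}_{\mathfrak{p}}^{\times}/p \to N$, which is where $\mathop{\rm Cl}(L)[p]=0$ enters through the idelic computation. Since you address both statements anyway, this does not affect the substance of the argument.
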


\begin{proof}
The hypothesis that $\mathop{\rm Cl}(L)[p]$ is trivial implies that $p$ is a regular prime.  
Let $\varphi_2^*$ be the dual to the homomorphism
$\varphi_2: \mathcal{O}_L^{\times}/p \to \mathcal{O}_{\mathfrak{p}}^{\times}/p$
induced from the inclusion $\mathcal{O}_L \to \mathcal{O}_{\mathfrak{p}}$.
By \cite[pages 114-115, Theorem 11.7]{Koch}, there is a $Q$-invariant exact sequence 
\begin{align*}  
0 \to \rm{H}^2(\mathop{\rm Cl}(L)[p], \mathbb{F}_p) \stackrel{\textrm{inf}}{\to} \rm{H}^1(G_{L, p}, \mathbb{F}_p) \to 
(\mathcal{O}_{\mathfrak{p}}^{\times}/p )^* \stackrel{\varphi_2^*}{\to}  \mathcal{O}_L^{\times}/p \to B_{\mathfrak{p}} \to 0.\end{align*} 
Since $\mathop{\rm Cl}(L)[p]$ is trivial, $\rm{H}^2(\mathop{\rm Cl}(L)[p], \mathbb{F}_p) = 0$.
Also, $\rm{H}^1(G_{L, p}, \mathbb{F}_p) \simeq \rm{H}^1(N, \mathbb{F}_p)$.
By \cite[page~120]{Koch}, ${\rm dim}_{\mathbb{F}_p}(\mathcal{O}_L^{\times}/p)=d/2$ and
${\rm dim}_{\mathbb{F}_p}(\mathcal{O}_{\mathfrak{p}}^{\times}/p)= d +1$.
By Proposition \ref{Prational}, ${\rm dim}_{{\mathbb F}_p}(N) = 1 + d/2$.
The result follows since $B_{\mathfrak{p}}$ is trivial by a dimension count.
\end{proof}

\subsection{Ray class fields} \label{Srayclass}

Let $L_{\mathbf{m}}$ (resp.\ $\mathop{\rm Cl}_{\mathbf{m}}(L)$) denote the ray class field (resp.\ group) of $L$ of modulus $\mathbf{m}$.
Every extension of $L$ has a conductor, a minimal admissible modulus, which is only divisible by the ramified primes.
Thus the field $E$ is contained in the ray class field $L_{\mathfrak{p}^i}$, for $i$ sufficiently large.
Since $L$ is totally complex, the narrow ray class group is the same as the ray class group, 
e.g., \cite[page 368]{Narkiewicz}.
Let $\left( \mathcal{O}_L/\mathbf{m} \right)^{\times}$ denote the units mod $\mathbf{m}$ of $L$.

\begin{lemma} \label{Lstabilize}
If $\mathop{\rm Cl}(L)[p]$ is trivial, then the $p$-ranks of $\mathop{\rm Cl}_{\mathfrak{p}^i}(L)$ and $(\mathcal{O}_L/\mathfrak{p}^i)^{\times}$ stabilize at the same index $i$.
\end{lemma}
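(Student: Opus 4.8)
The plan is to use the exact sequence from class field theory relating the ray class group to the unit group modulo the conductor. Recall that for each modulus $\mathbf{m}=\mathfrak{p}^i$, there is an exact sequence
\[
\mathcal{O}_L^{\times} \to \left( \mathcal{O}_L/\mathfrak{p}^i \right)^{\times} \to \mathop{\rm Cl}_{\mathfrak{p}^i}(L) \to \mathop{\rm Cl}(L) \to 1,
\]
obtained from the definition of the ray class group. Since we are interested in $p$-ranks, first I would tensor with $\mathbb{F}_p$ (or pass to $p$-Sylow subgroups) and use the hypothesis that $\mathop{\rm Cl}(L)[p]$ is trivial, which (as in the proof of Proposition \ref{PclassGL}) forces $p$ to be regular and hence, by Proposition \ref{Prational}, gives a unique prime $\mathfrak{p}$ above $p$ and $\mathop{\rm Cl}^{\{p\}}(L)[p]$ trivial. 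With $\mathop{\rm Cl}(L)[p]=0$, the $p$-part of the exact sequence degenerates: the cokernel term $\mathop{\rm Cl}(L)$ contributes nothing, so up to $p$-rank the map $\left(\mathcal{O}_L/\mathfrak{p}^i\right)^{\times} \to \mathop{\rm Cl}_{\mathfrak{p}^i}(L)$ is surjective on $p$-parts with kernel the image of the global units $\mathcal{O}_L^{\times}$.

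Next I would make the dependence on $i$ explicit. The image of $\mathcal{O}_L^{\times}$ in $\left(\mathcal{O}_L/\mathfrak{p}^i\right)^{\times}$ modulo $p$-th powers is a quotient of $\mathcal{O}_L^{\times}/p$, which has fixed $\mathbb{F}_p$-dimension $d/2$ independent of $i$ (as quoted from \cite[page 120]{Koch}); moreover for $i$ large enough the reduction map $\mathcal{O}_L^{\times}/p \to \left(\mathcal{O}_L/\mathfrak{p}^i\right)^{\times}\otimes\mathbb{F}_p$ is injective, since a global unit that is a $p$-th power to arbitrarily high $\mathfrak{p}$-adic precision is a $p$-th power in $\mathcal{O}_{\mathfrak{p}}^{\times}$, hence (again using regularity and the structure from Proposition \ref{PclassGL}) its class in $\mathcal{O}_L^{\times}/p$ is controlled by $\varphi_2$, which is injective under the triviality of $\mathop{\rm Cl}(L)[p]$. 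So for $i$ sufficiently large the contribution of the global units to the $p$-rank is the constant $d/2$, and therefore
\[
\dim_{\mathbb{F}_p}\mathop{\rm Cl}_{\mathfrak{p}^i}(L)[p] \;=\; \dim_{\mathbb{F}_p}\!\left(\left(\mathcal{O}_L/\mathfrak{p}^i\right)^{\times}\otimes \mathbb{F}_p\right) \;-\; d/2
\]
once $i$ is in the stable range for the injectivity above. Since the right-hand side is the $p$-rank of $(\mathcal{O}_L/\mathfrak{p}^i)^{\times}$ minus a constant, the two $p$-ranks increase together and become constant at exactly the same index.

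To finish, I would combine this with the fact (Corollary \ref{rrbound}) that the $p$-rank of $(\mathcal{O}_L/\mathfrak{p}^{n+1})^{\times}$ is eventually constant, equal to $d+1$ for $n \geq p^{r+2}$, and is non-decreasing in $n$ (the quotient maps $(\mathcal{O}_L/\mathfrak{p}^{i+1})^{\times} \to (\mathcal{O}_L/\mathfrak{p}^{i})^{\times}$ are surjective, so $p$-ranks cannot drop). Hence the unit-side $p$-rank is a non-decreasing eventually-constant function of $i$; by the displayed identity the class-group-side $p$-rank is that same function minus the constant $d/2$, so it stabilizes precisely when the unit side does. The main obstacle I anticipate is the injectivity step: showing that, for $i$ past a definite threshold, no nontrivial class in $\mathcal{O}_L^{\times}/p$ dies in $(\mathcal{O}_L/\mathfrak{p}^i)^{\times}\otimes\mathbb{F}_p$. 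This is where one genuinely needs $\mathop{\rm Cl}(L)[p]=0$ and the identification of $\mathcal{O}_L^{\times}/p$ with a subspace of $\mathcal{O}_{\mathfrak{p}}^{\times}/p$ from Proposition \ref{PclassGL}, together with the fact that the $\mathfrak{p}$-adic filtration on $\mathcal{O}_{\mathfrak{p}}^{\times}$ is separated; packaging this cleanly so that ``the same index $i$'' is literally correct (rather than just asymptotically) is the delicate point, and I would handle it by comparing both sides to the fixed quantity $d/2$ term by term in $i$.
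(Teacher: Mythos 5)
Your starting point is the same as the paper's: you use the ray class exact sequence (the paper's \eqref{Ecohenray}, from Cohen) and the hypothesis that $\mathop{\rm Cl}(L)[p]$ is trivial to express the $p$-part of $\mathop{\rm Cl}_{\mathfrak{p}^i}(L)$ as the quotient of the $p$-part of $(\mathcal{O}_L/\mathfrak{p}^i)^{\times}$ by the image of the global units, so that the ray class $p$-rank is the unit-group $p$-rank minus the dimension of that image. The gap is in how you finish. Your displayed identity, with the global-unit contribution pinned at the constant $d/2$, is only valid once $i$ is past the threshold where $1+\mathfrak{p}^i\subseteq(\mathcal{O}_{\mathfrak{p}}^{\times})^p$ and the injectivity of $\varphi_2$ (via Proposition \ref{PclassGL}) can be invoked. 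So your argument shows the two ranks differ by a constant, hence stabilize together, only in that range; it does not rule out that below the threshold, where the image of the global units has dimension strictly less than $d/2$, this image grows in lockstep with the unit-side rank, so that one side is constant on a stretch where the other is not. That is exactly the ``literally the same index'' issue you flag, and your proposed fix --- comparing both sides to $d/2$ term by term --- cannot work there, because the $d/2$ identity simply fails for small $i$. You also quietly need the class-side $p$-rank to be non-decreasing in $i$ (surjectivity of $\mathop{\rm Cl}_{\mathfrak{p}^{i+1}}(L)\to\mathop{\rm Cl}_{\mathfrak{p}^i}(L)$), which you never state; you only record monotonicity on the unit side.

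The paper's proof avoids any large-$i$ threshold by comparing the two moduli $\mathfrak{p}^i$ and $\mathfrak{p}^{i+1}$ directly: in the commutative diagram the same group $U(L)[p]$ maps compatibly to both levels, so at each index the rank of $(\mathcal{O}_L/\mathfrak{p}^i)^{\times}[p]$ and the dimension of the unit image stabilize together, and hence so does the rank of the cokernel, which is $\mathop{\rm Cl}_{\mathfrak{p}^i}(L)[p]$ under the hypothesis. This level-by-level comparison is what makes ``the same index'' literal, and it requires neither the injectivity of $\varphi_2$ (a nontrivial Leopoldt-type input, here a consequence of Proposition \ref{PclassGL}) nor the value $d/2$. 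To repair your version, either import that consecutive-level comparison, or supply a separate argument controlling the indices below your injectivity threshold; as written, the proposal proves only the asymptotic statement, not Lemma \ref{Lstabilize} as stated.
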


\begin{proof}
Consider the exact sequence \cite[3.2.3]{Cohen}:
\begin{equation} \label{Ecohenray}
U(L)[p] \stackrel{\rm{H}o_\mathbf{m}}{\to} ( \mathcal{O}_L / \mathbf{m} )^{\times}[p]
\stackrel{\psi}{\to}   {\mathop{\rm Cl}}_{\mathbf{m}}(L)[p]   \stackrel{\phi}{\to}  \mathop{\rm Cl}(L)[p] \to 0.
\end{equation}
If $\mathop{\rm Cl}(L)[p]$ is trivial, then ${\mathop{\rm Cl}}_{\mathbf{m}}(L)[p] =  ( \mathcal{O}_L / \mathbf{m} )^{\times}[p]/(\rm{H}o_{\mathbf{m}}(U(L)[p])$ by \eqref{Ecohenray}.
Consider \eqref{Ecohenray} for the moduli $\mathbf{m}=\mathfrak{p}^i$ and $\mathfrak{p}^{i+1}$.
There is a commutative diagram
\[\xymatrix{ U(L)[p] \ar[r]^{\rm{H}o_{\mathfrak{p}^{i+1}}} \ar@{=}[d] & (\mathcal{O}_L/\mathfrak{p}^{i+1})^{\times}[p] \ar[d] \\
U(L)[p] \ar[r]^{\rm{H}o_{\mathfrak{p}^i}} & (\mathcal{O}_L/\mathfrak{p}^i)^{\times}[p].} \]
Consider the surjection of the Frattini quotients
$(\mathcal{O}_L/\mathfrak{p}^{i+1})^{\times}[p]'  \to (\mathcal{O}_L/\mathfrak{p}^i)^{\times}[p]'$.
The $p$-rank of $(\mathcal{O}_L/\mathfrak{p}^i)^{\times}$ stabilizes at index $i$ if and only if $i$ is the first value such that this
surjection is an isomorphism. 
This is equivalent to the equality $\dim_{\mathbb{F}_p}({\rm Im}(\rho_{\mathfrak{p}^{i+1}}))=\dim_{\mathbb{F}_p}({\rm Im}(\rho_{\mathfrak{p}^i}))$ or the fact that the $p$-rank of $\mathop{\rm Cl}_{\mathfrak{p}^i}(L)[p]$ stabilizes at index $i$.
\end{proof}

\subsection{Classifying the extension} \label{Sclass}

Let $N=\operatorname{Gal}(E/L)$, $G=\operatorname{Gal}(E/K)$, and $Q=\operatorname{Gal}(L/K)$.  
Let $\omega$ be the element of $\rm{H}^2(Q, N)$ classifying
\eqref{Egaloisexact} $1 \to N \to G \to Q \to 1$.
Since $Q$ and $N$ are both elementary abelian $p$-groups, 
the structure of $\rm{H}^2(Q,N)$ can be computed abstractly.  However, the precise identification 
of $\omega \in \rm{H}^2(Q, N)$ depends intrinsically on the structure of $G$. 

For example, by Lemma \ref{LdominateH}, $G$ surjects onto a Heisenberg group.
Then \eqref{Egaloisexact} is not split because 
the short exact sequence for the Heisenberg group has no splitting. 
This implies that $\omega$ is non-trivial (i.e., $G$ is not a semi-direct product).

Consider a section $s:Q \to G$ of the extension \eqref{Egaloisexact}.
Without loss of generality, we assume that $s(1)=1$ and
\begin{equation} \label{Esectionextend}
s(\tau_0^{e_0} \cdots \tau_r^{e_r}) = s(\tau_0)^{e_0}\cdots s(\tau_r)^{e_r}, \text{   for } 0\leq e_i \leq p-1.
\end{equation}

Then there is a $2$-cocycle $\tilde{\omega}:Q\times Q \to N$ defined with the formula
\[\tilde{\omega}(q_1,q_2) = s(q_1) s(q_2) s(q_1q_2)^{-1}. \]
The class of $\tilde{\omega}$ in $\rm{H}^2(Q, N)$ is $\omega$;
in particular, it does not depend on the choice of $s$. 

Consider the generators $\tau_i$ with $0 \leq i \leq r$ for 
$Q \simeq (\mathbb{Z}/p\mathbb{Z})^{r+1}$ from Section~\ref{Sgalois}.
For $0\leq i \leq r$, define elements $a_i$ by
\begin{equation}\label{Daaa}
 a_i = s(\tau_i)^p,
\end{equation}
and for $0 \leq j < k \leq r$, define $c_{j,k}$ by
\begin{equation}\label{Dccc}
c_{j,k} = [ s(\tau_k), s(\tau_j) ] = s(\tau_k) s(\tau_j) s(\tau_k)^{-1} s(\tau_j)^{-1} .
\end{equation}
Note that $a_i,c_{j,k} \in N$ since their images in $Q$ are trivial.
These values provide a useful way to classify the extension \eqref{Egaloisexact}
and play a key role in our analysis of ${\rm Ker}(d_2)$, see Theorem~\ref{Tkerd2}.

The difficulty is that not every section $s$ satisfies \eqref{Esectionextend}.
Thus, following \cite[IV, \S 3]{Brown},  
suppose $\omega': Q \times Q \to N$ is another $2$-cocycle representing the class
$\omega \in \rm{H}^2(Q,N)$.  We may choose $\omega'$ such that
$\omega'(q,1)=\omega'(1,q)=1$ for all $q \in Q$.
By \cite[page~92]{Brown}, $\omega'$ determines a unique extension as in \eqref{Egaloisexact},
together with a section $t: Q \to G$ such that $t(1) = 1$.
By \cite[IV \S 3 (3.3)]{Brown}, the correspondence between $t$ and $\omega'$ is described by \begin{equation}\label{section-to-cocycle}\omega'(q_1,q_2) = t(q_1)t(q_2) t(q_1q_2)^{-1}.\end{equation} 
This yields the following description of $G=\operatorname{Gal}(E/K)$
as an abstract group: the elements of $G$ are in bijection with $N \times Q$; 
this bijection takes $(n, q)$ to $n t(q)$. 
The group law is:
$$(n_1,q_1)(n_2,q_2) = (n_1 (q_1 n_2)  \omega(q_1,q_2), q_1 q_2).$$

The section $t$ for $\omega$ might not satisfy the conditions in \eqref{Esectionextend}.
To fix this, we set $s(\tau_i) = t(\sigma) = 0 \times \tau_i$ and extend $s$ to a set-theoretic section 
$s: Q \to G$ using \eqref{Esectionextend}.
Next, we show that the values of $a_i=s(\tau_i)^p$ and $c_{j,k}=[s(\tau_k), s(\tau_j)]$ 
can be computed from $\omega'$. 

\begin{lemma} \label{Lchangesection} With notation as above:
\[a_i = \sum_{\ell = 1}^{p-1} \omega'(\tau_i^\ell, \tau_i) \ {\rm and} \ 
c_{j,k}= \omega'(\tau_k, \tau_j) - \omega'(\tau_j, \tau_k).\]
\end{lemma}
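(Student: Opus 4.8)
The plan is to carry out a direct computation in the explicit model of $G$ recorded just above the statement, in which elements of $G$ are identified with pairs $(n,q)\in N\times Q$ via $(n,q)\leftrightarrow n\,t(q)$ and the group law reads $(n_1,q_1)(n_2,q_2)=(n_1+q_1\cdot n_2+\omega'(q_1,q_2),\,q_1q_2)$; here I write the elementary abelian group $N$ additively, $q\cdot n$ denotes the action of $Q$ on $N$, and $\omega'$ is the chosen normalized cocycle, so that $\omega'(q,1)=\omega'(1,q)=0$. By construction $s(\tau_i)=t(\tau_i)$ is the pair $(0,\tau_i)$, and under this identification an element of $N\subset G$ is a pair with trivial $Q$-component, since $(n,1)\leftrightarrow n\,t(1)=n$.

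For the first formula I would compute the powers of $s(\tau_i)$ by induction on $m$, proving $s(\tau_i)^m=\big(\sum_{\ell=1}^{m-1}\omega'(\tau_i^{\ell},\tau_i),\,\tau_i^m\big)$. The case $m=1$ is the empty sum. For the inductive step one multiplies $s(\tau_i)^m$ on the right by $s(\tau_i)=(0,\tau_i)$; in that product the action term is $\tau_i^m\cdot 0=0$, so the only new contribution is the cocycle value $\omega'(\tau_i^m,\tau_i)$, which advances the sum by one term. Setting $m=p$ and using $\tau_i^p=1$ in $Q$ gives $s(\tau_i)^p=\big(\sum_{\ell=1}^{p-1}\omega'(\tau_i^{\ell},\tau_i),\,1\big)$, and reading off the $N$-component yields $a_i=\sum_{\ell=1}^{p-1}\omega'(\tau_i^{\ell},\tau_i)$.

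For the commutators I would rewrite the defining relation $c_{j,k}=s(\tau_k)s(\tau_j)s(\tau_k)^{-1}s(\tau_j)^{-1}$ as the inverse-free identity $c_{j,k}\,s(\tau_j)s(\tau_k)=s(\tau_k)s(\tau_j)$ in $G$ and evaluate both sides in the model. The right-hand side is $(0,\tau_k)(0,\tau_j)=(\omega'(\tau_k,\tau_j),\,\tau_k\tau_j)$. On the left, $c_{j,k}$ corresponds to $(c_{j,k},1)$, and a two-step multiplication — using $\omega'(1,\tau_j)=0$ and $\tau_j\cdot 0=0$ — gives $(c_{j,k},1)(0,\tau_j)(0,\tau_k)=(c_{j,k}+\omega'(\tau_j,\tau_k),\,\tau_j\tau_k)$. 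Since $Q$ is abelian, $\tau_k\tau_j=\tau_j\tau_k$, so comparing $N$-components produces $c_{j,k}=\omega'(\tau_k,\tau_j)-\omega'(\tau_j,\tau_k)$.

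I do not anticipate a genuine obstacle: once the model for $G$ is fixed, both identities are mechanical. The one point worth flagging is that the a priori nontrivial action of $Q$ on $N$ plays no role, because in every product entering the argument the right-hand factor has trivial $N$-component, so the term $q_1\cdot n_2$ vanishes; this is precisely why the answers come out purely in terms of values of $\omega'$. One should also be careful to invoke the normalization $\omega'(q,1)=\omega'(1,q)=0$ exactly where it is used, namely to identify $(n,1)$ with $n$ and to simplify $(c_{j,k},1)(0,\tau_j)$.
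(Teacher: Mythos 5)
Your proposal is correct and is essentially the paper's own argument in different clothing: the inductive formula $s(\tau_i)^m=\bigl(\sum_{\ell=1}^{m-1}\omega'(\tau_i^{\ell},\tau_i),\,\tau_i^m\bigr)$ computed in the crossed-product coordinates is exactly the paper's telescoping product of the relation $t(\tau_i^{\ell})t(\tau_i)=\omega'(\tau_i^{\ell},\tau_i)\,t(\tau_i^{\ell+1})$, and your inverse-free commutator identity is the same rearrangement the paper performs directly with $t(\tau_j),t(\tau_k)$. Both use only the normalization of $\omega'$ and commutativity of $Q$, so the proofs coincide in substance.
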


\begin{proof}
First, by \eqref{section-to-cocycle}, $\omega'(\tau_i^\ell, \tau_i) = t(\tau_i^\ell) t(\tau_i) t(\tau_i^{\ell+1})^{-1}$.
Taking the telescoping product yields
\begin{equation} \label{Erone}
\omega'(\tau_i, \tau_i) \omega'(\tau_i^2, \tau_i) \cdots \omega'(\tau_i^{p-1}, \tau_i) = t(\tau_i)^p.
\end{equation}

Second, by \eqref{section-to-cocycle}, 
\begin{equation}\label{ssigmatau}
\omega'(\tau_k, \tau_j)=t(\tau_k) t(\tau_j) t(\tau_k \tau_j)^{-1}.\end{equation} 
By \eqref{section-to-cocycle},
$\omega'(\tau_j, \tau_k) = t(\tau_j) t(\tau_k) t(\tau_j \tau_k)^{-1}$. 
Since $\tau_j \tau_k = \tau_k \tau_j$ in 
$Q$, then $t(\tau_j \tau_k) =t(\tau_k \tau_j)$. So
\begin{equation} \label{ssigmatau2}
\omega'(\tau_j, \tau_k)^{-1} = t(\tau_k \tau_j) t(\tau_k)^{-1} t(\tau_j)^{-1}.
\end{equation}
Multiplying \eqref{ssigmatau} and \eqref{ssigmatau2} yields 
\begin{equation} \label{Ertwo}
\omega'(\tau_k, \tau_j) \omega'(\tau_j, \tau_k)^{-1}= t(\tau_k) t(\tau_j) t(\tau_k)^{-1} t(\tau_j)^{-1} =[t(\tau_k),t(\tau_j)].
\end{equation}

To finish, we replace $t(\tau_i)$ with $s(\tau_i)$ in \eqref{Erone} and \eqref{Ertwo} and rewrite the equations additively.
\end{proof}

\begin{remark}
Lemma \ref{Lchangesection} is a generalization of \cite[Lemma 6.10]{WINF:Baction}.
To see this, note that if the section $t$ does satisfy \eqref{Esectionextend} then 
$\omega'(\tau_i^\ell, \tau_i)=0$ for $0 \leq \ell \leq p-2$.
\end{remark}

\subsection{The transgression map} \label{Salgebraker}

Let $M$ be a $G$-module such that $N$ acts trivially on $M$.
Since the action of $N$ on $M$ is trivial,
an element $\phi \in \rm{H}^1(N, M)$ is uniquely determined by a homomorphism $\phi: N \to M$.

Furthermore, $Q$ acts by conjugation on $N$. 
Then $\phi \in \rm{H}^1(N, M)^Q$ if and only if
the homomorphism $\phi$ is $Q$-invariant, i.e., $\phi(q \cdot n) = q \cdot \phi(n)$
for all $n \in N$.

As in Section \ref{Skummerobstruction}, associated with the exact sequence \eqref{Egaloisexact}, 
there is the transgression map
\begin{equation} \label{Ed2}
d_2: \rm{H}^1(N, M)^{Q} \to \rm{H}^2(Q, M).
\end{equation}

We now give an algebraic description of ${\rm Ker}(d_2)$.
Recall the definitions of $a_i, c_{j,k} \in N$ from \eqref{Daaa} and \eqref{Dccc}. 
Write $N_{\tau_i} = 1 + {\tau_i} + \cdots + {\tau_i}^{p-1}$ for the norm of $\tau_i$.

\begin{theorem} \label{Tkerd2} \cite[Theorem 1.2]{WINF:Baction}.
Let $M$ be a $G$-module such that $N$ acts trivially on $M$.
Suppose $\phi \in \rm{H}^1(N,M)^Q$ is a class represented by a $Q$-invariant homomorphism $\phi: N \to M$.
Then $\phi$ is in the kernel of $d_2$ if and only if there exist $m_0, \ldots, m_r \in M$ such that
\begin{enumerate}
\item $\phi(a_i)= - N_{\tau_i} m_i$ for $0 \leq i \leq r$ and
\item $\phi(c_{j,k})=(1-\tau_k)m_j - (1-\tau_j)m_k$ for $0 \leq j < k \leq r$. 
\end{enumerate}
\end{theorem}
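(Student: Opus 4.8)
The plan is to start from the identity $d_2(\phi)=-[\phi\circ\tilde\omega]$ of Section~\ref{Skummerobstruction}, which depends only on the class $\omega$, so that we may replace $\tilde\omega$ by any normalized $2$-cocycle $\omega'\colon Q\times Q\to N$ representing $\omega$. Because $N$ acts trivially on $M$, the group $M$ is a $Q$-module and a $Q$-invariant homomorphism $\phi\colon N\to M$ is exactly a $Q$-equivariant one; hence the composite $\psi:=\phi\circ\omega'\colon Q\times Q\to M$ is again a normalized $2$-cocycle (apply $\phi$, which is additive and $Q$-equivariant, to the cocycle identity for $\omega'$). Thus $d_2(\phi)=0$ if and only if $[\psi]=0$ in $\rm{H}^2(Q,M)$. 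By Lemma~\ref{Lchangesection}, $\phi(a_i)=\sum_{\ell=1}^{p-1}\psi(\tau_i^\ell,\tau_i)$ and $\phi(c_{j,k})=\psi(\tau_k,\tau_j)-\psi(\tau_j,\tau_k)$, so it suffices to prove the following purely cohomological statement: a normalized $2$-cocycle $\psi$ on $Q\simeq(\mathbb{Z}/p\mathbb{Z})^{r+1}$ is a coboundary if and only if there exist $m_0,\dots,m_r\in M$ with $\sum_{\ell=1}^{p-1}\psi(\tau_i^\ell,\tau_i)=-N_{\tau_i}m_i$ for $0\le i\le r$ and $\psi(\tau_k,\tau_j)-\psi(\tau_j,\tau_k)=(1-\tau_k)m_j-(1-\tau_j)m_k$ for $0\le j<k\le r$.

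\textbf{The forward direction.} Suppose $\psi=\partial f$ for a normalized $1$-cochain $f\colon Q\to M$, and put $m_i:=-f(\tau_i)$. Expanding $\partial f(\tau_i^\ell,\tau_i)=\tau_i^\ell f(\tau_i)-f(\tau_i^{\ell+1})+f(\tau_i^\ell)$ and summing over $1\le\ell\le p-1$, the terms $-f(\tau_i^{\ell+1})+f(\tau_i^\ell)$ telescope to $f(\tau_i)-f(\tau_i^p)=f(\tau_i)$ since $\tau_i^p=1$ and $f(1)=0$, while $\sum_{\ell=1}^{p-1}\tau_i^\ell f(\tau_i)=(N_{\tau_i}-1)f(\tau_i)$; the total is $N_{\tau_i}f(\tau_i)=-N_{\tau_i}m_i$. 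Similarly, using $\tau_j\tau_k=\tau_k\tau_j$ in $Q$, one computes $\partial f(\tau_k,\tau_j)-\partial f(\tau_j,\tau_k)=(1-\tau_j)f(\tau_k)-(1-\tau_k)f(\tau_j)=(1-\tau_k)m_j-(1-\tau_j)m_k$. This gives the two conditions.

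\textbf{The converse.} Given $(m_i)$ satisfying the two conditions, I would deduce $[\psi]=0$ by comparing the bar resolution of $\mathbb{Z}$ over $\mathbb{Z}[Q]$ with the standard minimal free resolution $P_\bullet$, obtained as the tensor product over the $r+1$ cyclic factors $\langle\tau_i\rangle$ of the $2$-periodic complexes $\cdots\to\mathbb{Z}[\langle\tau_i\rangle]\xrightarrow{N_{\tau_i}}\mathbb{Z}[\langle\tau_i\rangle]\xrightarrow{\tau_i-1}\mathbb{Z}[\langle\tau_i\rangle]\to\mathbb{Z}$. In degree $2$, $P_2$ is free on symbols $e_i^{(2)}$ ($0\le i\le r$) and $e_j\wedge e_k$ ($0\le j<k\le r$), with boundary $e_i^{(2)}\mapsto N_{\tau_i}e_i$ and $e_j\wedge e_k\mapsto(\tau_j-1)e_k-(\tau_k-1)e_j$; dualizing into $M$, a $2$-cocycle for $P_\bullet$ is a coboundary precisely when its $e_i^{(2)}$-entry lies in $N_{\tau_i}M$ and its $e_j\wedge e_k$-entry has the form $(1-\tau_k)m_j-(1-\tau_j)m_k$ for one common tuple $(m_i)$. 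A chain map from the bar resolution to $P_\bullet$ — built by straightening words in the $\tau_i$ into the ordered form of \eqref{Esectionextend} — transports $[\psi]$ to the $P_\bullet$-cocycle whose $e_i^{(2)}$-entry is $\sum_{\ell=1}^{p-1}\psi(\tau_i^\ell,\tau_i)$ and whose $e_j\wedge e_k$-entry is $\psi(\tau_k,\tau_j)-\psi(\tau_j,\tau_k)$, up to sign and relabeling of the $m_i$. The hypotheses say exactly that this cocycle is a coboundary, so $[\psi]=0$; unwinding through Lemma~\ref{Lchangesection} and \eqref{Daaa}, \eqref{Dccc} recovers the statement in terms of $a_i$ and $c_{j,k}$.

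\textbf{The main obstacle.} The crux is the explicit comparison chain map in the converse, equivalently the combinatorics of straightening an arbitrary word in $\tau_0,\dots,\tau_r$ into $\tau_0^{e_0}\cdots\tau_r^{e_r}$: one must check that only the $p$th-power relations (contributing the $a_i$) and the commutator relations (contributing the $c_{j,k}$) intervene, and carefully track the module twists $\tau_i-1$ that accumulate at each transposition and each carry past a $p$th power. This is a finite but intricate bookkeeping; everything else — the cocycle identity for $\phi\circ\omega'$, the telescoping sums, and the independence of all choices of representative and section — is routine, and the forward direction is simply the evaluation of the comparison map on the symbols $e_i^{(2)}$ and $e_j\wedge e_k$.
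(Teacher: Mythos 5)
The paper does not prove Theorem~\ref{Tkerd2} at all: it is quoted verbatim from \cite[Theorem 1.2]{WINF:Baction}, so your argument has to be judged on its own merits rather than against an in-paper proof. Your reduction is sound: since $\phi$ is a $Q$-equivariant homomorphism and $N$ acts trivially on $M$, the composite $\psi=\phi\circ\omega'$ is a normalized $2$-cocycle whose class is independent of the representative $\omega'$, and $d_2(\phi)=0$ iff $[\psi]=0$; combined with Lemma~\ref{Lchangesection} this correctly reduces the theorem to a statement about $2$-cocycles on $Q\simeq(\mathbb{Z}/p\mathbb{Z})^{r+1}$, and your forward direction (with $m_i=-f(\tau_i)$) is complete. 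The gap is the converse, which is exactly the half you flag as ``the main obstacle'' and never carry out: the whole content sits in the claim that the comparison with the tensor product of $2$-periodic resolutions transports $[\psi]$ to the cocycle with entries $\sum_{\ell}\psi(\tau_i^\ell,\tau_i)$ and $\psi(\tau_k,\tau_j)-\psi(\tau_j,\tau_k)$, and this is asserted ``up to sign'', not proved. The sign hedge is not harmless here: replacing $m_i$ by $-m_i$ flips conditions (1) and (2) simultaneously, so the relative sign between them is intrinsic to the statement. With the differential $e_j\wedge e_k\mapsto(\tau_j-1)e_k-(\tau_k-1)e_j$ and the transport entries exactly as you wrote them, the coboundary criterion you would obtain is $\phi(a_i)=+N_{\tau_i}n_i$ together with $\phi(c_{j,k})=(1-\tau_k)n_j-(1-\tau_j)n_k$, which for general $M$ describes a genuinely different subset than (1)+(2); so the bookkeeping you defer is where correctness lives, not a routine afterthought. (Also note that to transport a bar cocycle you need a chain map $P_\bullet\to\mathrm{Bar}_\bullet$, e.g.\ $e_i^{(2)}\mapsto\sum_\ell[\tau_i^\ell|\tau_i]$ and $e_j\otimes e_k\mapsto[\tau_j|\tau_k]-[\tau_k|\tau_j]$, not the word-straightening map in the direction you describe.)

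There is a much shorter way to close the converse (and in fact both directions at once) that matches the structure of Section~\ref{Sclass}: the class $[\psi]$ vanishes iff the extension $1\to M\to \Gamma\to Q\to 1$ it classifies (the pushout of \eqref{Egaloisexact} along $\phi$) splits. Taking the set-theoretic section $u$ with $u(q_1)u(q_2)=\psi(q_1,q_2)u(q_1q_2)$ and modified lifts $g_i=m_iu(\tau_i)$, a two-line computation gives $g_i^p=N_{\tau_i}m_i+\sum_{\ell=1}^{p-1}\psi(\tau_i^\ell,\tau_i)$ and $[g_k,g_j]=\psi(\tau_k,\tau_j)-\psi(\tau_j,\tau_k)-\bigl((1-\tau_k)m_j-(1-\tau_j)m_k\bigr)$. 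Since $Q$ is presented by the generators $\tau_0,\dots,\tau_r$ with relations $\tau_i^p=1$ and $[\tau_j,\tau_k]=1$, a homomorphic splitting exists iff one can choose $m_0,\dots,m_r$ making both displayed expressions vanish, which is word for word conditions (1) and (2), with the theorem's signs appearing automatically. This eliminates the resolution comparison entirely and would turn your sketch into a complete proof.
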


\subsection{The transgression map for Fermat curves} \label{StransFermat}

We now specialize to the Fermat curve setting.  
If $M$ is any subquotient of the relative homology $\rm{H}_1(U,Y; \mathbb{Z}/p \mathbb{Z})$ of the Fermat curve of degree $p$, then 
$M$ is a $G$-module on which $N$ acts trivially.

Recall that $Q={\rm Gal}(L/K)$ is generated by $\{\tau_i \mid 0 \leq i \leq r\}$.
From Section~\ref{Shomology},
$B_i \in \Lambda_1$ is such that $\tau_i \cdot \beta = B_i \beta$.
The norm of $B_{\tau_i}$ is almost always zero, 
which is useful for determining subspaces of the 
kernel of $d_2$ arising from certain extensions of $K$ in the next section.

\begin{theorem} \label{Tnorm}
\cite[Theorem 4.6]{WINF:Baction} 
Suppose $M=\rm{H}_1(U, Y; \mathbb{Z}/p\mathbb{Z})$ is the relative homology of the Fermat curve of exponent $p$.
Let $N_{\tau_i}$ denote the norm of $B_i$ in $\Lambda_1$.
If $p \geq 5$, then $N_{\tau_i} =0$ for $0 \leq i \leq r$;
if $p=3$, then $N_{\tau_1}=0$ but $N_{\tau_0} = y_0^2y_1^2$.
\end{theorem}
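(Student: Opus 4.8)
The plan is to compute each norm $N_{\tau_i} = \sum_{\ell=0}^{p-1} B_i^\ell$ directly in the ring $\Lambda_1 = (\mathbb{Z}/p\mathbb{Z})[y_0,y_1]/\langle y_0^p, y_1^p\rangle$, using the known formulas for the action elements $B_i = B_{\tau_i}$ on the generator $\beta$ of $M = \rm{H}_1(U,Y;\mathbb{Z}/p\mathbb{Z})$. By \cite[Section 10.5]{Anderson} and the setup in Section~\ref{Shomology}, each $B_i$ satisfies $B_i - 1 \in \langle y_0 y_1\rangle$, so write $B_i = 1 + y_0 y_1 D_i$ for some $D_i \in \Lambda_1$; the explicit shape of $D_i$ is recorded in \cite{WINF:Baction} (this is where I would quote the precise formulas rather than rederive them). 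The first step is therefore to substitute these into $N_{\tau_i} = \sum_{\ell=0}^{p-1}(1 + y_0 y_1 D_i)^\ell$ and expand. Since $(y_0 y_1)^2 = y_0^2 y_1^2$ and higher powers of $y_0 y_1$ vanish only when $p=3$ (because $y_0^3 = y_1^3 = 0$ forces $(y_0 y_1)^2 \cdot y_0 y_1 = 0$ already for $p=3$, whereas for $p \geq 5$ one must track more terms), the combinatorics of the binomial expansion will differ qualitatively between the two cases, which is exactly the dichotomy in the statement.

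Next I would organize the expansion by collecting the coefficient of each monomial $y_0^a y_1^b$. The key identity is $\sum_{\ell=0}^{p-1}\binom{\ell}{m} = \binom{p}{m+1} \equiv 0 \pmod p$ for $1 \leq m \leq p-1$, together with $\sum_{\ell=0}^{p-1} 1 = p \equiv 0$. So the constant term of $N_{\tau_i}$ vanishes, and the linear-in-$(y_0y_1)$ term has coefficient $\sum_{\ell=0}^{p-1}\ell \equiv 0 \pmod p$ when $p \geq 3$. The surviving contributions come from the quadratic term $\binom{\ell}{2}(y_0 y_1 D_i)^2$ onward. For $p \geq 5$, I expect every such surviving coefficient to land in the ideal killed by the relations or to sum to $0$ modulo $p$ using the explicit form of $D_i$ from \cite{WINF:Baction}; this is the case where one shows $N_{\tau_i} = 0$ for all $i$. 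For $p = 3$, the only term that can survive is the $(y_0 y_1 D_i)^2 = y_0^2 y_1^2 D_i^2$ term with coefficient $\sum_{\ell=0}^{2}\binom{\ell}{2} = \binom{3}{3} = 1$, so $N_{\tau_i} = y_0^2 y_1^2 \bar D_i^{\,2}$ where $\bar D_i$ is the constant term of $D_i$ (all other terms of $D_i$ push the degree too high). One then reads off from the explicit $p=3$ formulas that $\bar D_1 = 0$ (giving $N_{\tau_1} = 0$) while $\bar D_0$ is a unit, so that $y_0^2 y_1^2 \bar D_0^{\,2} = y_0^2 y_1^2$ after rescaling, which gives $N_{\tau_0} = y_0^2 y_1^2$.

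The main obstacle is the case $p \geq 5$: there one cannot truncate after the quadratic term, and must verify that the full alternating/binomial sum of the higher-order contributions vanishes in $\Lambda_1$. This requires either a clean closed form for $\sum_{\ell} B_i^\ell$ — for instance recognizing it as $(B_i^p - 1)/(B_i - 1)$ formally and using that $B_i^p = 1$ in $\Lambda_1$ (since $\tau_i$ has order $p$, so $B_i$ represents an order-dividing-$p$ operator on $M$) together with $B_i - 1$ being a zero divisor supported on $y_0 y_1$ — or a careful term-by-term bookkeeping with the explicit $D_i$. I would try the formal route first: if $B_i^p = 1$ and $B_i = 1 + y_0 y_1 D_i$, then $(B_i - 1) N_{\tau_i} = B_i^p - 1 = 0$, so $y_0 y_1 D_i \cdot N_{\tau_i} = 0$; combined with the explicit low-degree structure of $N_{\tau_i}$ this should force $N_{\tau_i} = 0$ when $p \geq 5$ because there is "enough room" in the nilpotency range, whereas for $p = 3$ the range is too tight and $N_{\tau_0}$ survives. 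Establishing that this formal argument genuinely pins down $N_{\tau_i} = 0$ (rather than merely constraining it) is the step that will require the most care, and is where the explicit formulas from \cite[Theorem 4.6]{WINF:Baction} and the ambient computations of \cite{WINF:Baction} are indispensable.
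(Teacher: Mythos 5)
The paper offers no internal proof of this statement; it is quoted directly from \cite[Theorem 4.6]{WINF:Baction}, so the only question is whether your sketch would stand on its own, and in the case $p\geq 5$ it does not. The point you miss is that the binomial collapse is uniform in $p$: writing $B_i=1+y_0y_1D_i$, the polynomial identity $1+x+\cdots+x^{p-1}=(x-1)^{p-1}$ in $\mathbb{F}_p[x]$ gives $N_{\tau_i}=(B_i-1)^{p-1}=(y_0y_1)^{p-1}D_i^{p-1}=c_i^{\,p-1}\,y_0^{p-1}y_1^{p-1}$, where $c_i\in\mathbb{F}_p$ is the coefficient of $y_0y_1$ in $B_i-1$ (every non-constant term of $D_i$ is killed by $y_0^{p-1}y_1^{p-1}$). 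Equivalently, in your expansion all coefficients $\binom{p}{m+1}$ vanish mod $p$ except at $m=p-1$, where the coefficient is $1$; your stated identity wrongly includes $m=p-1$ in the vanishing range, and your claim that the combinatorics ``differ qualitatively'' between $p=3$ and $p\geq 5$ is not where the dichotomy lives. The whole theorem reduces to whether $c_i=0$, which is a fact about the explicit formulas for $B_i$ in \cite[Theorem 3.5]{WINF:Baction}; for $p\geq 5$ you never actually bring those formulas to bear or even identify that this single coefficient is what must vanish, so that half of the statement is not proved by your outline.

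Your fallback ``formal route'' cannot close this gap: from $B_i^p=1$ you only extract $(B_i-1)N_{\tau_i}=0$, but the annihilator of $B_i-1=y_0y_1D_i$ contains $y_0^{p-1}y_1^{p-1}$ for every $p$, and the exact formula above shows $N_{\tau_i}$ always lies in $\mathbb{F}_p\,y_0^{p-1}y_1^{p-1}$; there is no ``enough room'' distinction between $p=3$ and $p\geq 5$ at this level, so that constraint can never force $N_{\tau_i}=0$. What finishes the argument is the exact identity $N_{\tau_i}=(B_i-1)^{p-1}$ together with the vanishing of the $y_0y_1$-coefficient of $B_i-1$ for $p\geq 5$, read off from the explicit Galois action in \cite{WINF:Baction}. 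Your $p=3$ computation is essentially correct: with $B_\sigma-1=y_0y_1(1-y_0-y_1)$ and $B_\tau-1=y_0y_1(-y_0-y_1+y_0y_1)$, as recorded in Section~\ref{SNMQ}, one gets $c_0=1$ and $c_1=0$, hence $N_{\tau_0}=y_0^2y_1^2$ and $N_{\tau_1}=0$.
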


\section{Subspaces of the kernel of the transgression map} \label{Ssec4}

The results in this section apply for any odd prime $p$ satisfying Vandiver's Conjecture.
The main theme is that Galois extensions of the cyclotomic field $K=\mathbb{Q}(\zeta_p)$ 
determine subspaces of the kernel ${\rm Ker}(d_2)$ of the transgression map.

We study this when $M$ is a subquotient of the relative homology of the Fermat curve of degree $p$.
In Section \ref{Sabelian}, we determine the subspaces of ${\rm Ker}(d_2)$
arising from ray class, cyclotomic and Kummer field extensions of $K$;
Propositions \ref{Praykerd2}, \ref{Pcyckerd2}, \ref{Pkumkerd2} show that these are frequently trivial. 
The main result is Theorem \ref{PHeisKer} in Section \ref{Sheisclass} in which we determine the subspace of ${\rm Ker}(d_2)$ arising from Heisenberg extensions of $K$.  We compute this subspace with Magma for $p=3,5,7$
using our explicit knowledge of the action of $G_K$ on $M$.

\subsection{Subextensions and subspaces}

Suppose $\bar{E}$ is a subfield of $E$ containing $L$ such that $E/K$ is Galois.  
Let $\bar{G}={\rm Gal}(\bar{E}/K)$ and let 
$\bar{N} = {\rm Gal}(\bar{E}/L)$.
Then $\bar{G}$ is a quotient of $G$ and $\bar{N}$ is a quotient of $N$.
In this situation, there is an exact sequence
\begin{equation}\label{Egaloisexactbar} 
1 \to \bar{N} \to \bar{G} \to Q \to 1.
\end{equation}
An element $\bar{\phi} \in \rm{H}^1(\bar{N}, M)^Q$ is uniquely determined by a
$Q$-invariant homomorphism $\bar{\phi}: \bar{N} \to M$. 

\begin{lemma} \label{Lcentral}
If the conjugation action of $Q$ on $\bar{N}$ is trivial,
then $\rm{H}^1(\bar{N}, M)^Q \simeq (M^Q)^{\rho}$ where $\rho = {\rm dim}_{\mathbb{F}_p}(\bar{N})$.
\end{lemma}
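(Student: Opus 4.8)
The plan is to unravel both sides of the claimed isomorphism completely concretely and then exhibit a natural pairing. Since $\bar N$ is an elementary abelian $p$-group on which $Q$ acts trivially by hypothesis, the group cohomology $\rm{H}^1(\bar N, M)$ is just $\rm{H}om(\bar N, M)$, the abelian group of group homomorphisms, because $\bar N$ acts trivially on $M$ (it is a subquotient of the relative homology, so $N$ acts trivially, hence so does its quotient $\bar N$). Fixing an $\mathbb{F}_p$-basis $n_1,\dots,n_\rho$ of $\bar N$, where $\rho = \dim_{\mathbb{F}_p}(\bar N)$, any homomorphism $\bar\phi:\bar N\to M$ is determined freely by the tuple $(\bar\phi(n_1),\dots,\bar\phi(n_\rho))\in M^\rho$, giving an isomorphism $\rm{H}om(\bar N, M)\simeq M^\rho$ of $\mathbb{F}_p[Q]$-modules once we check $Q$-equivariance of this identification.

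The key step is to track the $Q$-action through this identification. The $Q$-action on $\rm{H}^1(\bar N, M)$ is the one already described in Section~\ref{Salgebraker}: $\bar\phi$ is $Q$-invariant precisely when $\bar\phi(q\cdot n) = q\cdot\bar\phi(n)$ for all $q\in Q$, $n\in\bar N$, where $q\cdot n$ refers to the conjugation action of $Q$ on $\bar N$ coming from \eqref{Egaloisexactbar}. Here is where the hypothesis enters: since the conjugation action of $Q$ on $\bar N$ is trivial, $q\cdot n = n$, so the condition reduces to $\bar\phi(n) = q\cdot\bar\phi(n)$ for all $q\in Q$ and all $n\in\bar N$. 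Equivalently, $\bar\phi(n)\in M^Q$ for each $n$, i.e.\ $\bar\phi$ factors through a homomorphism $\bar N\to M^Q$. Thus $\rm{H}^1(\bar N, M)^Q = \rm{H}om(\bar N, M^Q)$.

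Finally, I would apply the same basis argument as before, now with target $M^Q$ in place of $M$: a homomorphism $\bar N\to M^Q$ is freely determined by its values on the basis $n_1,\dots,n_\rho$, yielding the isomorphism $\rm{H}om(\bar N, M^Q)\simeq (M^Q)^\rho$. Composing, $\rm{H}^1(\bar N, M)^Q \simeq (M^Q)^\rho$ as claimed. One should note that $\bar N$ being an elementary abelian $p$-group (a quotient of the elementary abelian $N$) guarantees that homomorphisms out of it into the $\mathbb{F}_p$-vector space $M^Q$ are exactly $\mathbb{F}_p$-linear maps, so no subtlety arises from $p$-divisibility; the basis description is literally the statement that $\rm{H}om_{\mathbb{F}_p}(\bar N, M^Q)\simeq (M^Q)^{\rho}$.

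There is essentially no serious obstacle here: the statement is a direct unwinding of definitions, and the only thing to be careful about is keeping the two potentially different $Q$-actions straight — the conjugation action of $Q$ on $\bar N$ (which the hypothesis kills) versus the module action of $Q$ on $M$ (which survives and is what carves out $M^Q$). The mild point worth spelling out is why $\rm{H}^1$ of an abelian group acting trivially on its coefficients is $\rm{H}om$ rather than something larger; this is standard ($\rm{H}^1(A,M) = \rm{H}om(A,M)$ for trivial action), and for an elementary abelian $p$-group with $\mathbb{F}_p$-coefficients it is simply linear duality tensored up, but I would cite it rather than reprove it.
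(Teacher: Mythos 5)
Your proof is correct and follows essentially the same route as the paper: the paper's argument is the one-line observation that, with trivial conjugation action, $\bar{\phi}$ is $Q$-invariant exactly when $\bar{\phi}(\bar{N}) \subset M^Q$, after which the identification $\mathrm{Hom}(\bar{N}, M^Q) \simeq (M^Q)^{\rho}$ via a basis of $\bar{N}$ is immediate. Your write-up just spells out these same steps (trivial $\bar{N}$-action giving $\rm{H}^1 = \mathrm{Hom}$, the $Q$-invariance criterion, and the dimension count) in more detail.
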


\begin{proof}
This is clear since $\bar{\phi} \in \rm{H}^1(\bar{N}, M)$ is $Q$-invariant if and only if $\bar{\phi}(\bar{N}) \subset M^Q$.
\end{proof}

\begin{lemma} \label{LdecomposeH1}
There is a natural inclusion $\iota: \rm{H}^1(\bar N, M)^Q \hookrightarrow \rm{H}^1(N, M)^Q$.
\end{lemma}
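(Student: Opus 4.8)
The plan is to realize $\iota$ as the inflation map attached to the restriction homomorphism $\pi \colon N \to \bar N$ (restricting automorphisms of $E$ to $\bar E$), and then to check that this map is $Q$-equivariant and injective, so that it restricts to an inclusion on $Q$-invariants.

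First I would record that, since $N$ and $\bar N$ act trivially on $M$, we have $\rm{H}^1(N, M) = \mathrm{Hom}(N, M)$ and $\rm{H}^1(\bar N, M) = \mathrm{Hom}(\bar N, M)$, so precomposition with $\pi$ gives a well-defined map $\iota(\bar\phi) = \bar\phi \circ \pi$. It is injective: if $\bar\phi \circ \pi = 0$ then $\bar\phi$ vanishes on $\pi(N) = \bar N$ because $\pi$ is surjective. Next I would verify that $\pi$ intertwines the conjugation actions of $Q$ on $N$ and on $\bar N$. The surjection $G \to \bar G$ induced by restriction $E \to \bar E$ is compatible with the maps to $Q$ in \eqref{Egaloisexact} and \eqref{Egaloisexactbar}, and restricts to $\pi$ on $N$. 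For $q \in Q$, choosing a lift $g \in G$ of $q$ and letting $\bar g$ be its image in $\bar G$ (a lift of $q$ there), one gets $\pi(g n g^{-1}) = \bar g\, \pi(n)\, \bar g^{-1}$, i.e.\ $\pi(q \cdot n) = q \cdot \pi(n)$; these conjugation actions are well defined since $N$ and $\bar N$ are abelian.

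Given $\bar\phi \in \rm{H}^1(\bar N, M)^Q$ and $n \in N$, $q \in Q$, we then compute
\[
\iota(\bar\phi)(q \cdot n) = \bar\phi(\pi(q \cdot n)) = \bar\phi(q \cdot \pi(n)) = q \cdot \bar\phi(\pi(n)) = q \cdot \iota(\bar\phi)(n),
\]
so $\iota(\bar\phi) \in \rm{H}^1(N, M)^Q$. Hence $\iota$ restricts to a map $\rm{H}^1(\bar N, M)^Q \to \rm{H}^1(N, M)^Q$, which remains injective as the restriction of an injection; functoriality in the subextension $\bar E$ gives the naturality claim.

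There is no substantive obstacle here; the only point requiring a moment of care is the compatibility of the two conjugation $Q$-actions under $\pi$, which is immediate once lifts are chosen consistently through $G \to \bar G$. (Alternatively, one could invoke the inflation–restriction sequence for $1 \to \Gal(E/\bar E) \to N \to \bar N \to 1$ together with $M^{\Gal(E/\bar E)} = M$, but the direct argument above is shorter and makes the $Q$-equivariance transparent.)
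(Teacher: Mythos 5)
Your proposal is correct and follows essentially the same route as the paper: the paper also defines $\iota(\bar\phi)$ as the composition of $\bar\phi$ with the surjective reduction map $N \to \bar N$, deduces injectivity from that surjectivity, and relies on the $Q$-invariance of $\bar N$ (i.e.\ compatibility of the conjugation actions) for invariants to be preserved. Your write-up simply makes explicit the $Q$-equivariance check via $G \to \bar G$ that the paper leaves implicit.
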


\begin{proof}
This is true because the action of $N$ on $M$ is trivial and $\bar N$ is $Q$-invariant. 
More explicitly, $\iota(\bar{\phi})$ is the composition of the surjective
reduction map $N \to \bar{N}$ with $\bar{\phi}$; if $\bar{\phi}$ is non-trivial, then so is $\iota(\bar{\phi})$.
\end{proof}

Associated with the exact sequence \eqref{Egaloisexactbar}, 
there is a differential map
\begin{equation} \label{Ed2bar}
\bar{d}_2: \rm{H}^1(\bar{N}, M)^{Q} \to \rm{H}^2(Q, M). 
\end{equation}

\begin{lemma} \label{Ld2d2bar}
Then $\bar{d}_2 = d_2 \circ \iota$ and ${\rm Ker}(\bar{d}_2) \subset {\rm Ker}(d_2)$.
\end{lemma}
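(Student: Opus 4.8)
The plan is to prove the statement $\bar{d}_2 = d_2 \circ \iota$ by a functoriality argument, using that the exact sequence \eqref{Egaloisexactbar} is a pushout quotient of \eqref{Egaloisexact}. First I would recall that the transgression maps $d_2$ and $\bar{d}_2$ arise from the Lyndon--Hochschild--Serre spectral sequences of the two group extensions, so they fit into a commutative ladder induced by the surjection $G \to \bar{G}$ (which restricts to the surjection $N \to \bar{N}$ described in Lemma \ref{LdecomposeH1} and is the identity on the common quotient $Q$). Since both spectral sequences compute cohomology with coefficients in the same $Q$-module $M$ (on which $N$, and hence $\bar{N}$, acts trivially), the differential $d_2$ commutes with the inflation-type maps on the $E_2$ pages. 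Concretely, the map $N \to \bar{N}$ induces the inclusion $\iota: \rm{H}^1(\bar{N}, M)^Q \hookrightarrow \rm{H}^1(N, M)^Q$ on the $E_2^{0,1}$ terms and the identity on $E_2^{2,0} = \rm{H}^2(Q, M)$; naturality of the spectral sequence differential then gives $d_2 \circ \iota = \bar{d}_2$.

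Alternatively, and perhaps more directly given the explicit setup in Section~\ref{Sclass}, I would argue at the cocycle level. Let $\omega \in \rm{H}^2(Q, N)$ classify \eqref{Egaloisexact} and let $\bar{\omega} \in \rm{H}^2(Q, \bar{N})$ classify \eqref{Egaloisexactbar}; since $\bar{G}$ is the quotient of $G$ by the image in $N$ of $\ker(N \to \bar{N})$, the class $\bar{\omega}$ is the image of $\omega$ under the map $\rm{H}^2(Q, N) \to \rm{H}^2(Q, \bar{N})$ induced by $N \to \bar{N}$. One can choose a $2$-cocycle $\tilde{\omega}: Q \times Q \to N$ representing $\omega$ and push it forward to $\tilde{\bar\omega}: Q \times Q \to \bar{N}$ representing $\bar{\omega}$. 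By the formula $d_2(\phi) = -\phi \circ \tilde{\omega}$ recalled in Section~\ref{Skummerobstruction}, for $\bar\phi \in \rm{H}^1(\bar{N}, M)^Q$ we have $\bar{d}_2(\bar\phi) = -\bar\phi \circ \tilde{\bar\omega}$, while $\iota(\bar\phi) = \bar\phi \circ (N \to \bar{N})$ and hence $d_2(\iota(\bar\phi)) = -(\bar\phi \circ (N\to\bar N)) \circ \tilde\omega = -\bar\phi \circ \tilde{\bar\omega}$. These agree, so $\bar{d}_2 = d_2 \circ \iota$. The inclusion ${\rm Ker}(\bar{d}_2) \subset {\rm Ker}(d_2)$ is then immediate: if $\bar\phi \in {\rm Ker}(\bar{d}_2)$ then $d_2(\iota(\bar\phi)) = \bar{d}_2(\bar\phi) = 0$, so $\iota(\bar\phi) \in {\rm Ker}(d_2)$; since $\iota$ is injective (Lemma \ref{LdecomposeH1}), we identify ${\rm Ker}(\bar{d}_2)$ with its image, a subspace of ${\rm Ker}(d_2)$.

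The main obstacle I anticipate is purely bookkeeping: verifying carefully that $\bar\omega$ really is the pushforward of $\omega$ along $N \to \bar{N}$, i.e. that the natural projection $G \to \bar{G}$ is compatible with the chosen set-theoretic sections so that the pushed-forward cocycle represents the correct extension class. This follows from the universal property characterization of $G$ as a pushout recalled after \eqref{Eexactresramlarge}, together with functoriality of the classifying map $\rm{H}^2(Q, -)$, but it deserves an explicit sentence. Everything else is formal. I would keep the proof short, invoking naturality of the LHS spectral sequence (or the cocycle formula $d_2(\phi) = -\phi \circ \tilde\omega$) as the one substantive input and citing Lemma \ref{LdecomposeH1} for injectivity of $\iota$.
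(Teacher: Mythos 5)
Your proposal is correct and follows essentially the same route as the paper: the paper's one-line proof invokes the description of $\iota(\bar{\phi})$ as $\bar{\phi}$ composed with the reduction $N \to \bar{N}$, which combined with the formula $d_2(\phi) = -\phi \circ \tilde{\omega}$ and the fact that the classifying cocycle of \eqref{Egaloisexactbar} is the pushforward of $\tilde{\omega}$ is exactly your second (cocycle-level) argument, spelled out. Your spectral-sequence naturality argument is a harmless variant of the same idea, and your worry about section compatibility is handled precisely as you suggest (compose a section $s:Q\to G$ with the quotient $G\to\bar{G}$).
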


\begin{proof}
This follows from the description of $\iota(\bar{\phi})$ in the proof of Lemma \ref{LdecomposeH1}.
\end{proof}

For the following types of extensions of $K$, we determine the element 
in $\rm{H}^2(Q, \bar{N})$ classifying the extension \eqref{Egaloisexactbar} and determine the resulting
subspace ${\rm Ker}(\bar{d}_2)$ of ${\rm Ker}(d_2)$:
ray class, cyclotomic, Kummer, Heisenberg, and $U_4$ extensions. 

\subsection{Information from ray class, cyclotomic, and Kummer extensions of $K$} \label{Sabelian}

\subsubsection{The ray class group of $K$}

Let $\Phi(G)=G^p[G,G]$ denote the Frattini subgroup of $G$.
Let $\tilde{L}$ be the fixed field of $E$ over $K$ by $\Phi(G)$.
By definition, $\tilde{G}={\rm Gal}(\tilde{L}/K)$ is the elementary abelian $p$-group $G/\Phi(G)$.
Also $\tilde{L}$ is the maximal elementary abelian $p$-group extension of $K$ ramified only over $\nu$.
Note that $L \subseteq \tilde{L}$.
Let $\rho \in \mathbb{Z}^{\geq 0}$ be such that $\mathrm{deg}(\tilde{L}/L)=p^\rho$.

Note that $\Phi(G) \subset N$ since $Q$ is an elementary abelian $p$-group.
Let $\tilde{N} = N/\Phi(G)$.
By definition, $\tilde{L}/L$ is Galois with group $\tilde{N} = {\rm Gal}(\tilde{L}/L)$.

By Lemma \ref{Ld2d2bar}, ${\rm Ker}(\tilde{d}_{2}) \subset {\rm Ker}(d_2)$, 
where $\tilde{d}_{2}: \rm{H}^1(\tilde{N}, M)^Q \to \rm{H}^2(Q, M)$.

\begin{proposition} \label{Praykerd2}
Suppose $M$ is a $G$-module on which $\tilde{N}$ acts trivially.
If $\mathrm{deg}(\tilde{L}/L)=p^\rho$, then
${\rm Ker}(\tilde{d}_{2}) = \rm{H}^1(\tilde{N}, M)^Q \simeq (M^Q)^\rho$. 
\end{proposition}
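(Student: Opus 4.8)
The plan is to specialize the general framework of Section~\ref{Salgebraker} to the extension \eqref{Egaloisexactbar} attached to $\bar E = \tilde L$, i.e.\ to $\bar G = \tilde G = G/\Phi(G)$ and $\bar N = \tilde N = N/\Phi(G)$, and to exploit that this is the simplest possible case: $\tilde G$ is an elementary abelian $p$-group by construction. First I would record the two structural facts that make Lemma~\ref{Lcentral} directly applicable. Since $\tilde G$ is abelian, the conjugation action of $Q$ on $\tilde N$ is trivial; and since $\tilde N$ is a quotient of the elementary abelian group $N$, it is itself elementary abelian, so $\deg(\tilde L/L)=p^\rho$ forces $\dim_{\mathbb{F}_p}(\tilde N)=\rho$. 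Granting the hypothesis that $\tilde N$ acts trivially on $M$, Lemma~\ref{Lcentral} then yields $\rm{H}^1(\tilde N, M)^Q \simeq (M^Q)^\rho$, which is the isomorphism asserted in the proposition. It therefore remains only to show that the transgression map $\tilde d_2 \colon \rm{H}^1(\tilde N, M)^Q \to \rm{H}^2(Q, M)$ of \eqref{Ed2bar} is identically zero.

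For this I would apply the algebraic criterion of Theorem~\ref{Tkerd2} to \eqref{Egaloisexactbar}. Choose any set-theoretic section $s\colon Q \to \tilde G$ normalized as in \eqref{Esectionextend} and form the associated elements $\bar a_i = s(\tau_i)^p$ and $\bar c_{j,k} = [\,s(\tau_k), s(\tau_j)\,]$ of $\tilde N$, exactly as in \eqref{Daaa}--\eqref{Dccc}. Because $\tilde G$ is elementary abelian, every $p$-th power in $\tilde G$ is trivial and every commutator in $\tilde G$ is trivial, so $\bar a_i = 0$ for all $i$ and $\bar c_{j,k} = 0$ for all $j < k$. Hence, for any $Q$-invariant homomorphism $\bar\phi\colon \tilde N \to M$, the two conditions of Theorem~\ref{Tkerd2} become $0 = -N_{\tau_i} m_i$ and $0 = (1-\tau_k)m_j - (1-\tau_j)m_k$, which are satisfied by the choice $m_0 = \cdots = m_r = 0$. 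Thus every $\bar\phi \in \rm{H}^1(\tilde N, M)^Q$ lies in ${\rm Ker}(\tilde d_2)$, giving ${\rm Ker}(\tilde d_2) = \rm{H}^1(\tilde N, M)^Q \simeq (M^Q)^\rho$. Equivalently, one may argue that \eqref{Egaloisexactbar} splits in this case, since a surjection $\tilde G \to Q$ of $\mathbb{F}_p$-vector spaces admits an $\mathbb{F}_p$-linear, hence group-theoretic, section; then the class in $\rm{H}^2(Q,\tilde N)$ classifying \eqref{Egaloisexactbar} vanishes and $\tilde d_2(\bar\phi) = -\bar\phi \circ \tilde\omega = 0$ for the cocycle choice $\tilde\omega = 0$, by the formula recalled at the end of Section~\ref{Skummerobstruction}.

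There is no substantial obstacle in this argument; essentially all the content is already contained in Lemmas~\ref{Lcentral}--\ref{Ld2d2bar} and Theorem~\ref{Tkerd2}. The one point deserving care is the vanishing of the $\bar a_i$ and $\bar c_{j,k}$ (equivalently, the splitting of \eqref{Egaloisexactbar}), which uses that $\tilde G$ is elementary abelian, not merely abelian. It is worth flagging this, because for the coarser subextensions of $E/K$ treated later in this section the relevant quotient of $G$ is no longer elementary abelian, and there the kernel is genuinely smaller — this is precisely what the computations in Propositions~\ref{Pcyckerd2} and \ref{Pkumkerd2} exhibit.
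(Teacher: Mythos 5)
Your proposal is correct and follows essentially the same route as the paper: Lemma~\ref{Lcentral} gives the isomorphism $\rm{H}^1(\tilde N, M)^Q \simeq (M^Q)^\rho$, and since $\tilde G$ is elementary abelian the images of the $a_i$ and $c_{j,k}$ in $\tilde N$ vanish, so the criterion of Theorem~\ref{Tkerd2} is satisfied with $m_i = 0$. Your added observation that \eqref{Egaloisexactbar} splits (so the classifying cocycle can be taken to be zero) is a harmless alternative phrasing of the same vanishing, not a different argument.
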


\begin{proof}
By Lemma \ref{Lcentral}, $\rm{H}^1(\tilde{N}, M)^Q \simeq (M^Q)^\rho$.
Since $\tilde{G}$ is an elementary abelian $p$-group, 
the images $\tilde{a}_i, \tilde{c}_{j,k} \in \tilde{N}$ of 
$a_i, c_{j,k} \in N$ are all trivial.  
The conditions in Theorem~\ref{Tkerd2} 
for $\phi$ to be in ${\rm Ker}(\tilde{d}_{2})$ are all satisfied, 
by taking $m_i =0$ for $0 \leq i \leq r$.
Thus ${\rm Ker}(\tilde{d}_{2}) = \rm{H}^1(\tilde{N}, M)^Q$.
\end{proof}

\begin{example}
Using Magma \cite{Magma}, we compute that $\rho = 0$ for $p < 37$ and $\rho =1$ for $p=37$, 
with the computation for $p \geq 23$ depending on the generalized Riemann hypothesis.
To see this, consider the ray class group of $K$ for the modulus $\mathfrak{m}=(1-\zeta_p)^i$.
By \cite[Theorem~1]{Nakagoshi} 
or \cite[Theorem~1]{Sengun}, 
the rank of its maximal elementary abelian $p$-group quotient stabilizes 
beyond some index $i$.  Also $i=e+\lfloor \frac{e}{p-1} \rfloor +1$, 
where $e=p-1$ is the ramification index of $\langle 1 - \zeta_p \rangle$ of $K$ above $p$, so $i=p+1$.
Then $\rho = \mathrm{dim}_{\mathbb{F}_p} \mathrm{Cl}_{(1-\zeta_p)^i}(K)[p] - (r+1)$.
\end{example}

\subsubsection{A cyclotomic extension of $K$} \label{Scyclotomic}

Let $v=\zeta_{p^3}$.
Let $w=v^p=\zeta_{p^2}$ and note that $w \in L$.
Let $L^*=L(v)$.
Let $r=(p-1)/2$.

\begin{lemma} \label{Lgaloiscyclotomic}
The extension $L^*/K$ is Galois and is ramified only over $\nu$.
The Galois group $G^*={\rm Gal}(L^*/K)$ is isomorphic to $\mathbb{Z}/{p^2} \mathbb{Z} \times (\mathbb{Z}/p\mathbb{Z})^r$.
\end{lemma}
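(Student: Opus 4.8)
The plan is to compute $L^* = L(\zeta_{p^3})$ by decomposing $L$ into the pieces $K_0 = K(\zeta_{p^2})$ and $K_i = K(\sqrt[p]{t_i})$ for $1 \le i \le r$ from Section \ref{Sfield}, and tracking how adjoining $v = \zeta_{p^3}$ interacts with each. First I would observe that $L^* = L(v) = K(\zeta_{p^3}, \sqrt[p]{t_1}, \ldots, \sqrt[p]{t_r})$, since $K_0 = K(\zeta_{p^2}) \subseteq K(\zeta_{p^3})$. The field $K(\zeta_{p^3}) = \mathbb{Q}(\zeta_{p^3})$ is abelian over $\mathbb{Q}$, hence Galois over $K$, with $\operatorname{Gal}(K(\zeta_{p^3})/K) \simeq \mathbb{Z}/p^2\mathbb{Z}$ (the kernel of $(\mathbb{Z}/p^3)^\times \to (\mathbb{Z}/p)^\times$ is cyclic of order $p^2$). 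Each $K_i$ is Galois over $K$ because $K$ contains $\zeta_p$, so $x^p - t_i$ splits over $K_i$. Thus $L^*$, being a compositum of Galois extensions of $K$, is Galois over $K$; and ramification only over $\nu = \langle 1 - \zeta_p\rangle$ follows from the same property for $\mathbb{Q}(\zeta_{p^3})/\mathbb{Q}$ (ramified only at $p$) and for each $K_i/K$ (by \cite[Lemma 3.3]{WINF:birs}), since a compositum of extensions unramified outside a set $S$ is unramified outside $S$.

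The structural claim $G^* \simeq \mathbb{Z}/p^2\mathbb{Z} \times (\mathbb{Z}/p\mathbb{Z})^r$ requires showing $K(\zeta_{p^3})$ and $K_1, \ldots, K_r$ are linearly disjoint over $K$, equivalently that $[L^* : K] = p^2 \cdot p^r = p^{r+2}$. One inclusion of extensions gives $[L^*:L] \mid p$ and $[L:K] = p^{r+1}$ (from Section \ref{Sgalois}, under Vandiver), so $[L^*:K] \le p^{r+2}$; the content is the lower bound $[L^*:L] = p$, i.e.\ $v = \zeta_{p^3} \notin L$. For this I would argue that $L/K$ is ramified only at $\nu$ with ramification index dividing $p^{r+1}$... more cleanly: the subextension $K_0 \cdots K_r = L$ has the property that $L \cap \mathbb{Q}(\zeta_{p^\infty}) = \mathbb{Q}(\zeta_{p^2})$, since adjoining $\zeta_{p^3}$ to $K$ is linearly disjoint from each Kummer piece $K_i = K(\sqrt[p]{t_i})$. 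The cleanest route is: if $\zeta_{p^3} \in L$, then $\mathbb{Q}(\zeta_{p^3}) \subseteq L$ is a cyclic degree-$p^2$ subextension of $Q = \operatorname{Gal}(L/K) \simeq (\mathbb{Z}/p\mathbb{Z})^{r+1}$, contradicting that $Q$ is elementary abelian. This immediately forces $[L^*:L] = p$, hence $[L^*:K] = p^{r+2}$, and then linear disjointness of $K(\zeta_{p^3})$ from $L$ over $K_0$ gives the direct product decomposition $G^* \simeq \operatorname{Gal}(K(\zeta_{p^3})/K) \times \operatorname{Gal}(L/K_0) \simeq \mathbb{Z}/p^2\mathbb{Z} \times (\mathbb{Z}/p\mathbb{Z})^r$, using that $Q/\langle\tau_0\rangle \simeq (\mathbb{Z}/p\mathbb{Z})^r$ is the Galois group of $L/K_0$.

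I expect the only real obstacle to be justifying that $\zeta_{p^3} \notin L$ rigorously, and more precisely that $\operatorname{Gal}(K(\zeta_{p^3})/K)$ sits inside $G^*$ as a direct factor rather than merely a subgroup — i.e.\ promoting the abstract splitting $1 \to \operatorname{Gal}(L^*/K(\zeta_{p^3})) \to G^* \to \operatorname{Gal}(K(\zeta_{p^3})/K) \to 1$ to a genuine internal direct product. The argument above via the elementary-abelian obstruction in $Q$ handles $\zeta_{p^3} \notin L$; for the direct-product structure one notes that $L/K$ and $K(\zeta_{p^3})/K$ are both abelian and that their intersection is exactly $K_0$, so $\operatorname{Gal}(L^*/K)$ is a subgroup of $\operatorname{Gal}(L/K) \times \operatorname{Gal}(K(\zeta_{p^3})/K)$ of the full order $p^{r+2}$, hence equals it. This is the place where Vandiver's conjecture enters (through $[L:K] = p^{r+1}$ and $Q$ being elementary abelian), and the rest is routine bookkeeping with compositum extensions.
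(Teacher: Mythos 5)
Your approach is essentially the paper's: its (two-sentence) proof just observes that $L^*$ is the compositum of the $\mathbb{Z}/p^2\mathbb{Z}$-extension $K(v)/K$ and the $\mathbb{Z}/p\mathbb{Z}$-extensions $K(\sqrt[p]{t_i})/K$, each ramified only over $\nu$ and mutually disjoint, and you supply exactly this decomposition with the details (Galois-ness, ramification in a compositum, and $\zeta_{p^3}\notin L$ via the exponent of $Q$) filled in. One step as written is off: $\operatorname{Gal}(L^*/K)$ does embed in $\operatorname{Gal}(L/K)\times\operatorname{Gal}(K(\zeta_{p^3})/K)$, but that product has order $p^{r+3}$, so ``a subgroup of the full order $p^{r+2}$, hence equals it'' does not parse, and linear disjointness of $K(\zeta_{p^3})$ from $L$ \emph{over} $K_0$ only identifies $\operatorname{Gal}(L^*/K_0)$, not $G^*$. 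The repair is immediate with the facts you already have: since $L\cap K(\zeta_{p^3})=K_0$, the image of $G^*$ is the fiber product over $\operatorname{Gal}(K_0/K)$, and because $Q=\langle\tau_0\rangle\times\operatorname{Gal}(L/K_0)$ with $\tau_0$ restricting to a generator of $\operatorname{Gal}(K_0/K)$, this fiber product is $\mathbb{Z}/p^2\mathbb{Z}\times(\mathbb{Z}/p\mathbb{Z})^r$; equivalently, as in the paper, note that $K(\zeta_{p^3})$ and $K(\sqrt[p]{t_1},\ldots,\sqrt[p]{t_r})$ are linearly disjoint over $K$ (a nontrivial intersection would contain $K_0$, contradicting the Vandiver-based disjointness of $K_0,\ldots,K_r$), so the degrees multiply and $G^*$ is the stated direct product.
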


\begin{proof}
This is because $L^*/K$ is the compositum of the $\mathbb{Z}/{p^2 \mathbb{Z}}$-extension
$K(v)/K$ and the $\mathbb{Z}/p \mathbb{Z}$-extensions $K(\sqrt[p]{t_i})/K$ where $t_i=1-\zeta_p^{-i}$ for $1 \leq i \leq r$. 
These extensions are disjoint and each ramified only over $\nu$.
\end{proof}

By Lemma \ref{Lgaloiscyclotomic}, $L \subset L^* \subset E$.  Then $N^* = {\rm Gal}(L^*/L)$ is a quotient of $N$.
Let $a_i^*, c_{j,k}^*$ denote the images of $a_i, c_{j,k} \in N$ in $N^*={\rm Gal}(L^*/L)$ for $0 \leq  i \leq r$ and $0 \leq j < k \leq r$.

\begin{lemma} \label{Lclasscyc}
With notation as above, $N^*$ is generated by $a_0^*$.
Also, $a_i^*$ is trivial for $1 \leq i \leq r$ and $c_{j,k}^*$ is trivial for $0 \leq j < k \leq r$.
\end{lemma}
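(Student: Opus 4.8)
The plan is to exploit the fact that $N^\ast = \operatorname{Gal}(L^\ast/L)$ is a quotient of $N$ on which $Q$ acts by conjugation, and to identify this quotient with $\operatorname{Gal}(K(v)/K(w))$ under the restriction map. First I would observe that, since $L^\ast = L(v)$ and $v = \zeta_{p^3}$ with $w = v^p = \zeta_{p^2} \in L$, the extension $L^\ast/L$ has degree $p$, so $N^\ast \simeq \mathbb{Z}/p\mathbb{Z}$; in particular $N^\ast$ is generated by the image of any single element of $N$ that acts nontrivially on $v$. The key point is then to decide \emph{which} of the generators $a_i^\ast, c_{j,k}^\ast$ is that element. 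Because $N^\ast$ has order $p$ and is abelian, every commutator $c_{j,k}^\ast$ is automatically trivial, so only the claims about the $a_i^\ast$ require work.

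Next I would analyze the conjugation action of $Q$ on $N^\ast$. Since $L^\ast/K$ is abelian by Lemma \ref{Lgaloiscyclotomic} (indeed $G^\ast \simeq \mathbb{Z}/p^2\mathbb{Z} \times (\mathbb{Z}/p\mathbb{Z})^r$), the group $Q = \operatorname{Gal}(L/K)$ acts \emph{trivially} by conjugation on $N^\ast$. Now recall from \eqref{Daaa} that $a_i = s(\tau_i)^p$; its image $a_i^\ast$ in $N^\ast$ is the $p$-th power $s^\ast(\tau_i)^p$ of a lift of $\tau_i$ to $G^\ast$. Here $\tau_i$ for $1 \le i \le r$ is the generator of $\operatorname{Gal}(K_i/K)$ with $K_i = K(\sqrt[p]{t_i})$, and the corresponding direct factor of $G^\ast$ is the $\mathbb{Z}/p\mathbb{Z}$-extension $K(\sqrt[p]{t_i})/K$; hence a lift $s^\ast(\tau_i)$ can be chosen inside this factor, and its $p$-th power is trivial in $G^\ast$. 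Therefore $a_i^\ast$ is trivial for $1 \le i \le r$. By contrast, $\tau_0$ corresponds to $\operatorname{Gal}(K_0/K) = \operatorname{Gal}(K(\zeta_{p^2})/K) = \operatorname{Gal}(K(w)/K)$, which sits inside the $\mathbb{Z}/p^2\mathbb{Z}$-factor $\operatorname{Gal}(K(v)/K)$ of $G^\ast$; a lift $s^\ast(\tau_0)$ is then a generator of that cyclic group of order $p^2$, so $s^\ast(\tau_0)^p$ generates its order-$p$ subgroup, which is exactly $N^\ast$. Thus $a_0^\ast$ generates $N^\ast$, and since $N^\ast$ has order $p$, every other $a_i^\ast$ and every $c_{j,k}^\ast$ lies in $\langle a_0^\ast\rangle$ but must in fact be trivial by the degree argument above.

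The step I expect to require the most care is the bookkeeping identifying the lift $s^\ast(\tau_i)$ with an element of the correct direct factor of $G^\ast$, because the section $s: Q \to G$ fixed in Section \ref{Sclass} need not respect the direct-product decomposition of $G^\ast$; one must pass to the quotient $G \twoheadrightarrow G^\ast$ and check that the composite $Q \to G \to G^\ast$ together with $s$ still yields, on each $\tau_i$, an element whose $p$-th power is computed in the right cyclic factor. This is a routine but slightly delicate compatibility: since $G^\ast$ is abelian and the extension $1 \to N^\ast \to G^\ast \to Q \to 1$ splits on each factor $\operatorname{Gal}(K_i/K)$ for $i \ge 1$ (as $K_i/K$ has degree exactly $p = |\operatorname{Gal}(K_i/K)|$), the $p$-th power of \emph{any} lift of $\tau_i$ with $i \ge 1$ is trivial in $G^\ast$, independent of the choice of section, which is all we need. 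Finally, the claim ``$N^\ast$ is generated by $a_0^\ast$'' then follows because $a_0^\ast$ is a nontrivial element of the order-$p$ group $N^\ast$.
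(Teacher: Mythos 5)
Your handling of $a_0^*$ and of $a_i^*$ for $1 \le i \le r$ is correct and is essentially the paper's argument in slightly more group-theoretic form: both rest on Lemma \ref{Lgaloiscyclotomic} (the group $G^*$ is abelian, a lift of $\tau_0$ has order $p^2$, lifts of $\tau_i$ have order $p$). Where the paper pins down $a_0^*$ by the explicit computation $e \equiv p+1 \bmod p^2$, $e^p \equiv 1+p^2 \bmod p^3$, hence $a_0^* = v^{p^2} = \zeta_p$, you instead use that a lift of $\tau_0$ generates the cyclic $\mathbb{Z}/p^2\mathbb{Z}$-factor, so its $p$-th power generates the unique order-$p$ subgroup, which is $N^*$; that is a legitimate shortcut. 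Your remark that the $p$-th power of a lift is independent of the section (any two lifts differ by an element of the exponent-$p$ group $N^*$, and $G^*$ is abelian) is the right way to dispose of the bookkeeping issue, but you should state it for $\tau_0$ as well, not only for $i \ge 1$; the same one-line argument applies.

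The one genuine flaw is your justification that the $c_{j,k}^*$ vanish ``because $N^*$ has order $p$ and is abelian.'' That is a non sequitur: $c_{j,k}^*$ is not a commutator formed inside $N^*$; it is the image under $G \to G^*$ of the commutator $[s(\tau_k), s(\tau_j)]$ of lifts, so its vanishing is a statement about commutators in $G^*$, not about the internal group structure of $N^*$. Indeed, in the Heisenberg situation of Section \ref{Sheisclass} the quotient $N_I$ is likewise cyclic of order $p$, yet $\bar{c}_{0,I}$ is nontrivial there (Proposition \ref{Ptrivialinquotient}), so your stated reason would ``prove'' something false in a directly parallel setting. The correct and immediate justification, which the paper uses and which you already have available from Lemma \ref{Lgaloiscyclotomic}, is that $G^*$ is abelian, so every commutator of lifts has trivial image in $G^*$. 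Your closing sentence, that the $c_{j,k}^*$ are trivial ``by the degree argument above,'' is similarly off target, since that splitting/degree argument concerns $p$-th powers of lifts of the $\tau_i$, not commutators. With this one repair the proof is complete and matches the paper's.
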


\begin{proof}
By Lemma \ref{Lgaloiscyclotomic}, 
$G^*={\rm Gal}(L^*/K)$ is abelian and generated by automorphisms 
$\tau_i^*$ whose image in $Q$ is $\tau_i$ for $0 \leq i \leq r$;
also $\tau_0^*$ has order $p^2$ and $\tau_i^*$ has order $p$ for $1 \leq i \leq r$.

In particular, $\tau_0^* (v)=v^e$ for some exponent $e \in \mathbb{Z}/p^3 \mathbb{Z}$ such that 
$p \nmid e$.
The relation $v^p=w$ implies that $\tau_0^*(w) = w^e$.  Thus $e \equiv p+1 \bmod p^2$.

By definition, $a_0^*= (\tau_0^*)^p(v)/v$.  Now $\tau_0^*(v) = v^{e^p}$.
The condition $e \equiv p+ 1 \bmod p^2$ implies that $e^p \equiv 1+p^2 \bmod p^3$.
Thus $a_0^* = v^{e^p-1} = v^{p^2}= \zeta_p$.  Thus $a_0^*$ is non-trivial and thus generates $N^*$.

By definition, $c_{j,k}^* = \tau_k^* \tau_j^* (\tau_k^*)^{-1}  (\tau_j^*)^{-1} (v)/v$.
Since $G^*$ is abelian, $c_{j,k}^*$ is trivial.

For $1 \leq i \leq r$, by definition, $a_i^* = (\tau_i^*)^p(v)/v$.  
Since $\tau_i^*$ has order $p$, $a_i^*$ is trivial.
\end{proof}

\begin{proposition} \label{PCYC}
If $M$ is a $G$-module on which $N^*$ acts trivially,
then $\rm{H}^1(N^*, M)^Q \simeq M^Q$. 
\end{proposition}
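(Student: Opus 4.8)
The plan is to deduce this from Lemma \ref{Lcentral}, applied to the subfield $\bar E = L^*$ together with its associated exact sequence \eqref{Egaloisexactbar}, in which $\bar G = G^* = {\rm Gal}(L^*/K)$ and $\bar N = N^* = {\rm Gal}(L^*/L)$. To invoke Lemma \ref{Lcentral} I need two inputs: that the conjugation action of $Q$ on $N^*$ is trivial, and that $\rho := \dim_{\mathbb{F}_p}(N^*)$ equals $1$. The hypothesis that $N^*$ acts trivially on $M$ is exactly what lets us regard an element of $\rm{H}^1(N^*, M)$ as a homomorphism $N^* \to M$, matching the setup of that lemma.

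First I would verify the conjugation claim. By Lemma \ref{Lgaloiscyclotomic}, $G^*$ is abelian. The conjugation action of $Q = G^*/N^*$ on the normal subgroup $N^*$ is induced by conjugation inside $G^*$, hence it is trivial. Next I would compute $\rho$: by Lemma \ref{Lclasscyc}, $N^*$ is generated by the single element $a_0^*$, and $a_0^* = \zeta_p \neq 1$. Since $N^*$ is a quotient of the elementary abelian $p$-group $N$, it is itself elementary abelian, so $N^* \simeq \mathbb{Z}/p\mathbb{Z}$ and $\rho = 1$.

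With these two facts in hand, Lemma \ref{Lcentral} gives $\rm{H}^1(N^*, M)^Q \simeq (M^Q)^\rho = M^Q$, which is exactly the assertion. I do not anticipate any real obstacle here: all the substantive work is already contained in Lemmas \ref{Lgaloiscyclotomic}, \ref{Lclasscyc}, and \ref{Lcentral}, and the proposition is a short bookkeeping consequence of combining them. The one point worth stating explicitly is the identification of the $Q$-action in the hypothesis of Lemma \ref{Lcentral} with conjugation inside $G^*$, which is immediate once one notes $\bar G = G^*$ in this case.
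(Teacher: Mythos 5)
Your proposal is correct and follows essentially the same route as the paper: the paper's proof also combines Lemma \ref{Lcentral} with Lemma \ref{Lclasscyc} (which gives that $N^*$ is generated by $a_0^*$, so $\rho=1$), identifying $\bar{\phi}^*$ with its value $\mu^*=\bar{\phi}^*(a_0^*)\in M^Q$. Your explicit verification that the $Q$-conjugation action on $N^*$ is trivial via the abelianity of $G^*$ (Lemma \ref{Lgaloiscyclotomic}) is exactly the implicit step the paper relies on.
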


\begin{proof}
This follows from Lemma \ref{Lcentral} and Lemma \ref{Lclasscyc},
with the isomorphism $\rm{H}^1(N^*, M)^Q \simeq M^Q$
identifying $\bar{\phi}^*$ with the value
$\mu^*=\bar{\phi}^*(a_0^*) \in M^Q$.
\end{proof}

By Lemma \ref{Ld2d2bar}, ${\rm Ker}(d_{2}^*) \subset {\rm Ker}(d_2)$ where $d_{2}^*: \rm{H}^1(N^*, M)^Q \to \rm{H}^2(Q, M)$.

\begin{proposition} \label{Pcyckerd2}
Let $X$ be the Fermat curve of degree $p$.
Suppose $M$ is a subquotient of the relative homology $\rm{H}_1(U, Y; \mathbb{Z}/p \mathbb{Z})$.
\begin{enumerate}
\item If $p \geq 5$, then ${\rm Ker}(d_2^*) =0$.  The same is true for $p=3$ when $M=\rm{H}_1(X; \mathbb{Z}/3 \mathbb{Z})$. 
\item If $p =3$ and $M=\rm{H}_1(U; \mathbb{Z}/3 \mathbb{Z})$ or $M=\rm{H}_1(U,Y; \mathbb{Z}/3 \mathbb{Z})$, then
${\rm Ker}(d_2^*)$ has dimension $1$ and is generated by the homomorphism $\phi^*:N^* \to M$
such that $\phi^*(a_0^*)=y_0^2y_1^2$.
\end{enumerate}
\end{proposition}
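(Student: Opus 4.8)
The plan is to apply Theorem~\ref{Tkerd2} to the quotient $N^*$ via Lemma~\ref{Ld2d2bar}, using that ${\rm Ker}(d_2^*)$ is exactly the set of $Q$-invariant homomorphisms $\phi^*:N^* \to M$ satisfying conditions (1) and (2) of Theorem~\ref{Tkerd2} for the images $a_i^*, c_{j,k}^*$. By Lemma~\ref{Lclasscyc}, $c_{j,k}^*$ is trivial and $a_i^*$ is trivial for $1 \leq i \leq r$, so conditions (2) and condition (1) for $i \geq 1$ impose nothing beyond requiring $\phi^*(c_{j,k}^*) = 0$ and $\phi^*(a_i^*) = 0$, which hold automatically; the only genuine constraint is condition (1) for $i=0$, namely that there exists $m_0 \in M$ with $\phi^*(a_0^*) = -N_{\tau_0} m_0$. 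By Proposition~\ref{PCYC}, $\phi^*$ is determined by $\mu^* = \phi^*(a_0^*) \in M^Q$, so ${\rm Ker}(d_2^*)$ is identified with the subspace $\{\mu^* \in M^Q : \mu^* \in N_{\tau_0} M\}$, i.e. with $M^Q \cap N_{\tau_0} M$ (where $N_{\tau_0}$ acts on $M$ through $B_0 \in \Lambda_1$, so $N_{\tau_0}$ here means the norm element of $B_0$ as in Theorem~\ref{Tnorm}).

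First I would dispatch part (1). When $p \geq 5$, Theorem~\ref{Tnorm} gives $N_{\tau_0} = 0$ on all of $M = \rm{H}_1(U,Y;\mathbb{Z}/p\mathbb{Z})$, hence $N_{\tau_0} M' = 0$ for every subquotient $M'$; thus the only $\mu^*$ allowed is $\mu^* = 0$, and ${\rm Ker}(d_2^*) = 0$. When $p = 3$ and $M = \rm{H}_1(X;\mathbb{Z}/3\mathbb{Z})$, I would argue that $N_{\tau_0}$ also vanishes on this quotient: indeed $N_{\tau_0} = y_0^2 y_1^2$ on $\rm{H}_1(U,Y)$ by Theorem~\ref{Tnorm}, and $y_0^2 y_1^2$ lies in the submodule killed when passing to $\rm{H}_1(X)$ — concretely, $\rm{H}_1(X)$ is the quotient of $\rm{H}_1(U) = \langle y_0 y_1\rangle$ by ${\rm Stab}(\epsilon_0\epsilon_1)$ (Section~\ref{Shomology}), and $y_0^2 y_1^2 \cdot \beta$ maps into this stabilizer, so its image in $\rm{H}_1(X)$ is zero. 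Hence again $N_{\tau_0} M = 0$ and ${\rm Ker}(d_2^*) = 0$.

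For part (2), with $p = 3$ and $M$ either $\rm{H}_1(U;\mathbb{Z}/3\mathbb{Z}) = \langle y_0 y_1\rangle \beta$ or $\rm{H}_1(U,Y;\mathbb{Z}/3\mathbb{Z}) = \Lambda_1 \beta$, I would use the explicit $\Lambda_1$-module structure: here $N_{\tau_0}$ acts as multiplication by $y_0^2 y_1^2$, and one checks that $y_0^2 y_1^2 \beta$ is a nonzero element lying in $M^Q$ (it is fixed by $\tau_0$ and $\tau_1$ because $(B_i - 1) \in \langle y_0 y_1\rangle$, so $(B_i - 1) y_0^2 y_1^2 = 0$ as it involves a factor of $y_0^3$ or $y_1^3$). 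So $\mu^* = y_0^2 y_1^2 \beta$ is admissible (take $m_0 = \beta$, giving $\phi^*(a_0^*) = -N_{\tau_0}\beta = y_0^2 y_1^2 \beta$ over $\mathbb{F}_3$), showing ${\rm Ker}(d_2^*) \neq 0$. Conversely I must show it is at most one-dimensional: the admissible $\mu^*$ must lie in $N_{\tau_0} M = \mathbb{F}_3 \cdot y_0^2 y_1^2 \beta$, which is a line, so ${\rm Ker}(d_2^*)$ has dimension exactly $1$, generated by the stated $\phi^*$.

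The main obstacle is the bookkeeping in part (2) and the $\rm{H}_1(X)$ case of part (1): one needs the precise description of how $N_{\tau_0} = y_0^2 y_1^2$ sits inside each of the three modules $\rm{H}_1(X), \rm{H}_1(U), \rm{H}_1(U,Y)$ and their subquotients, and that $y_0^2 y_1^2 \beta$ is both nonzero and $Q$-fixed in the latter two but becomes zero in $\rm{H}_1(X)$. This is a finite check with the group ring $\Lambda_1 = \mathbb{F}_3[y_0,y_1]/(y_0^3, y_1^3)$ and the relation $B_i - 1 \in \langle y_0 y_1\rangle$ from \cite{Anderson}, but it is where all the content lies; everything else is a formal consequence of Theorem~\ref{Tkerd2}, Proposition~\ref{PCYC}, and Theorem~\ref{Tnorm}.
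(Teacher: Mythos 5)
Your overall route is the same as the paper's: use Lemma \ref{Lclasscyc} (via Proposition \ref{PCYC}) to reduce everything to the single value $\mu^*=\phi^*(a_0^*)\in M^Q$, and then let Theorem \ref{Tkerd2} together with Theorem \ref{Tnorm} decide which $\mu^*$ are allowed. Part (1) is fine, since there only the necessity direction of Theorem \ref{Tkerd2} is used: for $p\ge 5$ the norm operator $N_{\tau_0}$ is zero on $M$, and for $p=3$ the element $y_0^2y_1^2$ dies in $\rm{H}_1(X;\mathbb{Z}/3\mathbb{Z})$ (your remark that $y_0^2y_1^2$ lies in ${\rm Stab}(\epsilon_0\epsilon_1)$ justifies what the paper merely asserts). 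Two smaller points on part (2): condition (2) of Theorem \ref{Tkerd2} is not vacuous just because $c_{j,k}^*=0$ --- it demands that the \emph{same} tuple $(m_0,\dots,m_r)$ satisfy $(1-\tau_k)m_j=(1-\tau_j)m_k$, so in the sufficiency direction you must also exhibit $m_1$ with $(1-\tau_0)m_1=(1-\tau_1)m_0$; for $M=\rm{H}_1(U,Y;\mathbb{Z}/3\mathbb{Z})$ this is possible because $1-B_{\sigma}$ equals $y_0y_1$ times a unit of $\Lambda_1$, so ${\rm Im}(1-\tau_0)=\langle y_0y_1\rangle\supseteq {\rm Im}(1-\tau_1)$, but it should be said. (The sign in ``take $m_0=\beta$'' is off by $-1$, which is harmless since only the span matters.)

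The genuine gap is the case $M=\rm{H}_1(U;\mathbb{Z}/3\mathbb{Z})$ of part (2). There $\beta\notin M$, so your witness $m_0=\beta$ is not available; worse, since $N_{\tau_0}=y_0^2y_1^2$ annihilates $\langle y_0y_1\rangle$, with $m_0$ constrained to lie in $M$ (as Theorem \ref{Tkerd2} requires) your identification ${\rm Ker}(d_2^*)\simeq M^Q\cap N_{\tau_0}M$ evaluates to $0$ for $\rm{H}_1(U)$ --- that is, your own reduction produces the opposite of the statement you are trying to prove in that case, and your claim ``$N_{\tau_0}M=\mathbb{F}_3\cdot y_0^2y_1^2\beta$'' is only valid for $M=\rm{H}_1(U,Y)$. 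The paper's proof is terser at exactly this point: it tests only whether the element $y_0^2y_1^2$ is nonzero in $M$, a criterion that agrees with yours for $\rm{H}_1(U,Y)$ but diverges from it for $\rm{H}_1(U)$. Whichever criterion is the right one, the $\rm{H}_1(U)$ case cannot be dispatched the way you do: you either need a separate argument for that module (or an explanation of why the witnesses $m_i$ may be taken in the larger module $\rm{H}_1(U,Y)$ rather than in $M$), and as written your proposal does not establish that ${\rm Ker}(d_2^*)$ is one-dimensional there.
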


\begin{proof}
A $Q$-invariant morphism 
$\phi^*: N^* \to M$ is determined by its image on the generator $a_0^*$ of $N^*$.
Theorem \ref{Tkerd2} contains the conditions for $\phi^*$ to be in ${\rm Ker}(d_2^*)$.
By Theorem \ref{Tnorm}, $N_{\tau_i}=0$ for $1 \leq i \leq r$.
Since $c_{j,k}^*=0$, and $a_i^*=0$ for $1 \leq i \leq r$, these conditions are satisfied 
if and only if $\phi^*(a_0^*)= - N_{\tau_0} m_0$ for some $m_0 \in M$.
If $p \geq 5$ then $N_{\tau_0}=0$.  
If $p=3$, then $N_{\tau_0}=y_0^2y_1^2$, which is trivial in $\rm{H}_1(X; \mathbb{Z}/3 \mathbb{Z})$, 
but not in $\rm{H}_1(U; \mathbb{Z}/3 \mathbb{Z})$ or $\rm{H}_1(U,Y; \mathbb{Z}/3 \mathbb{Z})$.
\end{proof}

The theory of higher ramification groups for a ramified prime in an extension of number fields 
can be found in \cite[Chapter 4]{SerreLF}.
For ramification of order $p^e$, there are $e$ jumps in the filtration which can be indexed in either the 
upper or lower numbering.  The first jump is the same in both numbering systems
and the conductor is one more than the last lower jump.  

\begin{lemma} \label{Lcyclocond}
When $p$ is a regular prime, then the conductor of $L^*/L$ is $p^3-2p^2+2p$.
\end{lemma}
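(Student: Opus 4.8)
The plan is to work locally at the prime above $p$ and pin down the single ramification break of $L^*/L$.

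First I would reduce to a break computation. By Proposition~\ref{Prational} there is a unique prime $\mathfrak p$ of $L$ above $p$; since $L/\mathbb Q$ is unramified outside $p$ and $\zeta_{p^2}$ and the elements $\sqrt[p]{t_i}$ all reduce to $0$ or $1$ modulo $\mathfrak p$, the residue field at $\mathfrak p$ is $\mathbb F_p$ and $L/\mathbb Q$ is totally ramified there of degree $d=(p-1)p^{r+1}$; in particular $v_{\mathfrak p}(p)=d$. The extension $L^*/L$ is cyclic of degree $p$, and since $\zeta_{p^3}\equiv 1$ modulo the maximal ideal it has residue degree $1$, hence is totally ramified at $\mathfrak p$. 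So its conductor is $\mathfrak p^{\,b+1}$ with $b$ the unique ramification break, which agrees in the upper and lower numbering because $[L^*:L]=p$; thus the lemma amounts to $b=p^3-2p^2+2p-1$.

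Next I would use Kummer theory. As $\zeta_p\in L$ and $\zeta_{p^3}^{\,p}=\zeta_{p^2}$, we have $L^*=L\bigl(\sqrt[p]{\zeta_{p^2}}\,\bigr)$. Set $F=L_{\mathfrak p}$, so $e_F:=v_{\mathfrak p}(p)=d$ and $\tfrac{p\,e_F}{p-1}=p^{\,r+2}$. A standard computation of higher ramification for degree-$p$ Kummer extensions (in the spirit of \cite[Ch.~IV]{SerreLF}) gives that, when $F\bigl(\sqrt[p]{u}\,\bigr)/F$ is ramified for a unit $u$, its break equals $\tfrac{p\,e_F}{p-1}-i_0$, where $i_0=\max\{\,i:\ u\in (F^\times)^p(1+\mathfrak p^{\,i})\,\}$ (this maximum is automatically prime to $p$ in the ramified case). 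So the lemma reduces to the claim that for $u=\zeta_{p^2}$ one has $i_0=p^{\,r+2}-(p-1)(p^2-p+1)$, an integer $\equiv 1\pmod p$; then $b=p^{\,r+2}-i_0=(p-1)(p^2-p+1)=p^3-2p^2+2p-1$, and the conductor exponent is $b+1=p^3-2p^2+2p$.

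The heart of the proof is the computation of $i_0$ for $u=\zeta_{p^2}$. Since $1-\zeta_{p^2}$ is a uniformizer of $K_0=\mathbb Q(\zeta_{p^2})$ and $e(\mathfrak p/\mathfrak p_{K_0})=[L:K_0]=p^{\,r}$, one starts from $v_{\mathfrak p}(\zeta_{p^2}-1)=p^{\,r}$, a multiple of $p$. Because the residue field $\mathbb F_p$ is perfect, $\zeta_{p^2}$ can be modified repeatedly by $p$-th powers so as to increase strictly the valuation of its difference with $1$; I would track this valuation exactly, using the binomial expansion of $p$-th powers together with the cyclotomic identities $1-\zeta_p^{\,j}=(1-\zeta_{p^2}^{\,j})\sum_{k=0}^{p-1}\zeta_{p^2}^{\,jk}$ that relate uniformizers at levels $p$ and $p^2$, and show the process stabilizes at the asserted $i_0$. (Alternatively, one can feed $\zeta_{p^2}$ into an explicit reciprocity law for the Hilbert symbol on $F$, which rephrases the conductor exponent as a divisibility condition involving $v_{\mathfrak p}(p)$, the different $\mathfrak d_{F/\mathbb Q_p}$, and $v_{\mathfrak p}(\zeta_{p^2}-1)$, and then evaluate $v_{\mathfrak p}(\mathfrak d_{F/\mathbb Q_p})$ from the ramification of $L/\mathbb Q$.) This last step — keeping exact control of $\zeta_{p^2}$ modulo $p$-th powers through the successive normalizations — is where the real work lies; the rest is formal once Proposition~\ref{Prational} supplies the unique prime $\mathfrak p$ and the total ramification of $L/\mathbb Q$ there.
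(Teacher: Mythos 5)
Your framework is sound: since $\zeta_p\in L$ and $\zeta_{p^3}^p=\zeta_{p^2}$, the extension $L^*/L$ is the degree-$p$ Kummer extension $L(\sqrt[p]{\zeta_{p^2}})/L$, it is totally ramified at the unique prime $\mathfrak{p}$, its conductor exponent is $b+1$ with $b$ the unique break, and the standard break formula $b=\tfrac{pe_F}{p-1}-i_0$ with $i_0=\max\{i:\zeta_{p^2}\in (F^\times)^p(1+\mathfrak{p}^i)\}$, $F=L_{\mathfrak{p}}$, $\tfrac{pe_F}{p-1}=p^{r+2}$, is correct. This is a genuinely different route from the paper, which never touches $i_0$: it climbs the tower $K\subset K(w)\subset L^*$ using the known breaks of $K(w)/K$ (namely $p-1$), $K(\sqrt[p]{t_i})/K$ (namely $p$), and $K(v)/K(w)$ (namely $p^2-1$), converts between upper and lower numbering with Herbrand's formula, and reads off the break of $L^*/L$ by restriction of the filtration to the subgroup ${\rm Gal}(L^*/L)$.

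The gap is that the heart of your argument is missing: you never compute $i_0$. You assert the target value $i_0=p^{r+2}-(p-1)(p^2-p+1)$ (visibly reverse-engineered from the statement) and then defer the actual work to an unspecified process of ``modifying $\zeta_{p^2}$ by $p$-th powers and tracking valuations,'' which you yourself identify as where the real work lies. Since $v_{\mathfrak{p}}(\zeta_{p^2}-1)=p^{r}$ is divisible by $p$, one must carry out several successive normalizations and prove exactly where the process terminates; no argument is given for why it stabilizes at the asserted value, and this stabilization point is precisely the content of the lemma. Everything else in your write-up is standard formalism, so as it stands the lemma is not proved. A further caution if you do carry out the computation: redoing the paper's tower argument carefully, the Herbrand stretching factor at the second step of $L^*/K(w)$ is the index $p^{r}$ of the surviving ramification subgroup, which yields a break $2p-1+p^{r+1}(p-2)$; this coincides with the stated $p^3-2p^2+2p-1$ exactly when $p=3$ (the only case used later in the paper, where the conductor is $15$), so for $p\geq 5$ you should not expect to be able to verify the asserted value of $i_0$ literally, and you would need to reconcile your target with what the local computation actually gives.
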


\begin{proof}
The facts in this proof about jumps in ramification filtrations can be found in  \cite[Chapter 4]{SerreLF}.
The extension $K(w)/K$ has jump $p-1$.  
The extension $K(\sqrt[p]{t_i})/K$ has jump $p$ for $1 \leq i \leq r$.
Let $K^{\circ}=K(\sqrt[p]{t_1}, \ldots \sqrt[p]{t_r})$. 
Then $L=K(w) K^{\circ}$.

When $p$ is a regular prime, then there is a unique prime of $L$ above $p$ by Proposition \ref{Prational}.
It is totally ramified since the residue field degrees of $K(w)/K$ and $K(\sqrt[p]{t_i})/K$ are all trivial.
So $L/K$ has $r+1$ upper jumps, and they are $u_1=p-1$ and $u_2=p$ (with multiplicity $r$). 
By Herbrand's formula, the lower jumps $j_1$ and $j_2$ satisfy $j_2-j_1=p(u_2-u_1)$.  
So  $L/K$ has lower jumps $j_1=p-1$ and $j_2=2p-1$ (with multiplicity $r$). 

Thus $L/K(w)$ has lower jump $2p-1$ with multiplicity $r$.
The jump of $K(v)/K(w)$ is $p^2-1$.
Note that $L^* = K(v) K^{\circ}$.  
So $L^*/K(w)$ has upper jumps $U_1=2p-1$ (with multiplicity $r$) and $U_2=p^2-1$. 
By Herbrand's formula, this has lower jumps $J_1=2p-1$ (with multiplicity $r$) and $J_2 =p^3-2p^2+2p-1$.
Thus $L^*/L$ has jump $J_2$ and conductor $J_2+1$.
\end{proof}

We use Lemma \ref{Lcyclocond} in Notation \ref{NgenerateN} and Remark \ref{Ridentify}
to identify the non-trivial homomorphism $\phi^* \in {\rm Ker}(d_2^*)$
in ${\rm Ker}(d_2)$ when $p=3$, in which case the conductor of $L^*/L$ is $15$.

\subsubsection{Kummer extensions}

In this section, we show that some other Kummer extensions of $L$ do not increase the dimension of ${\rm Ker}(d_2)$.

Let $K_0=\mathbb{Q}(\zeta_{p^2})$ and $K_0^* = \mathbb{Q}(\zeta_{p^3})$, 
From Section \ref{Scyclotomic},
$\tau_0$ lifts to an automorphism $\tau_0^*$ of $K_0^*$ such that $\tau_0^*(\zeta_{p^3}) = \zeta_{p^3}^e$ 
for some integer $e$ such that $e \equiv p+1 \bmod p^2$.  Also $\tau_0^*(\zeta_{p^2}) = \zeta_{p^2}^e$. 

Let $F^*_i = K(\sqrt[p^2]{t_i})$ where $t_i = 1 - \zeta_p^{-i}$.
Note that $F^*_i/K$ is not Galois, but $F^*_i K_0$ is Galois over $K_0$.
Also $F^*_i$ is ramified only over $\nu=\langle 1 - \zeta_p \rangle$.

Let $F^*$ be the compositum of $F^*_i$ for $0 \leq i \leq r$.
Let $\tilde{G}^*={\rm Gal}(F^*/K)$.  Then $\tilde{G}^*$ is generated by the lifts $\tau_i^*$ of $\tau_i$, 
each of which has order $p^2$.
Let $G^\circ$ be the subgroup of $\tilde{G}^*$ generated by $\tau_i^*$ for $1 \leq i \leq r$.
Since $K_0^*/K$ is Galois, $G^\circ$ is normal and
there is a short exact sequence
\[1 \to G^\circ \to \tilde{G}^* \to \langle \tau_0^* \rangle \to 1.\]
For $1 \leq i \leq r$, the conjugate of $\tau_i^*$ by $\tau_0^*$ is $(\tau_i^*)^e$ because
\[\tau_0^* \tau_i^* (\tau_0^*)^{-1} (\sqrt[p^2]{t_i}) = \tau_0^*(\zeta_{p^2} \sqrt[p^2]{t_i})=
\zeta_{p^2}^e \sqrt[p^2]{t_i}= (\tau_i^*)^e (\sqrt[p^2]{t_i}).\]

Note that $F^*$ is an elementary abelian $p$-group extension of $L$
which is ramified only above $p$.
So $\bar{N}^*={\rm Gal}(F^*/L)$ is a quotient of $N$.
Let $\bar{a}_i^*$, $\bar{c}_{j,k}^*$ denote the images of $a_i$, $c_{j,k}$ in $\bar{N}^*$.

\begin{proposition} \label{Pkumkerd2}
With notation as above:
\begin{enumerate}
\item $\bar{N}^* \simeq (\mathbb{Z}/p \mathbb{Z})^{r+1}$ and a basis for $\bar{N}^*$ is given by
$\bar{a}_i^*$ for $0 \leq i \leq r$;
\item if $j \not = 0$, then $\bar{c}_{j,k}^*$ is trivial; if $j = 0$, then $\bar{c}_{0,k}^* = \bar{a}_k^*$.
\item If $M$ is a subquotient of the relative homology of the Fermat curve, 
then the kernel of $\bar{d}_2^*$ on $\bar{N}^*$ equals ${\rm Ker}(d_2^*)$ from Proposition \ref{Pcyckerd2}.
\end{enumerate}
\end{proposition}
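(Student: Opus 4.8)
The plan is to pin down the abstract group structure of $\bar{G}^*={\rm Gal}(F^*/K)$ and of $\bar{N}^*={\rm Gal}(F^*/L)$ from the data given just before the proposition (the generators $\tau_i^*$, their order $p^2$, and the conjugation relation $\tau_0^*\tau_i^*(\tau_0^*)^{-1}=(\tau_i^*)^e$ with $e\equiv p+1\bmod p^2$), then to read off $\bar{a}_i^*$ and $\bar{c}_{j,k}^*$, and finally to substitute these into Theorem~\ref{Tkerd2} applied to \eqref{Egaloisexactbar} and compare with the proof of Proposition~\ref{Pcyckerd2}. Two generalities are used repeatedly: $\bar{N}^*$ is a quotient of $N$, hence an elementary abelian $p$-group; and $F^*/L$ is generated by the $r+1$ radicals $\zeta_{p^3}=\sqrt[p]{\zeta_{p^2}}$ and $\sqrt[p^2]{t_i}=\sqrt[p]{\sqrt[p]{t_i}}$ ($1\le i\le r$) of elements of $L$, each of exponent $p$, so $[F^*:L]\le p^{r+1}$.

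For part (1), I first observe that $(\tau_i^*)^p\in\bar{N}^*$ for $0\le i\le r$, since each $(\tau_i^*)^p$ fixes $\zeta_{p^2}$ and every $\sqrt[p]{t_i}$ and hence fixes $L$. These $r+1$ elements are $\mathbb{F}_p$-independent: testing a relation $\prod_i((\tau_i^*)^p)^{a_i}=1$ on $\sqrt[p^2]{t_j}$ for $1\le j\le r$ forces $a_j\equiv0\bmod p$, and testing it on $\zeta_{p^3}$, where $(\tau_0^*)^p$ acts by $\zeta_{p^3}\mapsto\zeta_{p^3}^{e^p}=\zeta_{p^3}^{1+p^2}$ (the congruence from Lemma~\ref{Lclasscyc}), forces $a_0\equiv0\bmod p$. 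Hence $[F^*:L]=p^{r+1}$ and $\{(\tau_i^*)^p\}$ is a basis of $\bar{N}^*$. Next I check that each $(\tau_i^*)^p$ is fixed under conjugation by every generator $\tau_j^*$: for $i,j\ge1$ and for $i=j$ this is immediate; for $j=0,\ i\ge1$ it follows from $\tau_0^*(\tau_i^*)^p(\tau_0^*)^{-1}=(\tau_i^*)^{ep}=(\tau_i^*)^p$, using $e\equiv1\bmod p$; and for $i=0,\ j\ge1$ one checks on the generators $\zeta_{p^3},\sqrt[p^2]{t_\ell}$ that $\tau_j^*$ commutes with $(\tau_0^*)^p$. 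Thus $\bar{N}^*$ is central in $\bar{G}^*$, so $Q$ acts trivially on it. Finally, the image of the section $s$ is a lift $\bar{s}(\tau_i)=\tau_i^*\,n_i$ with $n_i\in\bar{N}^*$, and centrality together with $n_i^p=1$ gives $\bar{a}_i^*=\bar{s}(\tau_i)^p=(\tau_i^*)^p$, which is the asserted basis.

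For part (2), centrality again gives $\bar{c}_{j,k}^*=[\bar{s}(\tau_k),\bar{s}(\tau_j)]=[\tau_k^*,\tau_j^*]$. For $j\ge1$ the generators commute, so $\bar{c}_{j,k}^*$ is trivial. For $j=0$, the relation $\tau_0^*\tau_k^*(\tau_0^*)^{-1}=(\tau_k^*)^e$ gives $\bar{c}_{0,k}^*=[\tau_k^*,\tau_0^*]=(\tau_k^*)^{1-e}$, and since $1-e\equiv -p\bmod p^2$ this equals $((\tau_k^*)^p)^{\pm1}=\bar{a}_k^*$ (the sign depends on the normalization of $e$ and is immaterial below). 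For part (3), the tower $L\subset L^*\subset F^*$ yields a surjection $\bar{N}^*\twoheadrightarrow N^*={\rm Gal}(L^*/L)$, and since $F^*=L^*(\sqrt[p^2]{t_1},\dots,\sqrt[p^2]{t_r})$ and $(\tau_i^*)^p$ acts on $\sqrt[p^2]{t_i}$ by $\zeta_p$ while fixing $\zeta_{p^3}$, its kernel is exactly $\langle\bar{a}_1^*,\dots,\bar{a}_r^*\rangle$. Now take $\bar{\phi}^*\in{\rm Ker}(\bar{d}_2^*)$ and apply Theorem~\ref{Tkerd2} to \eqref{Egaloisexactbar}: condition (1), together with Theorem~\ref{Tnorm} giving $N_{\tau_i}=0$ for $1\le i\le r$, forces $\bar{\phi}^*(\bar{a}_i^*)=0$ for $1\le i\le r$, so $\bar{\phi}^*$ descends to a $Q$-invariant $\phi^*:N^*\to M$ with $\phi^*(a_0^*)=\bar{\phi}^*(\bar{a}_0^*)$. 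Substituting parts (1)--(2), the remaining conditions of Theorem~\ref{Tkerd2} become exactly $\phi^*(a_0^*)=-N_{\tau_0}m_0$ and $0=(1-\tau_k)m_j-(1-\tau_j)m_k$ for $0\le j<k\le r$ --- the very conditions characterizing ${\rm Ker}(d_2^*)$ in the proof of Proposition~\ref{Pcyckerd2} --- and conversely any such $\phi^*$ pulls back to an element of ${\rm Ker}(\bar{d}_2^*)$. Via the inclusions of Lemmas~\ref{LdecomposeH1} and~\ref{Ld2d2bar} into $\rm{H}^1(N,M)^Q$, the two kernels therefore coincide.

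The main obstacle is part (1): establishing that $\bar{N}^*$ is central in $\bar{G}^*$ and has the full rank $r+1$. The rank I avoid deducing from an independence statement in $L^\times/(L^\times)^p$, instead exhibiting the $r+1$ independent automorphisms $(\tau_i^*)^p$ and combining with the radical upper bound $[F^*:L]\le p^{r+1}$; the centrality rests on the fact that the only ``new'' ramification is that of $(\tau_0^*)^p$, which acts only on $\zeta_{p^3}$, so one must track it carefully against each $\tau_j^*$ using $e\equiv1\bmod p$. Parts (2) and (3) are then bookkeeping, the only delicate point being the harmless sign in $\bar{c}_{0,k}^*=(\bar{a}_k^*)^{\pm1}$, which cannot affect the final identification of kernels since Theorem~\ref{Tkerd2} sees $\bar{c}_{0,k}^*$ only through the homomorphism $\bar{\phi}^*$.
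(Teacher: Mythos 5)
Your proposal is correct and takes essentially the same route as the paper: you identify $\bar{a}_i^*$ with $(\tau_i^*)^p$, compute $\bar{c}_{0,k}^*$ from the relation $\tau_0^*\tau_k^*(\tau_0^*)^{-1}=(\tau_k^*)^e$ with $e\equiv p+1\bmod p^2$, and reduce part (3) to Theorem~\ref{Tkerd2} combined with Theorem~\ref{Tnorm}, exactly as the paper does. You also supply details the paper leaves implicit (the centrality of $\bar{N}^*$, which makes $\bar{a}_i^*$ and $\bar{c}_{j,k}^*$ independent of the choice of section, and the commutator-convention sign in $\bar{c}_{0,k}^*=(\bar{a}_k^*)^{\pm 1}$, which, as you note, is immaterial for the identification of the kernels).
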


\begin{proof}
\begin{enumerate}
\item First $\bar{N}^* \simeq \times_{i=0}^r \bar{N_i^*}$ where $\bar{N}_i^* = {\rm Gal}(LK_i^*/L)$.
By Lemma \ref{Lclasscyc}, $\bar{N}_0^* = N^*$ is generated by $a_0^*$.
For $1 \leq i \leq r$, the image of $\bar{a}_i^*$ in $\bar{N}_i^*$ is non-trivial because 
$(\tau_i^*)^p (\sqrt[p^2]{t_i})/\sqrt[p^2]{t_i} = \zeta_p$;
but the image of $\bar{a}_i^*$ in $\bar{N}_j^*$ for $i \not = j$ is trivial.

\item If $j \not = 0$, then $\bar{c}_{j,k}^*$ is trivial because $\tau_j^*$ and $\tau_k^*$ commute.
If $j=0$, then 
\[\tau_0^* \tau_k^* (\tau_0^*)^{-1} (\tau_k^*)^{-1}(\sqrt[p^2]{t_i})/\sqrt[p^2]{t_i}= (\tau_k^*)^{e-1}(\sqrt[p^2]{t_i})/(\sqrt[p^2]{t_i})
= \zeta_{p^2}^{e-1} = \zeta_p.\]

\item  Suppose $\phi \in \bar{d}_2^*$.  Then $\phi(\bar{a}_i^*) = 0$ for $1 \leq i \leq r$.
So $\phi$ is zero on $\times_{i=1}^r \bar{N}^*$, and is thus determined by its image on $\bar{N}_0^*$
\end{enumerate}
\end{proof}

\subsection{Review of Heisenberg extensions} 

Let $H_p$ denote the mod $p$ Heisenberg group, namely the multiplicative group of upper triangular $3 \times 3$ matrices with coefficients in $\mathbb{Z}/p \mathbb{Z}$ and diagonal entries equal to $1$.  
Let $U_p$ denote the central subgroup of $H_p$ consisting of those matrices for which the upper right corner is the only non-zero entry off the diagonal.

Let $q: H_p \to H_p/U_p \simeq \mathbb{Z}/p \mathbb{Z} \times \mathbb{Z}/p \mathbb{Z}$ denote the quotient map.
The two coordinate projections $\mathbb{Z}/p \mathbb{Z} \times \mathbb{Z}/p \mathbb{Z} \to \mathbb{Z}/p \mathbb{Z}$ produce two classes 
$\iota_1$, $\iota_2$ in $\rm{H}^1( \mathbb{Z}/p \mathbb{Z} \times \mathbb{Z}/p \mathbb{Z}, \mathbb{Z}/p \mathbb{Z})$. The 
cup product $\iota_1 \cup \iota_2$ in $\rm{H}^2( \mathbb{Z}/p \mathbb{Z} \times \mathbb{Z}/p \mathbb{Z}, \mathbb{Z}/p \mathbb{Z})$ classifies 
the extension \begin{equation}\label{Hextension}1\to U_p \to H_p \stackrel{q}{\to} \mathbb{Z}/p \mathbb{Z} \times \mathbb{Z}/p \mathbb{Z} \to 1.\end{equation}

Heisenberg extensions appear in many places in the literature.  We follow the notation of 
\cite{Sharifi}.
The next result is a special case of \cite[Proposition 2.3]{Sharifi}.  Let $K=\mathbb{Q}(\zeta_p)$.

\begin{proposition}\label{makingHexts} 
Suppose $L_{\alpha,\beta} = K(\sqrt[p]{\alpha}, \sqrt[p]{\beta})$ is a field extension of $K$ with 
${\rm Gal}(L_{\alpha, \beta}/K) \simeq \mathbb{Z}/p \mathbb{Z}\times \mathbb{Z}/p \mathbb{Z}$.
Then there is a Galois field extension $R/K$ dominating $L_{\alpha, \beta}/K$ 
such that $\operatorname{Gal}(R/K) \to \operatorname{Gal}(L_{\alpha, \beta}/K)$ is isomorphic to 
$q: H_p \to \mathbb{Z}/p \mathbb{Z} \times \mathbb{Z}/p \mathbb{Z}$ if and only if $\kappa(\alpha) \cup \kappa(\beta) = 0$ in 
$\rm{H}^2(G_K, (\mathbb{Z}/p \mathbb{Z})(2)) \cong \rm{H}^2(G_K, \mathbb{Z}/p \mathbb{Z})$.
\end{proposition}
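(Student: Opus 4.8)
The plan is to deduce this from \cite[Proposition 2.3]{Sharifi} by specializing to the cyclotomic base field $K=\mathbb{Q}(\zeta_p)$, where the Kummer theory becomes particularly clean because $\mu_p \subset K$. First I would recall that, since $\zeta_p \in K$, Kummer theory gives a canonical identification of $K^\times/(K^\times)^p$ with $\mathrm{H}^1(G_K,\mu_p) \cong \mathrm{H}^1(G_K,\mathbb{Z}/p\mathbb{Z})$, where the last isomorphism uses a fixed choice of $\zeta_p$; the class $\kappa(\alpha)$ is the image of $\alpha \in K^\times$ under this map, and the field $L_{\alpha,\beta}=K(\sqrt[p]{\alpha},\sqrt[p]{\beta})$ has $\mathrm{Gal}(L_{\alpha,\beta}/K) \cong \mathbb{Z}/p\mathbb{Z}\times\mathbb{Z}/p\mathbb{Z}$ precisely when $\kappa(\alpha)$ and $\kappa(\beta)$ are linearly independent in $\mathrm{H}^1(G_K,\mathbb{Z}/p\mathbb{Z})$ — this is exactly the running hypothesis. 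Under the quotient $G_K \twoheadrightarrow \mathrm{Gal}(L_{\alpha,\beta}/K) \cong \mathbb{Z}/p\mathbb{Z}\times\mathbb{Z}/p\mathbb{Z}$, the two classes $\kappa(\alpha),\kappa(\beta)$ are the pullbacks of the coordinate classes $\iota_1,\iota_2$, so by naturality of the cup product, $\kappa(\alpha)\cup\kappa(\beta)$ is the pullback of $\iota_1\cup\iota_2 \in \mathrm{H}^2(\mathbb{Z}/p\mathbb{Z}\times\mathbb{Z}/p\mathbb{Z},\mathbb{Z}/p\mathbb{Z})$, which by \eqref{Hextension} is the obstruction class to lifting the surjection through $q: H_p \to \mathbb{Z}/p\mathbb{Z}\times\mathbb{Z}/p\mathbb{Z}$.

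Next I would translate ``dominating Galois extension $R/K$ with $\mathrm{Gal}(R/K)\to\mathrm{Gal}(L_{\alpha,\beta}/K)$ isomorphic to $q$'' into the statement that the surjection $\phi: G_K \twoheadrightarrow \mathbb{Z}/p\mathbb{Z}\times\mathbb{Z}/p\mathbb{Z}$ lifts to a (necessarily surjective, since $U_p=[H_p,H_p]=\Phi(H_p)$) homomorphism $\tilde\phi: G_K \to H_p$; the fixed field of $\ker\tilde\phi$ is then the desired $R$, and conversely. By the standard obstruction-theory interpretation of group extensions (e.g.\ the same inflation–restriction / $H^2$-classification formalism invoked in Section~\ref{Sclass}), such a lift exists if and only if the pullback $\phi^*(\iota_1\cup\iota_2) \in \mathrm{H}^2(G_K,U_p)$ vanishes; identifying $U_p \cong \mathbb{Z}/p\mathbb{Z}$ via its unique nontrivial entry, this pullback is exactly $\kappa(\alpha)\cup\kappa(\beta) \in \mathrm{H}^2(G_K,\mathbb{Z}/p\mathbb{Z})$ by the previous paragraph. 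Since $G_K$ has cohomological dimension issues only at the infinite place, which is absent here (all places of $K$ above $\infty$ are complex, $r_2(K)=r$), there is no local-global subtlety beyond what \cite{Sharifi} already handles, and the twist $(\mathbb{Z}/p\mathbb{Z})(2)$ versus $\mathbb{Z}/p\mathbb{Z}$ is harmless because $\mu_p \subset K$ makes the Tate twist trivial as a $G_K$-module, explaining the isomorphism $\mathrm{H}^2(G_K,(\mathbb{Z}/p\mathbb{Z})(2))\cong\mathrm{H}^2(G_K,\mathbb{Z}/p\mathbb{Z})$.

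I would then simply cite \cite[Proposition 2.3]{Sharifi} for the precise statement — noting that loc.\ cit.\ is stated in the generality of an arbitrary number field containing $\mu_p$ with a possibly larger modulus, and that our claim is the special case $F=K=\mathbb{Q}(\zeta_p)$ — and spell out only the identification of the cup-product obstruction with $\kappa(\alpha)\cup\kappa(\beta)$ as above. The main obstacle, such as it is, is bookkeeping rather than mathematics: one must be careful that the ``if and only if'' genuinely goes both ways, i.e.\ that a surjective lift $\tilde\phi$ to $H_p$ exists whenever the obstruction vanishes (using that $\mathrm{H}^2$ of a profinite group classifies extensions and that any lift of a surjection onto the Frattini quotient $H_p/U_p$ is automatically surjective), and that the normalization conventions for $\kappa$ and for the cup product in \cite{Sharifi} match those used implicitly elsewhere in this paper. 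Once those conventions are pinned down, the proof is a one-line invocation of \cite[Proposition 2.3]{Sharifi}.
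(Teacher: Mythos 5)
Your proposal is correct and takes essentially the same route as the paper, which justifies this proposition simply by noting it is a special case of \cite[Proposition 2.3]{Sharifi}. The extra details you supply (naturality of the cup product identifying $\kappa(\alpha)\cup\kappa(\beta)$ with the pullback of $\iota_1\cup\iota_2$, the vanishing of this pullback as the lifting obstruction, and surjectivity of any lift via $U_p=\Phi(H_p)$) are sound and consistent with Sharifi's argument.
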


Furthermore, by \cite[Section 2.4]{Sharifi}, the extension $R/K$ can be constructed explictly, and in some sense uniquely, as follows.
Let $K_\alpha=K(\sqrt[p]{\alpha})$ and $K_\beta=K(\sqrt[p]{\beta})$.  Then $L_{\alpha,\beta}=K_\alpha K_\beta$.
Let $\tau_\alpha \in \operatorname{Gal}(K_\alpha/K)$ be multiplication by $\zeta_p$ on $\sqrt[p]{\alpha}$ and 
let $\tau_\beta \in \operatorname{Gal}(K_\beta/K)$ be multiplication by $\zeta_p$ on $\sqrt[p]{\beta}$.
These determine 2 characters $\chi_\alpha$ and $\chi_\beta$.

Consider the surjection $\bar{\rho}:G_K \to \operatorname{Gal}(L_{\alpha, \beta}/K)$.
By \cite[Section 2.4]{Sharifi}, $\bar{\rho}$ lifts to $\rho:G_K \to H_p$ if and only if the cup product 
$\chi_\alpha \cup \chi_\beta$ is zero.  Also, the cup product equals the norm residue symbol $(\alpha,\beta)$
which is trivial if and only if $\beta \in N_{K_\alpha/K}(K_\alpha^*)$ 
\cite[XIV.2]{SerreLF}.  
In this case, let $\underline{\beta} \in K_\alpha^*$ be such that $\beta= N_{K_\alpha/K}(\underline{\beta})$.
Then, let
\begin{equation} \label{Egammagen}
\gamma_{\alpha,\beta}=\prod_{j=0}^{p-1} \tau_\alpha^j(\underline{\beta})^j.
\end{equation}
By \cite[Lemma 2.4]{Sharifi}, 
$\sigma(\gamma_{\alpha, \beta}) \equiv \gamma_{\alpha, \beta} \bmod (L_{\alpha, \beta}^*)^p$ for all $\sigma \in {\rm Gal}(L_{\alpha,\beta}/K)$.

By \cite[Theorem 2.5]{Sharifi}, the Heisenberg relation $\rho$ has the property that, for all $\xi \in G_{L_{\alpha,\beta}}$,
\[\rho(\xi)={\rm Ind} \frac{\xi(\sqrt[p]{\gamma_{\alpha, \beta}})}{\sqrt[p]{\gamma_{\alpha, \beta}}}.\]
This means that $\rho$ factors through the extension $R_{\alpha,\beta}=L_{\alpha,\beta}(\sqrt[p]{\gamma_{\alpha,\beta}})$.
Furthermore, $\gamma_{\alpha, \beta}$ is unique up to multiplication by an element of $K^* (L_{\alpha, \beta}^*)^p$. 

Finally, by \cite[Equation 2.4]{Sharifi}, 
\begin{equation} \label{Eactiongen}
\tau_\alpha(\gamma_{\alpha, \beta})=\frac{\underline{\beta}^p}{N_{K_\alpha/K}(\underline{\beta})}\gamma_{\alpha, \beta}.
\end{equation}

\subsection{Heisenberg extensions of $K$} \label{Sheisenberg}

We apply this to the $(\mathbb{Z}/p \mathbb{Z})^2$-extensions of $K$ in $L$.  
The Steinberg relation is that the cup product $\kappa(\alpha) \cup \kappa(1-\alpha) = 0$ is zero
for any $\alpha \in K^* -\{1\}$, see \cite[section 11]{Milnor}.
Let $1 \leq I \leq r$.
Choose $\alpha=\zeta_p^{-I}$ and $\beta=1-\zeta_p^{-I}$. Let
\[F_I=K(\sqrt[p]{\zeta_p^{-I}}, \sqrt[p]{1-\zeta_p^{-I}}).\]

Applying Proposition \ref{makingHexts}, there is a field extension $R_{I}/K$ dominating 
$F_I/K$ such that $\operatorname{Gal}(R_I/K) \to \operatorname{Gal}(F_I/K)$ is isomorphic to $q: H_p \to \mathbb{Z}/p \mathbb{Z} \times \mathbb{Z}/p \mathbb{Z}$.

\begin{lemma} \label{Lbetanorm}
Let $w=\zeta_{p^2}$ and $\underline{\beta}_I=1-w^{-I}$.  Let $K_\alpha=K(\sqrt[p]{\zeta_p^{-I}})=K(w)$.  
Then $N_{K_\alpha/K}(\underline{\beta}_I) = 1 - \zeta_p^{-I}$.
\end{lemma}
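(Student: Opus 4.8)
The plan is to compute the norm $N_{K_\alpha/K}(\underline{\beta}_I)$ directly, using the explicit description of the Galois group $\operatorname{Gal}(K_\alpha/K)$ from the cyclotomic picture. First I would record that $K_\alpha = K(\sqrt[p]{\zeta_p^{-I}})$; since $\zeta_p = \zeta_{p^2}^p = w^p$, a $p$th root of $\zeta_p^{-I}$ is $w^{-I}$, and since $I$ is coprime to $p$ we have $K(w^{-I}) = K(w) = \mathbb{Q}(\zeta_{p^2})$, so indeed $K_\alpha = K(w)$ is the $\mathbb{Z}/p\mathbb{Z}$-extension $K(\zeta_{p^2})/K$. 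The Galois group $\operatorname{Gal}(K(w)/K)$ is cyclic of order $p$, generated by $\tau$ with $\tau(w) = w^{1+p}$ (the nontrivial automorphisms fix $\zeta_p = w^p$ and raise $w$ to a power $\equiv 1 \bmod p$), so its elements are $\tau^j$ with $\tau^j(w) = w^{(1+p)^j}$ for $0 \le j \le p-1$.

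Then I would write out the norm as a product over the Galois group:
\[
N_{K_\alpha/K}(\underline{\beta}_I) = \prod_{j=0}^{p-1} \tau^j(1 - w^{-I}) = \prod_{j=0}^{p-1} \bigl(1 - w^{-I(1+p)^j}\bigr).
\]
The key observation is that as $j$ ranges over $0, \ldots, p-1$, the exponents $-I(1+p)^j \bmod p^2$ run exactly over the residues $\equiv -I \bmod p$ in $(\mathbb{Z}/p^2\mathbb{Z})$ that are coprime to $p$ — that is, over $\{-I + kp : 0 \le k \le p-1\}$ (each exactly once, since $(1+p)$ has exact order $p$ modulo $p^2$ and generates the subgroup of $1$-units). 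So the product becomes $\prod_{k=0}^{p-1}(1 - w^{-I+kp}) = \prod_{k=0}^{p-1}(1 - \zeta_p^{-I} w^{kp} \cdot w^{-I}\zeta_p^{I}\cdots)$; more cleanly, setting $u = w^{-I}$, this is $\prod_{k=0}^{p-1}(1 - u\,\zeta_p^{\,k'})$ where $\zeta_p^{k'} = w^{kp}$ ranges over all $p$th roots of unity. Using the factorization $\prod_{k=0}^{p-1}(T - \zeta_p^k) = T^p - 1$ evaluated appropriately, $\prod_{k=0}^{p-1}(1 - u\zeta_p^k) = 1 - u^p = 1 - w^{-Ip} = 1 - \zeta_p^{-I}$, which is exactly $\beta = 1 - \zeta_p^{-I}$, as claimed.

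The main obstacle is bookkeeping with the exponents: I need to be careful that $(1+p)^j$ for $j = 0, \ldots, p-1$ really hits every class $\equiv 1 \bmod p$ in $(\mathbb{Z}/p^2\mathbb{Z})^\times$, and that multiplying the exponent by $-I$ (a unit mod $p^2$) then permutes the fibre over $-I \bmod p$ bijectively — this is where the hypothesis $1 \le I \le r$ (so $p \nmid I$) is used. Once the index set is pinned down, the collapse of the product via $\prod(1 - u\zeta_p^k) = 1 - u^p$ and $w^{-Ip} = \zeta_p^{-I}$ is routine. I should also double-check the orientation of $\tau$ (whether $\tau(w) = w^{1+p}$ or some other primitive choice) does not matter, since any generator gives the same full product over the group.
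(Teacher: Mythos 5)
Your proposal is correct and follows essentially the same route as the paper: the paper likewise expands the norm as $\prod_{\ell=0}^{p-1}\tau_0^\ell(1-w^{-I})$ using $\tau_0(w)=\zeta_p w$ (your $\tau(w)=w^{1+p}$ is the same automorphism), obtaining $\prod_{\ell=0}^{p-1}(1-\zeta_p^{-I\ell}w^{-I})=1-\zeta_p^{-I}$ via the cyclotomic identity $\prod_{k=0}^{p-1}(1-u\zeta_p^k)=1-u^p$. Your extra bookkeeping with the exponents $(1+p)^j$ modulo $p^2$ is just a more explicit version of the same collapse.
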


\begin{proof}
By definition, $N_{K_\alpha/K}(\underline{\beta}_I)=\prod_{\ell=0}^{p-1} \tau_0^\ell (\underline{\beta}_I)$.
Because $\tau_0(w)=\zeta_p w$,
this simplifies to 
\begin{equation} \label{Ebetaidentity}
(1-w^{-I})(1-\zeta_p^{-I} w^{-I}) \cdots (1-\zeta_p^{-I(p-1)} w^{-I})=1-\zeta_p^{-I}.
\end{equation}
\end{proof}

By definition, $\tau_\beta$ acts by multiplication by $\zeta_p$ on $\sqrt[p]{\beta}$ where 
$\beta=1-\zeta_p^{-I}$.  So $\tau_\beta=\tau_{I}$.
By definition, $\tau_\alpha$ acts by multiplication by $\zeta_p$ on $\sqrt[p]{\alpha}$ where 
$\alpha=\zeta_p^{-I}$.  So $\tau_\alpha=\tau_0^J$ where $J$ is such that $-IJ \equiv 1 \bmod p$.

In particular, $\tau_\alpha(w) = \zeta_p^J w$.
Write 
$\underline{\beta}=\underline{\beta}_I=1-w^{-I}$.
So 
\[\tau_\alpha^j(\underline{\beta}) = 1 - (\zeta_p^{jJ}w)^{-I} = 1 - \zeta_p^j w^{-I}.\] 
 
Let $\gamma_I = \gamma_{\alpha, \beta}$.  
By \eqref{Egammagen},
\begin{equation} \label{Egamma}
\gamma_I=\prod_{j=0}^{p-1} \tau_\alpha^j(\underline{\beta})^j
=\prod_{j=1}^{p-1} (1 - \zeta_p^{j} w^{-I})^j=\prod_{j=1}^{p-1} (1-w^{pj-I})^j.
\end{equation}

By \eqref{Eactiongen} and Lemma \ref{Lbetanorm},
\begin{equation} \label{Egaloisactionc}
\tau_\alpha(\gamma_I)=\frac{(1-w^{-I})^p}{1-\zeta_p^{-I}}\gamma_I.
\end{equation}

Let $\tilde{R}_I = L R_I$.
Both $R_I/F_I$ and $\tilde{R}_I/L$ are generated by $\sqrt[p]{\gamma_I}$. 

\begin{lemma} \label{LdominateH}
The Heisenberg extension $R_{I}/K$ is ramified only over $1 - \zeta_p$.  
Thus $\tilde{R}_{I} \subset E$.
\end{lemma}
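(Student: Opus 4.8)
The plan is to reduce the statement to a single valuation computation for $\gamma_I$. Since $L/K$ is already known to be ramified only over $\nu=\langle 1-\zeta_p\rangle$ (Section~\ref{Sfield}), and $\tilde R_I=LR_I$, it suffices to prove that $R_I/K$ is ramified only over $\nu$: the compositum $\tilde R_I/K$ is then ramified only over $\nu$, so $\tilde R_I/L$ is ramified only over the primes of $L$ above $p$; and $\tilde R_I=L(\sqrt[p]{\gamma_I})$ has degree dividing $p$ over $L$, hence is an elementary abelian $p$-extension, so $\tilde R_I\subseteq E$ by the maximality in the definition of $E$.

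First I would handle $F_I/K$. By Lemma~\ref{Lbetanorm}, $K(\sqrt[p]{\zeta_p^{-I}})=K(w)=K_0$, where $w=\zeta_{p^2}$; and since $t_I=1-\zeta_p^{-I}$ we have $K(\sqrt[p]{1-\zeta_p^{-I}})=K_I$. Thus $F_I=K_0K_I\subseteq L$, so $F_I/K$ is ramified only over $\nu$ because $L/K$ is. Next, $R_I=F_I(\sqrt[p]{\gamma_I})$ with $\zeta_p\in F_I$, so by Kummer theory a prime $\mathfrak l$ of $F_I$ with $\mathfrak l\nmid p$ is unramified in $R_I/F_I$ as soon as $p\mid v_{\mathfrak l}(\gamma_I)$. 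Hence it is enough to show that the divisor of $\gamma_I$ is supported only on the primes of $F_I$ above $p$.

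This is the step that needs care, but it is routine. By \eqref{Egamma}, $\gamma_I=\prod_{j=1}^{p-1}(1-w^{pj-I})^j$. Since $1\le I\le r<p$, each exponent $pj-I$ is prime to $p$, so by the standard description of the prime above $p$ in a cyclotomic field (cf.\ \cite[Ch.\ 1]{washingtonbook}), $1-w^{pj-I}$ is a unit of $\mathbb Z[\zeta_{p^2}]$ times the uniformizer $1-w$ at the unique prime $\mathfrak q$ of $\mathbb Q(\zeta_{p^2})=K(w)$ above $p$. Therefore, as ideals of $\mathcal O_{K(w)}$, $(\gamma_I)=\mathfrak q^{\,1+2+\cdots+(p-1)}=\mathfrak q^{\,p(p-1)/2}$, so in $F_I$ the divisor of $\gamma_I$ is supported only over $p$; in particular $v_{\mathfrak l}(\gamma_I)=0$ for every prime $\mathfrak l\nmid p$ of $F_I$. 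Combining this with the previous paragraph, $R_I/F_I$ is ramified only over primes above $\nu$, hence $R_I/K$ is ramified only over $\nu$, and the lemma follows as explained. I note in passing that $p(p-1)/2=pr$ is divisible by $p$, so $\gamma_I$ is a unit times a $p$-th power in $K(w)$; this gives the slightly stronger fact that $\tilde R_I/L$ is obtained by adjoining a $p$-th root of a global unit, which may be useful elsewhere.
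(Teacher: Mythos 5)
Your proof is correct and follows essentially the same route as the paper: reduce to showing $R_I/F_I$ is ramified only above $p$ via the Kummer criterion for $F_I(\sqrt[p]{\gamma_I})$, and observe from \eqref{Egamma} that the divisor of $\gamma_I$ is supported on the unique prime of $K(w)$ above $p$ (the paper phrases this as $\gamma_I$ being a product of conjugates of $\underline{\beta}_I$, each generating that prime, and cites the criterion from Cassels--Fr\"ohlich). Your explicit valuation count $v_{\mathfrak q}(\gamma_I)=p(p-1)/2$ is the same computation the paper uses later in Lemma \ref{Lheiscond}, so the argument matches in substance.
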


\begin{proof}
The field $\tilde{R}_I$ is a $\mathbb{Z}/p \mathbb{Z}$-Galois extension of $L$.
The second statement follows directly from the first, since it guarantees that 
the ramification of $\tilde{R}_I$ occurs only at primes above $p$.

Recall that $F_I$ over $K$ is ramified only over $p$ \cite[Lemma 3.2]{WINF:birs}.
So it suffices to prove that all the ramification of $R_{I}$ over $F_I$ lies above $p$.
By \cite[Lemma~5, Section 3.2]{CasselsFrohlich}, a prime $\eta$ of $F_I$ is unramified in $R_{I}$ if $\eta \nmid p \gamma_I$.
But $\gamma_I$ is a product of powers of the conjugates of $\underline{\beta}_I$ under $\sigma$.
Each of these conjugates of $\underline{\beta}_I$ is a generator for the unique prime ideal of $K_I$ above $p$.
Thus the primes of $F_I$ which are ramified in $R_{I}$ all lie above $\underline{\beta}_I$ which lies above $p$.  Since there are no real places of $K$, there is no ramification of $K$ over any infinite place.
\end{proof}

The following result is useful because the conductor is the same as the index of the modulus
for which this extension appears in the ray class field. 

\begin{lemma} \label{Lheiscond}
The conductor of $R_I/F_I$ is $p^2 + p(p-1)/2$.
\end{lemma}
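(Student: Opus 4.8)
The plan is to compute the conductor of $R_I/F_I$ via ramification theory, working one prime at a time and transferring information between the towers $K \subset K_\alpha=K(w) \subset F_I$ and $K \subset K_\alpha \subset F_I \subset R_I$. Since $R_I/F_I$ is a degree-$p$ Kummer extension generated by $\sqrt[p]{\gamma_I}$, and by Lemma~\ref{LdominateH} ramification occurs only above the unique prime of $F_I$ lying over $\langle 1-\zeta_p\rangle$, the conductor is a power of that prime, equal to one more than the unique lower ramification jump. So the task reduces to locating that jump.

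First I would pin down the ramification in $K(w)/K$: this is the degree-$p$ subextension of $\mathbb{Q}(\zeta_{p^2})/\mathbb{Q}(\zeta_p)$, which is totally ramified above $p$ with jump $p-1$ (as already used in the proof of Lemma~\ref{Lcyclocond}). Next I would analyze $F_I/K_\alpha = K(w)(\sqrt[p]{\zeta_p^{-I}} \cdot(\ldots))$; more usefully, $F_I = L_{\alpha,\beta}$ has $K(w)$ as a subfield and $F_I/K(w)$ is generated by $\sqrt[p]{1-\zeta_p^{-I}} = \sqrt[p]{\beta}$. Using the standard valuation computation for $\mathbb{Q}(\zeta_p)(\sqrt[p]{u})$ where $u$ is a uniformizer-power, together with the fact that $v(1-\zeta_p^{-I})=1$ in $K$ and hence $v=p$ in $K(w)$ (ramification index $p$), one gets the jump for $F_I/K(w)$. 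Then Herbrand's formula (exactly as in Lemma~\ref{Lcyclocond}) converts upper jumps for the tower $F_I/K$ into lower jumps.

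The core computation is then $R_I/F_I$. Here I would use \eqref{Egamma}, namely $\gamma_I=\prod_{j=1}^{p-1}(1-w^{pj-I})^j$, to compute the $\mathfrak{P}$-adic valuation of $\gamma_I$ at the prime $\mathfrak{P}$ of $F_I$ above $p$. Each factor $1-w^{pj-I}$ is (a unit times) a generator of the prime above $p$ in $\mathbb{Q}(\zeta_{p^2})$, so has valuation $1$ in $K(w)$ and valuation $p$ in $F_I$; thus $v_{\mathfrak{P}}(\gamma_I) = p\sum_{j=1}^{p-1} j = p\binom{p}{2} = p^2(p-1)/2$. Writing $\gamma_I = \pi^{v} \cdot (\text{unit})$ and applying the Kummer-extension conductor formula for $F_I(\sqrt[p]{\gamma_I})/F_I$ — where the jump depends on $v \bmod p$ and, when $p \mid v$, on the finer structure of the unit part relative to the higher unit filtration — should yield the lower jump $p^2 + p(p-1)/2 - 1$, hence conductor $p^2 + p(p-1)/2$. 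The main obstacle will be the case analysis inherent in the Kummer conductor formula: because $v_{\mathfrak{P}}(\gamma_I) = p^2(p-1)/2$ is divisible by $p$, one must adjust $\gamma_I$ by a $p$-th power to bring it into a standard form $\varepsilon \pi^{v'}$ with $p \nmid v'$ or $\varepsilon \equiv 1$ to high order, and then read off the jump from how deep $\varepsilon$ sits in the unit filtration; handling this adjustment carefully — and checking it is consistent with the claimed answer $p^2+p(p-1)/2$ — is the delicate step. I would also double-check the answer against the known $p=3$ case, where the formula gives conductor $9+3=12$, and against the appearance of $R_I$ in the relevant ray class field modulus.
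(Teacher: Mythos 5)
Your setup is sound and your valuation count matches the paper's: at the unique (totally ramified) prime $\mathfrak{P}$ of $F_I$ above $p$ one has $v_{\mathfrak{P}}(\gamma_I)=p\sum_{j=1}^{p-1}j=p^2(p-1)/2$, and the conductor of the cyclic degree-$p$ extension $R_I/F_I$ is one more than its single ramification jump. But the proposal stops exactly where the content of the lemma lies. Because $p \mid v_{\mathfrak{P}}(\gamma_I)$ --- indeed $\gamma_I=(\lambda^{(p-1)/2})^p u$ with $\lambda=1-w$ a uniformizer of $K(w)$ and $u$ a unit --- the valuation of $\gamma_I$ by itself carries no information about the jump: $R_I=F_I(\sqrt[p]{u})$ is generated by the $p$-th root of a unit, and the conductor is governed by how deep $u$ sits in the unit filtration of $F_I^{\times}$ modulo $p$-th powers. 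You correctly name this as ``the delicate step,'' but you never perform the congruence analysis for the specific element $\prod_{j=1}^{p-1}(1-w^{pj-I})^j$; the assertion that the case analysis ``should yield'' the jump $p^2+p(p-1)/2-1$ is exactly the claim to be proved, and nothing in your outline produces the $p^2$ term (which in the paper enters as $v_{F_I}(\zeta_p-1)=p^2$). The preliminary Herbrand bookkeeping for $K(w)/K$ and $F_I/K(w)$ is correct but does not feed into this computation, and the proposed sanity check against the $p=3$ ray class modulus is circular at this point of the paper, since the identification of the modulus-$\mathfrak{p}^{12}$ layer with the Heisenberg extension in Notation \ref{NgenerateN} is made \emph{using} this lemma.

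For comparison, the paper's short argument works directly with the generator $z=\sqrt[p]{\gamma_I}$: it reads the conductor off from the valuation of $g(z)-z=(\zeta_p-1)z$, where $g$ generates ${\rm Gal}(R_I/F_I)$, using $v_{F_I}(z)=v_{K(w)}(\gamma_I)=p(p-1)/2$ and $v_{F_I}(\zeta_p-1)=p^2$, and so never passes through the unit-filtration case analysis at all. Your route via the standard local Kummer conductor formula is legitimate and could in principle be completed, but as written the decisive step --- pinning down the exact depth at which the unit part of $\gamma_I$ becomes a $p$-th power modulo powers of $\mathfrak{P}$ --- is missing, so the claimed value $p^2+p(p-1)/2$ is asserted rather than derived.
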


\begin{proof}
Take $z=\sqrt[p]{\gamma_I}$.  Then $z$ generates $R_I/F_I$.
The conductor is the valuation in $F_I$ of $g(z)-z = (\zeta_p-1)z$, where $g$ generates ${\rm Gal}(R_I/F_I)$. 
The valuation of $z$ in $F_I$ equals the valuation of 
$\gamma_I$ in $\mathbb{Z}[\zeta_{p^2}]$ which is $1 + \cdots + (p-1)=p(p-1)/2$.
So the conductor is $p^2 + p(p-1)/2$.  
\end{proof}

We use Lemma \ref{Lheiscond} in Notation \ref{NgenerateN} and Remark \ref{Ridentify} when $p=3$, 
with conductor $12$. 

\subsection{The Heisenberg classifying element} \label{Sheisclass}

Recall that $R_{I}/K$ is an $H_p$-Galois extension dominating $F_I/K$ and 
$\tilde{R}_I = L R_I$.
Let $N_I = {\rm Gal}(\tilde{R}_I/L) = {\rm Gal}(R_I/F_I) \simeq \mathbb{Z}/p \mathbb{Z}$.

Let $\tilde{R}$ be the compositum of $\tilde{R}_I$ for $I \in {\mathcal I}:=\{1, \ldots, (p-1)/2\}$.  
Note that $\tilde{R}/K$ is Galois because the 
action of $Q$ permutes the fields $F_I$, and thus permutes the Heisenberg extensions $R_I$.  
Thus $Q$ stabilizes $\tilde{R}$.
Let $\bar{N}_H = {\rm Gal}(\tilde{R}/L)$.  

\begin{proposition} \label{Ptrivialinquotient}
Let $\bar{a}_i, \bar{c}_{j,k}$ be the images of $a_i, c_{j,k}$ in $\bar{N}_H$.
Then $\bar{N}_H \simeq \times_{I \in {\mathcal I}} N_I \simeq (\mathbb{Z}/p\mathbb{Z})^r$
and a basis for $\bar{N}_H$ is given by $\{\bar{c}_{0,k} \mid 1 \leq k \leq r\}$.  In particular, 
\begin{enumerate}
\item $\bar{a}_i$ is trivial for $0 \leq i \leq r$;
\item $\bar{c}_{j,k}$ is trivial if $j \not = 0$;
\item and the image of 
$\bar{c}_{0,k}$ in $N_I$ is non-trivial if and only if $k=I$.  
\end{enumerate}
\end{proposition}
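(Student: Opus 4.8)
The plan is to first establish the direct-product decomposition $\bar N_H \simeq \times_{I\in\mathcal I} N_I$, and then compute the images of the $a_i$ and $c_{j,k}$ inside each factor $N_I={\rm Gal}(\tilde R_I/L)$ using the explicit Heisenberg data from Section~\ref{Sheisenberg}. For the decomposition, I would argue that $\tilde R = \prod_{I\in\mathcal I}\tilde R_I$ and that the extensions $\tilde R_I/L$ are linearly disjoint: each $\tilde R_I/L$ is cyclic of degree $p$ generated by $\sqrt[p]{\gamma_I}$, and the $\gamma_I$ (as elements of $L^*/(L^*)^p$, equivalently as Kummer classes) are independent. Independence should follow from the fact that $\gamma_I$ is a product of powers of conjugates of $\underline{\beta}_I=1-w^{-I}$, so the valuation/support data at the relevant primes distinguishes the different $I$; alternatively one can note that $\tilde R_I$ already determines the subextension $F_I/K$, and the $F_I$ are linearly disjoint over $K$ (being among the $K_i$, which are linearly disjoint by Vandiver, Section~\ref{Sgalois}). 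This gives $\bar N_H\simeq(\mathbb Z/p\mathbb Z)^r$ with one $\mathbb Z/p\mathbb Z$ per $I\in\mathcal I$.

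Next I would compute the commutators. Recall $N$ acts trivially on $M$ but $Q$ acts on $N$; the point is to see how $s(\tau_k)$ and $s(\tau_j)$ act on $\sqrt[p]{\gamma_I}$. From the Heisenberg description, $\tilde R_I/L$ corresponds to the cup product $\chi_\alpha\cup\chi_\beta$ with $\alpha=\zeta_p^{-I}$, $\beta=1-\zeta_p^{-I}$, so $\tau_\alpha=\tau_0^J$ (with $-IJ\equiv1\bmod p$) and $\tau_\beta=\tau_I$. In $H_p$, the commutator $[q^{-1}(\bar\tau_\beta),q^{-1}(\bar\tau_\alpha)]$ generates the center $U_p$, while the two factors commute with every other $\tau$. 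Concretely, using \eqref{Eactiongen}/\eqref{Egaloisactionc}, $\tau_\alpha$ acts nontrivially on $\gamma_I$ (by $(1-w^{-I})^p/(1-\zeta_p^{-I})$, a $p$th power times a nontrivial class), and $\tau_\beta$ also moves $\sqrt[p]{\gamma_I}$; all $\tau_i$ with $i\notin\{0,I\}$ fix $\gamma_I$ up to $p$th powers hence fix $\sqrt[p]{\gamma_I}$ up to the $N_I$-action — but that action is what $\bar c$ records, so I must be careful. The cleanest route: the extension $\tilde R_I/K$ has Galois group a quotient of $Q\times\langle g_I\rangle$ mediated by the Heisenberg relation, in which $g_I=[\hat\tau_I,\hat\tau_0^J]$ and $g_I$ is central; since $J$ is a unit mod $p$, $[\hat\tau_I,\hat\tau_0^J]=g_I^{J}\ne1$, so $c_{0,I}=[\hat\tau_I,\hat\tau_0]$ has nontrivial image $g_I^{\,J^{-1}\cdot(\text{unit})}$ — in any case nontrivial — in $N_I$, while $c_{0,k}$ for $k\ne I$ and $c_{j,k}$ for $j,k\ne 0$ project trivially because the corresponding generators commute in the Heisenberg quotient attached to $I$. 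For the $a_i$: $s(\tau_i)^p$ maps to $\hat\tau_i^p$, which lies in $U_p$-part only through $p$th-power relations; since $H_p$ has exponent $p$ (as $p$ is odd), $\hat\tau_i^p=1$ in each Heisenberg quotient, so $\bar a_i=0$ for all $i$. Finally, counting: the $\bar c_{0,k}$, $1\le k\le r$, are $r$ elements each nontrivial exactly in its own factor $N_k$, hence they form a basis of $\bar N_H\simeq(\mathbb Z/p\mathbb Z)^r$.

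The main obstacle I anticipate is the bookkeeping in the commutator computation: matching the abstract Heisenberg commutator relation $[\hat\tau_\beta,\hat\tau_\alpha]\in U_p$ with the concrete indices, keeping track of the unit $J$ (and that $[\hat\tau_I,\hat\tau_0]$ rather than $[\hat\tau_I,\hat\tau_0^J]$ is what appears in $c_{0,k}$), and checking that $\tau_i$ for $i\notin\{0,I\}$ genuinely acts trivially on $\sqrt[p]{\gamma_I}$ — i.e. that $\gamma_I\in K_0^*$ up to the relevant field so the only $\tau$'s interacting with it are $\tau_0$ and $\tau_I$. This last point is where I would spend the most care: $\gamma_I=\prod_{j=1}^{p-1}(1-w^{pj-I})^j\in\mathbb Q(\zeta_{p^2})^*$, which is fixed (up to $p$th powers in $L^*$) by every $\tau_i$ with $1\le i\le r$, $i\ne I$, since those $\tau_i$ fix $\mathbb Q(\zeta_{p^2})$ pointwise; and by \eqref{Egaloisactionc} the class of $\gamma_I$ in $L^*/(L^*)^p$ is moved by $\tau_0$, while it is moved by $\tau_I$ through the Kummer action on $\sqrt[p]{\beta}$ baked into the Heisenberg construction. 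Once that is pinned down, items (1)–(3) and the basis statement follow formally.
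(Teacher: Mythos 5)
Your computations of the images of $a_i$ and $c_{j,k}$ in each factor $N_I$ follow essentially the same route as the paper: triviality of $\bar a_i$ from the exponent-$p$ property of $H_p$, triviality of $\bar c_{j,k}$ (and of $\bar c_{0,k}$ in $N_I$ for $k\neq I$) because the relevant lifts commute, and nontriviality of $\bar c_{0,I}$ in $N_I$. For the last point you invoke the nondegeneracy of the commutator pairing on $H_p/U_p$ together with the fact that $J$ is a unit mod $p$; that is a legitimate, more structural substitute for the paper's explicit computation with $\sqrt[p]{\gamma_k}$ (which the paper includes only because the statement in Sharifi's Equation~2.5 is given there without proof). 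The genuine gap is in how you establish $\bar N_H \simeq \times_{I\in\mathcal I} N_I$. Both disjointness arguments you propose fail. The valuation/support argument cannot distinguish the $\gamma_I$: every factor $1-w^{pj-I}$ is a generator of the unique prime of $\mathbb{Q}(\zeta_{p^2})$ above $p$, so each $\gamma_I$ is supported only at that single prime and has the same valuation $p(p-1)/2$ there (this is exactly what makes all the conductors in Lemma~\ref{Lheiscond} equal); no valuation data separates different $I$. The alternative argument is also insufficient: every $\tilde R_I$ contains $L$, and $L$ already contains all of the fields $F_J$, so linear disjointness of the $F_I$ over $K$ says nothing about whether the degree-$p$ extensions $\tilde R_I/L$ (equivalently, the classes of the $\gamma_I$ in $L^*/(L^*)^p$) are independent. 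Since your closing sentence uses the product decomposition to do the counting, the basis statement is not yet justified as written.

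The repair is the observation the paper makes at the start of its proof, and it needs nothing beyond what you have already computed. Restriction gives an injection $\bar N_H \hookrightarrow \times_{I} N_I$ because $\tilde R$ is the compositum of the $\tilde R_I$; surjectivity follows from your item-(3) computation itself, since for each $I$ the image of $c_{0,I}$ in $\bar N_H$ restricts nontrivially to $N_I$ and trivially to every $N_k$ with $k\neq I$, which forces $\tilde R_I$ to be disjoint from the compositum of the $\tilde R_k$, $k\neq I$. In other words, the decomposition and the basis $\{\bar c_{0,k}\}$ are consequences of the image computations, not inputs to them; once you reverse the logical order in this way, your argument closes up and coincides with the paper's.
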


\begin{proof}
With some risk of confusion, we use the same notation $\bar{a}_i$ and $\bar{c}_{j,k}$ 
to denote the images of $\bar{a}_i$ and $\bar{c}_{j,k}$ in ${\rm Gal}(R_I/F_I)$.
The claim is that $\bar{a}_i$ and $\bar{c}_{j,k}$ are trivial for each $I$ when $j \not = 0$ 
and that $\bar{c}_{0,k}$ is non-zero if and only if $k=I$.

The last part of this claim implies the main statement of Proposition \ref{Ptrivialinquotient}. 
If $\bar{c}_{0,I}$ has a non-trivial image in $N_I$ but a trivial image in $N_k$ for $k \not = I$,
then $R_{I}$ is disjoint from the compositum of 
$\{R_{k} \mid 1 \leq k \leq r, k \not = I\}$.

Let $w=\zeta_{p^2}$ and let $t_{I}=\beta$.  
Recall that $F_I=K(w, \sqrt[p]{t_{I}})$ and that ${\rm Gal}(F_I/K)$ is generated by $\sigma=\tau_0$ and 
$\tau_\beta=\tau_{I}$ where
$\sigma(w)=\zeta_p w = w^{1+p}$
and $\sigma(\sqrt[p]{t_{I}})=\sqrt[p]{t_{I}}$ and $\tau_\beta(w)=w$ and 
$\tau_\beta(\sqrt[p]{t_{I}})=\zeta_p \sqrt[p]{t_{I}}$.
For $\ell \not = 0,I$, the other automorphisms $\tau_\ell$ generating $Q$ act trivially on $F_I$. 

Recall that $R_I/K$ is a Heisenberg extension and that $\sigma$ and $\tau_1, \ldots, \tau_r$ 
extend to $\tilde{\sigma}$ and $\tilde{\tau}, \ldots, \tilde{\tau}_r$ in ${\rm Gal}(R_I/K)$.
By definition, the elements $\bar{a}_i$ and $\bar{c}_{j,k}$ in $N_I$ are
\[\bar{a}_i=\tilde{\tau}_i^p(\sqrt[p]{\gamma_I})/\sqrt[p]{\gamma_I},\]
and
\[\bar{c}_{j,k}=\tilde{\tau}_k\tilde{\tau}_j \tilde{\tau}_k^{-1} \tilde{\tau}_j^{-1}(\sqrt[p]{\gamma_I})/\sqrt[p]{\gamma_I}.\]

\begin{enumerate}

\item Then $\bar{a}_i$ is trivial because $\tilde{\tau}_i$ has order $p$ in the Heisenberg group $H_p$.

\item If $j \not = 0$, then $\tilde{\tau}_j$ and $\tilde{\tau}_k$ fix $\sqrt[p]{\gamma_I}$, 
so $\bar{c}_{j,k}$ is trivial.

\item 
We compute $\bar{c}_{0,k}$ in $N_I$. 
Now 
$\bar{c}_{0,k} = [\tilde{\tau}_k, \tilde{\sigma}] (\sqrt[p]{\gamma_I})/\sqrt[p]{\gamma_I}$.
Since 
$\tilde{\sigma} =\tilde{\tau}_\alpha^{-I}$, the following quantity is non-trivial exactly when
$\bar{c}_{0,k}$ is non-trivial:
\begin{equation} \label{Etrivialc}
[\tilde{\tau}_k, \tilde{\tau}_\alpha] (\sqrt[p]{\gamma_I})/\sqrt[p]{\gamma_I}
=
\tilde{\tau}_k\tilde{\tau}_\alpha \tilde{\tau}_k^{-1} \tilde{\tau}_\alpha^{-1}(\sqrt[p]{\gamma_I})/\sqrt[p]{\gamma_I}.
\end{equation}

By \eqref{Egaloisactionc}, for some $z \in \mu_p$,
\[\tilde{\tau}_\alpha(\sqrt[p]{\gamma_I}) = z \frac{1-w^{-I}}{\sqrt[p]{1-\zeta_p^{-I}}} \sqrt[p]{\gamma_I} = z \frac{1-w^{-I}}{\sqrt[p]{t_I}} \sqrt[p]{\gamma_I} .\]
The value of $z$ is not important since it is fixed by $Q$; 
set $z=1$. 

If $k \not = I$,
then $\tilde{\tau}_k$ fixes $\sqrt[p]{t_I}$ and $\omega$ and thus commutes with the action of 
$\tilde{\tau}_\alpha$ on $\sqrt[p]{\gamma_I}$. 
Thus the quantity in \eqref{Etrivialc} is trivial and $\bar{c}_{0,k}$ is trivial in $N_I$ when $k \not = I$.

Now consider the case that $k=I$.
The result is stated in \cite[Equation~2.5]{Sharifi}; 
since no details are included there, 
we include a proof for the benefit of the reader. 

Note that $\tilde{\tau}_\alpha$ fixes $\sqrt[p]{t_k}$.  Thus
\[\tilde{\tau}_\alpha^2(\sqrt[p]{\gamma_k}) = 
\frac{(1- \tilde{\tau}_\alpha(w)^{-k})}{\sqrt[p]{t_k}} \frac{(1-w^{-k})}{\sqrt[p]{t_k}} \sqrt[p]{\gamma_k}.\]
It follows that
\[\tilde{\tau}_\alpha^{p-1}(\sqrt[p]{\gamma_k}) = 
\frac{1}{(\sqrt[p]{t_k})^{p-1}}(1-w^k)(1-\tilde{\tau}_\alpha(w^k)) \cdots (1-\tilde{\tau}_\alpha^{p-2}(w^{k}))\sqrt[p]{\gamma_k}.\]

Recall that $\tilde{\tau}_k$ acts by multiplication by $\zeta_p$ on $\sqrt[p]{t_k}$. 
By modifying $\tilde{\tau}_k$ by an element of ${\rm Gal}(\tilde{R}/L)$, 
we may assume that $\tilde{\tau}_k(\sqrt[p]{\gamma_k})=\zeta_p\sqrt[p]{\gamma_k}$.
It follows that
\begin{equation} \label{Elonglong}
\tilde{\tau}_k^{p-1} \tilde{\tau}_\alpha^{p-1}(\sqrt[p]{\gamma_k}) = 
\frac{1}{\zeta_p (\sqrt[p]{t_k})^{p-1}}(1-w^k)(1-\tilde{\tau}_\alpha(w^k)) \cdots (1-\tilde{\tau}_\alpha^{p-2}(w^{k}))
\zeta_p \sqrt[p]{\gamma_k}.
\end{equation}
Applying $\tilde{\tau}_\alpha$ to the right hand side of \eqref{Elonglong} yields
\begin{equation}
\frac{1}{(\sqrt[p]{t_k})^{p-1}}(1-\tilde{\tau}_\alpha(w^k))(1-\tilde{\tau}^2_\alpha(w^k)) \cdots (1-\tilde{\tau}_\alpha^{p-1}(w^{k})) \frac{1-w^{-k}}{t_k} \sqrt[p]{\gamma_k}.
\end{equation}
By Lemma \ref{Lbetanorm},
\[\tilde{\tau}_\alpha \tilde{\tau}_k^{p-1} \tilde{\tau}_\alpha^{p-1}(\sqrt[p]{\gamma_k}) = \frac{1}{t_k} N_{K_\alpha/K}(1-w^k) \sqrt[p]{\gamma_k} =  \sqrt[p]{\gamma_k}.\]
Thus 
\[[\tilde{\tau}_k, \tilde{\tau}_\alpha] (\sqrt[p]{\gamma_k}) = 
\tilde{\tau}_k \tilde{\tau}_\alpha \tilde{\tau}_k^{p-1} \tilde{\tau}_\alpha^{p-1}(\sqrt[p]{\gamma_k}) 
= \zeta_p \sqrt[p]{\gamma_k}.\]
Thus $\bar{c}_{0,k}$ is non-trivial in $N_k$, which completes the proof.
\end{enumerate}
\end{proof}

\begin{proposition} \label{PsimplifyH1}
If $M$ is a $G$-module on which $\bar{N}_H$ acts trivially, 
then $\rm{H}^1(\bar{N}_H, M)^Q \simeq (M^Q)^r$.
\end{proposition}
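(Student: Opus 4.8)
The plan is to reduce the computation of $\rm{H}^1(\bar{N}_H, M)^Q$ to a statement about $Q$-invariant homomorphisms from $\bar{N}_H$ to $M$, and then use Proposition~\ref{Ptrivialinquotient} to pin down the $Q$-action on $\bar{N}_H$. First I would invoke the standing setup: since $\bar{N}_H = {\rm Gal}(\tilde{R}/L)$ acts trivially on $M$, every class in $\rm{H}^1(\bar{N}_H, M)$ is represented by a genuine homomorphism $\bar\phi: \bar{N}_H \to M$, and such a class is $Q$-invariant exactly when $\bar\phi$ is $Q$-equivariant for the conjugation action of $Q$ on $\bar{N}_H$ and the given action on $M$ (this is the same reasoning used in Section~\ref{Salgebraker} and in Lemma~\ref{Lcentral}).

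The key step is to identify the conjugation action of $Q$ on $\bar{N}_H$. By Proposition~\ref{Ptrivialinquotient}, $\bar{N}_H \simeq \times_{I \in {\mathcal I}} N_I \simeq (\mathbb{Z}/p\mathbb{Z})^r$ with basis $\{\bar{c}_{0,k} \mid 1 \le k \le r\}$, and the image of $\bar{c}_{0,k}$ is supported on the single factor $N_k$. I would show that $Q$ acts trivially on each $N_I = {\rm Gal}(\tilde{R}_I/L)$: the generators $\tau_\ell$ for $\ell \ne 0, I$ act trivially on $F_I$ and hence on $\tilde{R}_I$, while $\tilde{R}_I/L$ is a $\mathbb{Z}/p\mathbb{Z}$-extension on which conjugation by any lift of $\tau_0$ or $\tau_I$ is an automorphism of $\mathbb{Z}/p\mathbb{Z}$ fixing the subgroup pointwise (alternatively, since $\bar{N}_H \subset N$ and $N$ is central in the relevant quotient — more precisely the commutator structure is already encoded in the $\bar{c}_{j,k}$, which lie in $\bar{N}_H$ itself and whose further commutators with $Q$ vanish because $\bar{G}_H/\bar{N}_H \simeq Q$ is abelian and $\bar{N}_H$ is a $\mathbb{Z}/p\mathbb{Z}^r$ with the $Q$-action factoring through the abelian quotient). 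Concretely, one checks that conjugating $\bar{c}_{0,k}$ by any $\tau_\ell$ leaves it fixed, using that the $\bar{c}_{0,k}$ already exhaust the relations. Once triviality of the $Q$-action on $\bar{N}_H$ is established, Lemma~\ref{Lcentral} applies directly with $\rho = {\rm dim}_{\mathbb{F}_p}(\bar{N}_H) = r$, giving $\rm{H}^1(\bar{N}_H, M)^Q \simeq (M^Q)^r$.

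The main obstacle I anticipate is verifying cleanly that $Q$ acts trivially by conjugation on $\bar{N}_H$, rather than merely on each individual $N_I$ viewed in isolation: a priori, conjugation by $\tau_0$ could mix the factors $N_I$ or scale them, since $\tau_0$ does not act trivially on the fields $F_I$. The cleanest route is probably to argue at the level of the group $\bar{G}_H = {\rm Gal}(\tilde{R}/K)$: it sits in $1 \to \bar{N}_H \to \bar{G}_H \to Q \to 1$ with $\bar{N}_H$ elementary abelian and central — centrality follows because $[\bar{G}_H, \bar{N}_H] \subseteq \Phi(\bar{G}_H) \cap \bar{N}_H$, and one checks this commutator subgroup is trivial using that $\bar{G}_H$ is (an inverse limit of) products of Heisenberg groups $H_p$ in which $\bar{N}_H$ maps into the centers $U_p$. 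Given centrality, the conjugation action of $Q$ on $\bar{N}_H$ is automatically trivial, and the result follows. I would present the centrality argument as the core of the proof, then cite Lemma~\ref{Lcentral} to conclude.
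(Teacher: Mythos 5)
Your proposal is correct and follows essentially the same route as the paper: identify $\rm{H}^1(\bar{N}_H,M)^Q$ with $Q$-invariant homomorphisms, use Proposition~\ref{Ptrivialinquotient} to get the basis $\{\bar{c}_{0,k}\}$ of $\bar{N}_H\simeq(\mathbb{Z}/p\mathbb{Z})^r$, and deduce triviality of the conjugation action from the fact that each $N_I\simeq U_p$ is central in the Heisenberg group $H_p$, then conclude via Lemma~\ref{Lcentral}. The gap you flag midway (possible scaling or mixing of the factors $N_I$ under $\tau_0$) is resolved exactly by this centrality argument, which is the paper's one-line justification.
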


\begin{proof}
Since $M$ is a trivial $\bar{N}_H$-module,
$\rm{H}^1(\bar{N}_H, M)^Q \simeq \mathrm{Hom}(\bar{N}_H, M)^Q$.
By Proposition  \ref{Ptrivialinquotient},
$\bar{N}_H$ has basis $\{\bar{c}_{0,k} \mid 1 \leq k \leq r\}$.
Thus $\bar{\phi}$ is determined uniquely by the values 
$\mu_k=\bar{\phi}(\bar{c}_{0,k}) \in M$.
Since $\bar{N}_I \simeq U_p$ is central in $H_p$, 
the homomorphism $\bar{\phi}$ is $Q$-invariant if and only if 
$\mu_k \in M^Q$ for $1 \leq k \leq r$ by Lemma \ref{Lcentral}.
\end{proof}

\subsection{The kernel of $d_2$ for Heisenberg extensions} \label{SmainHes}

Consider the map \[d_{2, H}: \rm{H}^1(\bar{N}_H, M)^Q \to \rm{H}^2(Q, M).\]
By Lemma \ref{Ld2d2bar}, ${\rm Ker}(d_{2, H}) \subset {\rm Ker}(d_2)$.
By Proposition \ref{PsimplifyH1}, we can study the isomorphic image
of ${\rm Ker}(d_{2,H})$ in $(M^Q)^r$. 

\begin{theorem} \label{PHeisKer}
Let $X$ be the Fermat curve of degree $p$.  
Let $M$ be a subquotient of $\rm{H}_1(U, Y; \mathbb{Z}/p \mathbb{Z})$.
Then 
${\rm Ker}(d_{2,H})$ is isomorphic to the set of all $(\mu_1, \ldots, \mu_r) \in (M^Q)^r$ such that 
\[(\mu_1, \ldots, \mu_r) = ((1-\tau_1)m_0 - (1-\tau_0)m_1, \ldots, (1-\tau_r)m_0 - (1-\tau_0)m_r),\]
for some $m_0, \ldots, m_r \in M$ such that $(1-\tau_k)m_j - (1-\tau_j)m_k = 0$ for all $1 \leq j < k \leq r$.
When $p=3$, we further require that $y_0^2y_1^2 m_0 = 0$.
\end{theorem}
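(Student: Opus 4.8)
The plan is to reduce the statement to the criterion of Theorem~\ref{Tkerd2} by combining three facts already in place: the explicit structure of $\bar N_H$ from Proposition~\ref{Ptrivialinquotient}, the identification $\rm{H}^1(\bar N_H, M)^Q \simeq (M^Q)^r$ from Proposition~\ref{PsimplifyH1}, and the relation $d_{2,H} = d_2\circ\iota$ from Lemma~\ref{Ld2d2bar}. Under the basis $\{\bar c_{0,k}\mid 1\le k\le r\}$ of $\bar N_H$, a class in $\rm{H}^1(\bar N_H, M)^Q$ is a $Q$-invariant homomorphism $\bar\phi\colon\bar N_H\to M$, and the isomorphism of Proposition~\ref{PsimplifyH1} sends $\bar\phi$ to $(\mu_1,\ldots,\mu_r)\in (M^Q)^r$ with $\mu_k = \bar\phi(\bar c_{0,k})$. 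So it suffices to show that $\bar\phi\in{\rm Ker}(d_{2,H})$ precisely when the elements $m_0,\ldots,m_r$ of the statement exist.

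First I would apply Theorem~\ref{Tkerd2} to the homomorphism $\phi = \iota(\bar\phi)\colon N\to M$, which by Lemma~\ref{LdecomposeH1} is the composite of the reduction $N\twoheadrightarrow\bar N_H$ with $\bar\phi$; by Lemma~\ref{Ld2d2bar}, $\bar\phi\in{\rm Ker}(d_{2,H})$ if and only if $\phi\in{\rm Ker}(d_2)$. Because $\phi$ factors through $\bar N_H$, we have $\phi(a_i)=\bar\phi(\bar a_i)$ and $\phi(c_{j,k})=\bar\phi(\bar c_{j,k})$, and Proposition~\ref{Ptrivialinquotient} gives $\bar a_i = 0$ for $0\le i\le r$, $\bar c_{j,k}=0$ for $j\ne 0$, and $\bar c_{0,k}\mapsto\mu_k$. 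Substituting into Theorem~\ref{Tkerd2}, the conditions for $\phi\in{\rm Ker}(d_2)$ become: there exist $m_0,\ldots,m_r\in M$ with $N_{\tau_i}m_i = 0$ for $0\le i\le r$, with $\mu_k = (1-\tau_k)m_0 - (1-\tau_0)m_k$ for $1\le k\le r$, and with $(1-\tau_k)m_j - (1-\tau_j)m_k = 0$ for $1\le j<k\le r$. These last two families are exactly the relations in the statement.

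Next I would simplify the norm conditions $N_{\tau_i}m_i=0$ using Theorem~\ref{Tnorm}. On $M$ the operator $\tau_i$ acts by multiplication by $B_i\in\Lambda_1$, so $N_{\tau_i}$ acts as the norm of $B_i$ in $\Lambda_1$; by Theorem~\ref{Tnorm} this norm is $0$ for all $i$ when $p\ge 5$, making those conditions vacuous, while for $p=3$ we have $r=1$, $N_{\tau_1}=0$, and $N_{\tau_0}=y_0^2y_1^2$, so the single surviving constraint is $y_0^2y_1^2\,m_0 = 0$, matching the extra requirement in the statement. The converse direction is formal: any $(\mu_1,\ldots,\mu_r)\in (M^Q)^r$ satisfying these relations defines a $Q$-invariant $\bar\phi$ with $\bar\phi(\bar c_{0,k})=\mu_k$, and the computation above shows $\iota(\bar\phi)$ meets the criterion of Theorem~\ref{Tkerd2}, hence $\bar\phi\in{\rm Ker}(d_{2,H})$.

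The argument is essentially bookkeeping, so I do not expect a serious obstacle; the one step to carry out with care is the $p=3$ case, where $N_{\tau_0}$ is nonzero. There the identity $\phi(a_0)=0$, forced because $\bar a_0$ is trivial in $\bar N_H$ by Proposition~\ref{Ptrivialinquotient}, must be reconciled with the equation $\phi(a_0)=-N_{\tau_0}m_0 = -y_0^2y_1^2m_0$ coming from Theorem~\ref{Tkerd2}, and this is what produces the condition $y_0^2y_1^2m_0=0$. Note that this condition is automatically satisfied when $M = \rm{H}_1(X;\mathbb{Z}/3\mathbb{Z})$, since $y_0^2y_1^2$ vanishes there.
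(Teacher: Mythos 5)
Your proposal is correct and follows essentially the same route as the paper's proof: identify $\rm{H}^1(\bar N_H,M)^Q$ with $(M^Q)^r$ via Proposition \ref{PsimplifyH1}, translate membership in the kernel into the conditions of Theorem \ref{Tkerd2} using the vanishing of $\bar a_i$ and $\bar c_{j,k}$ ($j\neq 0$) from Proposition \ref{Ptrivialinquotient}, and dispose of the norm conditions via Theorem \ref{Tnorm}, which is exactly where the extra condition $y_0^2y_1^2m_0=0$ appears for $p=3$. Your explicit passage through $\iota$ and Lemma \ref{Ld2d2bar} only spells out a step the paper leaves implicit.
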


\begin{proof}
By Proposition \ref{PsimplifyH1}, there is an isomorphism $\rm{H}^1(\bar{N}_H, M)^Q \to (M^Q)^r$, 
where $\bar{\phi} \mapsto (\mu_1, \ldots, \mu_r)$ where 
$\mu_k =\bar{\phi}(\bar{c}_{0,k})$.
Recall the conditions on the tuple $(\mu_1, \ldots, \mu_r)$ in Theorem \ref{Tkerd2} 
which are equivalent to $\bar{\phi}$ being in ${\rm Ker}(\bar{d}_{2,H})$.
By Theorem \ref{Tnorm}, the constraint $\bar{\phi}(\bar{a}_i) = - N_{\tau_i} m_i$ gives no constraint when $p \geq 5$, because both sides equal zero;
when $p=3$, then the constraint is only satisfied if $0 =\bar{\phi}(\bar{a}_0)  =- y_0^2y_1^2 m_0$.
\end{proof} 

\begin{remark}
By \cite[9.6 and 10.5.2]{Anderson}, $1-\tau_k \in \langle y_0y_1\rangle$.
By \cite[Proposition~6.2]{WINF:birs}, $\langle y_0y_1\rangle \simeq \rm{H}_1(U; \mathbb{Z}/p \mathbb{Z})$.
So $(\mu_1, \ldots, \mu_r) \in (\rm{H}_1(U; \mathbb{Z}/p \mathbb{Z})^Q)^r$.
\end{remark}

\subsection{Computing ${\rm Ker}(d_{2, H})$ for small $p$} \label{Sheisenbergbig} 

We compute ${\rm Ker}(d_{2, H})$ for small $p$ by finding matrices for the action of 
${\rm Gal}(L/K)$ on $\rm{H}_1(U,Y; \mathbb{Z}/p \mathbb{Z})$; the explicit formulas for this action are found in \cite[Theorem 3.5]{WINF:Baction}.
From this, we determine matrices for the action on $\rm{H}_1(U; \mathbb{Z}/p \mathbb{Z})$ and $\rm{H}_1(X; \mathbb{Z}/p \mathbb{Z})$.
We did these computations using Magma \cite{Magma}.

\begin{example} \label{Eheiskerd23}
When $p=3$, then $\bar{\phi} \in \rm{H}^1(\bar{N}_H, M)$ is determined by 
$\mu_1=\bar{\phi} (\bar{c}_{0,1})$.
Also, $\bar{\phi}$ is $Q$-invariant if and only if $\mu_1 \in M^Q$.

Now $\bar{\phi} \in {\rm Ker}(d_{2,H})$
if and only if $\mu_1$ is in the image of the map 
$T: M^2 \rightarrow M$ given by $(m_0, m_1) \mapsto (1-\tau)m_0 - (1-\sigma)m_1$  (condition $c_{0,1}$)
and $y_0^2y_1^2 m_0 = 0$ (condition $a_0$).  

For $p=3$, we compute the dimension of ${\rm Ker}(d_{2, H})$ for several choices of $M$: 
\[\begin{array}{ | c | c | c | c | } 
\hline
 M  & \rm{H}_1(U, Y) & \rm{H}_1(U) & \rm{H}_1(X) \\ \hline
  \mathrm{dim}(M^Q) & 5& 3& 2 \\ \hline
  \mathrm{dim}(\mathrm{im}(T \mid_{{\rm cond} \ a_0})) & 4& 1& 0 \\ \hline
 \mathrm{dim} (\mathrm{Ker}(d_{2,H}))= \mathrm{dim} (M^Q \cap \mathrm{im}(T \mid_{{\rm cond} \ a_0})) & 3 & 1 &0 \\ \hline
\end{array}.\]

In particular, when $M=H_1(U,Y; \mathbb{Z}/3\mathbb{Z})$, then $\bar{\phi} \in {\rm Ker}(\bar{d}_{2,H})$ if and only if 
$\mu_1 \in \rm{H}_1(U)^Q$.  
\end{example}

\begin{example}
When $p=5$, then $\bar{\phi} \in \rm{H}^1(\bar{N}_H, M)$ is determined by 
$\mu_1=\bar{\phi} (\bar{c}_{0,1})$ and $\mu_2=\bar{\phi} (\bar{c}_{0,2})$.
Also, $\bar{\phi}$ is $Q$-invariant if and only if $\mu_1, \mu_2 \in M^Q$.

Now $\bar{\phi} \in {\rm Ker}(d_{2,H})$
if and only if $(\mu_1, \mu_2)$ is in the image of the map 
$T: M^3 \rightarrow M^2$ given by
\[(m_0, m_1, m_2) \mapsto ((1-\tau_1)m_0 - (1-\tau_0)m_1, (1-\tau_2)m_0 - (1-\tau_0)m_2),\] 
(condition $c_{0,1}$, $c_{0,2}$) and
$(1-\tau_2)m_1-(1-\tau_1)m_2=0$ (condition $c_{1,2}$).
For $p=5$, we compute:
\[\begin{array}{ | c | c | c | c | } 
\hline
 M  & \rm{H}_1(U; Y) & \rm{H}_1(U) & \rm{H}_1(X) \\ \hline
  \mathrm{dim}(M^Q) & 11 & 9 & 8 \\ \hline
  \mathrm{dim}((M^Q)^r) & 22 & 18 & 16 \\ \hline
  \mathrm{dim}(\mathrm{im}(T \mid_{{\rm cond} \  c_{1,2}})) & 16  &  8 & 4 \\ \hline
 \mathrm{dim}(\mathrm{Ker}(d_{2,H}))= \mathrm{dim}((M^Q)^r \cap \mathrm{im}(T \mid_{{\rm cond} \  c_{1,2}})) & 11  & 7 & 4  \\ \hline
\end{array}.\]
\end{example}

\begin{example}
For $p=7$, we compute:
\[\begin{array}{ | c | c | c | c | } 
\hline
 M  & \rm{H}_1(U; Y) & \rm{H}_1(U) & \rm{H}_1(X) \\ \hline
  \mathrm{dim}(M^Q) & 17  & 15 & 14  \\ \hline
  \mathrm{dim}((M^Q)^r) & 51 & 45 & 42 \\ \hline
  \mathrm{dim}(\mathrm{im}(T \mid_J)) & 36  & 23  & 16 \\ \hline
 \mathrm{dim} (\mathrm{Ker}(d_{2,H}))= \mathrm{dim}((M^Q)^r \cap \mathrm{im}(T \mid_J)) & 19  & 14 & 10  \\ \hline
\end{array}.\]
\end{example}

Here the image of $T:M^4 \to M^3$ are the triples $(\mu_1, \mu_2, \mu_3)$ satisfying conditions 
$c_{0,1}$, $c_{0,2}$, and $c_{0,3}$, and $J$ denotes the conditions coming from $c_{1,2}$, $c_{1,3}$, and $c_{2,3}$.

\begin{remark} In Theorem \ref{PHeisKer}, taking $m_0 \in M$ and $m_i = 0$ for $1 \leq i \leq r$, 
we see that ${\rm Ker}(d_{2,H})$ contains a subspace isomorphic to the set
\[\{(\mu_1, \ldots, \mu_r) \in (M^Q)^r \mid  (\mu_1, \ldots, \mu_r) = ((1-\tau_1)m_0, \ldots, (1-\tau_r)m_0)\}.\]
Thus, 
\begin{equation} \label{Elowerbound}
{\rm dim}({\rm Ker}(d_{2,H})) \geq \mathrm{dim} (\mathrm{Im}((B_{\tau_1}-1) \displaystyle |_M) \cap M^Q).
\end{equation} 
When $p=3,5,7$ and when $M=\rm{H}_1(X)$, the lower bound in \eqref{Elowerbound} is in fact an equality. 
\end{remark}




\subsection{Other unitary extensions}

Recall that $R_I/K$ is a Heisenberg degree $p^3$ extension for $1 \leq I \leq r$.
Consider the group $U_4$ which has order $p^6$ and exponent $p$.
Using the results of \cite[Section 3.1]{minactan}, 
it is possible to construct a $U_4$-Galois extension of $K$ from 
$R_I/K$ and $R_J/K$, when $1 \leq I < J \leq r$.  
For $p \geq 5$, this yields a degree $p^3$ elementary abelian $p$-group extension $\bar{E}_U/L$.  
Let $\bar{N}_U = {\rm Gal}(\bar{E}_U/L)$.
By \cite[Claim, page 1036-1037]{minactan}, the lifts of $\tau_I$ and $\tau_J$ commute in $U_4$.
This implies that the image of $c_{I,J}$ is trivial in $\bar{N}_U$.
From this, we expect that ${\rm dim}({\rm Ker}(d_2))$ grows by at least ${\rm dim}(M^Q)$
in the passage from $\bar{N}_H$ to $\bar{N}_U$.

\section{The kernel of the transgression map when $p=3$} \label{Supperbound}

In this section, $p=3$.
Then $K=\mathbb{Q}(\zeta_3)$ and $L$ is the splitting field of $1-(1-x^3)^3$.
Let $\sigma=\tau_0$ and $\tau=\tau_1$ be the generators of 
$Q ={\rm Gal}(L/K) \simeq (\mathbb{Z}/3 \mathbb{Z})^2$ from Section \ref{Sgalois}.

Then $E/L$ is the maximal elementary abelian $3$-group extension of $L$ ramified only over $3$.
Also $G={\rm Gal}(E/K)$ and $N={\rm Gal}(E/L)$.
In Corollary \ref{PrankN3}, we show that ${\rm dim}_{\mathbb{F}_3}(N)=10$.

Let $M=\rm{H}_1(U, Y; \mathbb{Z}/3 \mathbb{Z})$ be the relative homology of the Fermat curve of degree $3$. 
In Lemma \ref{LbasisA}, we find a basis for $\rm{H}^1(N, M)^Q$; it has dimension $18$.

In Proposition \ref{Pclass3}, we determine the element $\omega \in \rm{H}^2(Q, N)$ classifying the exact sequence 
\begin{equation} \label{Eshortexactsec4}
1 \to N \to G \to Q \to 1.
\end{equation}
Recall that $d_2: \rm{H}^1(N, M)^Q \to \rm{H}^2(Q, N)$ is the transgression map.

The main result of the section is Corollary \ref{Cfinish3}, in which we determine ${\rm Ker}(d_2)$ completely 
when $p=3$: in particular, we show
that it has dimension $5$ and is determined by a degree $3^5$  
extension of $K$ whose Galois group is non-abelian and has exponent $9$; replacing $M$ by $\rm{H}_1(X; \mathbb{Z}/3\mathbb{Z})$, 
we show that ${\rm Ker}(d_2)$ is determined by the Heisenberg extension of $K$.

Similar calculations for $p=5$ appear out of reach, since ${\rm deg}(L/{\mathbb Q})=500$ in that case.

\subsection{Ray class fields when $p=3$} \label{Srayclass3}

A Magma computation shows that $\mathop{\rm Cl}(L)$ is trivial.
By Proposition \ref{Prational}, there is a unique prime $\mathfrak{p}$ of $L$ above $p$.
We compute that $\mathfrak{p} = \langle \zeta_{9}, \sqrt[3]{t_1} \rangle$.

Let $L_{\mathbf{m}}$ (resp.\ $\mathop{\rm Cl}_{\mathbf{m}}(L)$) denote the ray class field (resp.\ group) of $L$ of modulus $\mathbf{m}$.

\begin{corollary} \label{PrankN3}
When $p=3$, then $N=\mathop{\rm Cl}_{\mathfrak{p}^{28}}(L)$ and ${\rm dim}_{\mathbb{F}_3}(N)=10$.
\end{corollary}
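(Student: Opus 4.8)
The plan is to identify $N = \Gal(E/L)$ with a ray class group of $L$ and then pin down both the modulus exponent and the $\mathbb{F}_3$-dimension by a Magma computation, using the structural results of Sections \ref{Sregular} and \ref{Srayclass}. First I would recall that, since $p=3$ is regular (indeed $\Cl(L)$ is trivial, so in particular $\Cl(L)[p]=0$), Proposition \ref{Prational} gives a unique prime $\mathfrak{p}$ of $L$ above $3$ and the dimension formula $\dim_{\mathbb{F}_3}(N) = 1 + d/2$. Here $d = (p-1)p^{r+1} = 2\cdot 3^{2} = 18$, so $\dim_{\mathbb{F}_3}(N) = 1 + 9 = 10$; this is the dimension claim, and it follows purely formally from Proposition \ref{Prational} without any machine computation. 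The content of the corollary is therefore the identification $N = \Cl_{\mathfrak{p}^{28}}(L)$.

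For that identification, the key point is that $E$ is by definition the maximal elementary abelian $3$-extension of $L$ unramified outside $\mathfrak{p}$, so $N$ is the maximal elementary abelian $3$-quotient of every ray class group $\Cl_{\mathfrak{p}^i}(L)$ for $i$ large enough; equivalently, $N \simeq \Cl_{\mathfrak{p}^i}(L)[3]$-Frattini-quotient once $i$ is past the stabilization index. Since $\Cl(L)[3]=0$, Lemma \ref{Lstabilize} tells us that the $3$-ranks of $\Cl_{\mathfrak{p}^i}(L)$ and of $(\mathcal{O}_L/\mathfrak{p}^i)^\times$ stabilize at the same index $i$, so it suffices to locate the exact stabilization index of the latter. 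Corollary \ref{rrbound} already guarantees that the $3$-rank of $(\mathcal{O}_L/\mathfrak{p}^{n+1})^\times$ has reached its final value $d+1 = 19$ once $n \geq p^{r+2} = 3^{3} = 27$, i.e. once the exponent is at least $28$; combined with Proposition \ref{PclassGL} (which shows $\dim_{\mathbb{F}_3}(N)=10$ while $\dim_{\mathbb{F}_3}(\mathcal{O}_{\mathfrak{p}}^\times/3) = d+1 = 19$, so the stable ray class $3$-rank indeed drops to $10$ after quotienting by the global units), this shows $N$ is realized as the maximal elementary abelian $3$-quotient of $\Cl_{\mathfrak{p}^{28}}(L)$. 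The remaining step is to check by Magma that the group $\Cl_{\mathfrak{p}^{28}}(L)$ is already elementary abelian of rank $10$ (so that no Frattini quotient is needed and the exponent $28$ is not merely sufficient but is the value actually recorded), which also confirms via Lemma \ref{Lstabilize} that $28$ is at (or past) the stabilization index; concretely one computes $\Cl_{\mathfrak{p}^{i}}(L)$ for $i$ around $27,28$ using the explicit prime $\mathfrak{p} = \langle \zeta_9, \sqrt[3]{t_1}\rangle$ and reads off the abelian group structure.

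The main obstacle is the Magma computation itself: $L$ has degree $18$ over $\mathbb{Q}$ and one must compute a ray class group to a fairly high power of $\mathfrak{p}$, which is the kind of computation that is feasible here only because $p=3$ keeps $d$ small (the excerpt itself notes that the analogous $p=5$ computation, with $\deg(L/\mathbb{Q})=500$, is out of reach). On the theoretical side, the one subtlety to be careful about is matching conventions: $N$ is the \emph{Frattini quotient} $G_{L,p}'$, hence elementary abelian by construction, whereas a raw ray class group $\Cl_{\mathfrak{p}^{28}}(L)$ need not be; the statement $N = \Cl_{\mathfrak{p}^{28}}(L)$ is asserting that at exponent $28$ this ray class group happens to already be elementary abelian, which is exactly what the Magma output must verify. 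Everything else — the dimension count, the choice of exponent threshold — is forced by Proposition \ref{Prational}, Corollary \ref{rrbound}, Proposition \ref{PclassGL}, and Lemma \ref{Lstabilize}.
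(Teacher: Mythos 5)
Your proposal is correct and follows essentially the same route as the paper: Proposition \ref{Prational} gives $\dim_{\mathbb{F}_3}(N)=1+d/2=10$, Corollary \ref{rrbound} and Lemma \ref{Lstabilize} pin down the stabilization at modulus $\mathfrak{p}^{28}$, and a Magma computation shows $\mathop{\rm Cl}_{\mathfrak{p}^{28}}(L)\simeq(\mathbb{Z}/3\mathbb{Z})^{10}$, giving $N=\mathop{\rm Cl}_{\mathfrak{p}^{28}}(L)$. Your extra appeal to Proposition \ref{PclassGL} is harmless but not needed, and your remark that the Magma output must confirm the ray class group is already elementary abelian is exactly the point the paper's computation settles.
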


\begin{proof}
When $p=3$, then $e=d=18$, $f=1$, and $e_1 = 9$.
By Proposition \ref{Prational}, ${\rm dim}_{\mathbb{F}_3}(N)=10$.  
By Lemma \ref{rrbound}, $(\mathcal{O}_L/\mathfrak{p}^i)^{\times}$ has 3-rank 19 for $i \geq 28$.
A Magma computation \cite{Magma} shows that $\mathop{\rm Cl}_{\mathfrak{p}^{28}}(L) \simeq (\mathbb{Z}/3\mathbb{Z})^{10}$;
in particular, $\mathop{\rm Cl}_{\mathfrak{p}^{28}}(L)$ has $3$-rank $10$.
By Lemmas \ref{rrbound} and \ref{Lstabilize}, since the rank does not increase for modulus 
$\mathfrak{p}^i$ for $i > 28$, $N = \mathop{\rm Cl}_{\mathfrak{p}^{28}}(L)$. 
\end{proof}

More generally, we compute the $3$-rank of the 
ray class group $\mathop{\rm Cl}_{\mathfrak{p}^i}(L)$ with modulus $\mathfrak{p}^i$ for $1 \leq i \leq 28$. 
The rank increases at modulus $\mathfrak{p}^{i}$ when $i$ is one of the following values $m_1, \dots, m_{10}$:
$12, 15, 18, 20, 21, 23, 24, 26, 27, 28$.

\begin{notation} \label{NgenerateN}
For $1 \leq \ell \leq 10$, let $L_\ell$ denote the maximal elementary abelian extension of $L$ of modulus $\mathfrak{p}^{m_\ell}$ 
and consider $N_\ell={\rm Gal}(L_\ell/L)$. 
By Lemma~\ref{Lheiscond}, $N_1=\bar{N}_H$, where $\bar{N}_H$ is
the quotient of $N$ arising from the Heisenberg extension of $K$.
By Lemma~\ref{Lcyclocond}, $N_2=N_1N^*$, where $N^*$ is 
the quotient of $N$ arising from the cyclotomic extension of $K$.
\end{notation}

\subsection{Computation of $\rm{H}^1(N,M)^Q$} \label{SNMQ}

Let $M=\rm{H}_1(U,Y; \mathbb{Z}/3 \mathbb{Z})$.  Then ${\rm dim}_{\mathbb{F}_3}(M)=9$.  
Recall the definition of $y_0,y_1 \in \Lambda_1$ from 
Section \ref{SFermat} and the identification of $M$ with $\Lambda_1$ from Section~\ref{Shomology}.  
We consider
the following ordered basis $V_M$ of $M$:
\[\left[ 1, y_1, y_1^2, y_0, y_0y_1, y_0y_1^2, y_0^2, y_0^2y_1, y_0^2y_1^2 \right].\]
Let $B_\sigma, B_\tau \in \Lambda_1$ be such that $\sigma \cdot \beta = B_\sigma \beta$
and $\tau \cdot \beta = B_\tau \beta$. 
By \cite[Example~3.7]{WINF:Baction},
\[B_\sigma - 1 = y_0y_1 (1 - y_0 - y_1), \ B_\tau -1 = y_0y_1 ( - y_0 -y_1 + y_0y_1).\]
By \cite[Example 5.5(1)]{WINF:Baction}, when $p=3$ then
$\{y_0^2, y_0^2y_1, y_0^2y_1^2, y_0y_1^2, y_0^2y_1^2\}$ is a basis for $M^Q$.


\begin{lemma} \label{LbasisA} Let $p=3$ and $M=\rm{H}_1(U, Y; \mathbb{Z}/3 \mathbb{Z})$.
\begin{enumerate}
\item Then $\rm{H}^1(N, M)^Q = \rm{H}^1(N_7, M)^Q$ and $\dim_{\mathbb{F}_3}(\rm{H}^1(N_7, M)^Q)=18$.
\item There is a basis $\xi_1, \ldots, \xi_7$ for $N_7$ 
(also the images of $\{\xi_1, \ldots, \xi_\ell\}$ in $N_\ell$ are a basis for $N_\ell$ for $1 \leq \ell \leq 7$), 
such that $\rm{H}^1(N_7, M)^Q$ is spanned by the image of the 10-dimensional ${\rm{H}om}(N_2, M^Q)$ and the
8 maps $A_{11}, \ldots, A_{18}$:

$\begin{array}{cllll}
A_{11}:   & \xi_1 \mapsto y_1  & \xi_4 \mapsto  y_0y_1^2+y_0^2y_1^2 &  \xi_5 \mapsto y_0y_1^2 & \xi_7 \mapsto -y_0y_1^2-y_0^2y_1^2\\
A_{12}:   &   \xi_1 \mapsto y_0 & \xi_4\mapsto y_0^2y_1+y_0^2y_1^2 & \xi_5 \mapsto y_0^2y_1& \xi_7 \mapsto -y_0y_1^2-y_0^2y_1^2\\
A_{13} : &  \xi_1 \mapsto y_0y_1 & \xi_4 \mapsto y_0^2y_1^2 &  \xi_5 \mapsto y_0^2y_1^2 & \xi_7 \mapsto -y_0^2y_1^2\\
A_{14}  : &  \xi_3 \mapsto y_1^2 & \xi_4\mapsto -y_1^2 &  \xi_5 \mapsto y_1^2 & \xi_7 \mapsto y_1^2 \\
A_{15}  : &  \xi_3 \mapsto y_0y_1^2 & \xi_4\mapsto -y_0y_1^2 &  \xi_5 \mapsto y_0y_1^2 &  \xi_7 \mapsto y_0y_1^2\\
A_{16}  : &  \xi_3 \mapsto y_0^2  & \xi_4\mapsto -y_0^2 & \xi_5 \mapsto y_0^2 & \xi_7 \mapsto y_0^2\\
A_{17}  : &  \xi_3 \mapsto y_0^2y_1 & \xi_4\mapsto -y_0^2y_1 & \xi_5 \mapsto y_0^2y_1 & \xi_7 \mapsto y_0^2y_1 \\
A_{18}  : &  \xi_3 \mapsto y_0^2y_1^2  & \xi_4\mapsto -y_0^2y_1^2 & \xi_5 \mapsto y_0^2y_1^2 & \xi_7 \mapsto y_0^2y_1^2\\
\end{array}$

(All basis elements $\xi_i$ not listed map to $0$).

\item If $M_U=\rm{H}_1(U; \mathbb{Z}/3 \mathbb{Z})$, then $\rm{H}^1(N, M_U)^Q$ is spanned by 
the image of the 6-dimensional $\mathrm{Hom}(N_2, M_U^Q)$ and $\{A_{13}, A_{15}, A_{17}, A_{18}\}$; 
in particular, $\mathrm{dim}_{\mathbb{F}_3}(\rm{H}^1(N, M_U)^Q)=10$.

\item If $M_X=\rm{H}_1(X; \mathbb{Z}/3 \mathbb{Z})$, then $\rm{H}^1(N, M_X)^Q$ is spanned by the image of the $4$-dimensional
$\mathrm{Hom}(N_2, M_X^Q)$ and 
$\{A_{13}, A_{15}\}$; 
in particular, it follows that $\mathrm{dim}_{\mathbb{F}_3}(\rm{H}^1(N, M_X)^Q)=6$. 
\end{enumerate}
\end{lemma}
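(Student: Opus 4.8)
The plan is to reduce everything to explicit linear algebra over $\mathbb{F}_3$ using the ray class field data from Corollary \ref{PrankN3} and the known $Q$-action on $M$. First I would establish part (1): since $N = \mathop{\rm Cl}_{\mathfrak{p}^{28}}(L)$ and the $3$-rank jumps exactly at the moduli $\mathfrak{p}^{m_\ell}$ for $\ell = 1, \ldots, 10$, the quotient $N_7 = {\rm Gal}(L_7/L)$ is the Frattini quotient of $N$ by the subgroup corresponding to the last three jumps $m_8, m_9, m_{10} = 26, 27, 28$. The claim $\rm{H}^1(N,M)^Q = \rm{H}^1(N_7,M)^Q$ amounts to showing that every $Q$-invariant homomorphism $\phi : N \to M$ kills $\ker(N \to N_7)$. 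Since $\phi(N) \subseteq M^Q$ and $\dim_{\mathbb{F}_3}(M^Q) = 5$, while $N$ has dimension $10$, I would argue (via a Magma computation with the explicit ray class group generators) that the three generators of $N$ at moduli $26, 27, 28$ lie in the kernel of every $Q$-equivariant map to $M^Q$ — concretely, that their $Q$-orbit spans too large a subspace, or that they map into a subspace of $M$ forced to be zero by $Q$-invariance combined with the structure of $M$ as a $\Lambda_1$-module. Then $\dim \rm{H}^1(N_7,M)^Q = 18$ is obtained by directly computing the space of $Q$-equivariant homomorphisms $N_7 \to M$, i.e. the kernel of the linear map ${\rm Hom}_{\mathbb{F}_3}(N_7, M) \to \prod_{q} {\rm Hom}(N_7, M)$ sending $\phi \mapsto (\phi \circ q - q \circ \phi)_q$, where the $Q$-action on $N_7$ comes from the conjugation action in $G$ (equivalently, the Artin action on the ray class group).

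For part (2), I would exhibit the basis $\xi_1, \ldots, \xi_7$ of $N_7$ compatibly with the filtration by the $N_\ell$, choosing $\xi_\ell$ to be a lift of a generator of the $\ell$-th graded piece. The maps $A_{11}, \ldots, A_{18}$ are then explicit cocycles; the content is (a) checking each $A_{1i}$ is $Q$-invariant — a finite check using the matrices for $B_\sigma - 1$ and $B_\tau - 1$ given in the excerpt and the conjugation action of $Q$ on $\{\xi_1, \ldots, \xi_7\}$ — and (b) checking that together with the $10$-dimensional ${\rm Hom}(N_2, M^Q)$ (which is automatically $Q$-invariant since $N_2 = \bar N_H N^*$ is central in its relevant quotient by Propositions \ref{Ptrivialinquotient} and \ref{Lclasscyc}, so any map into $M^Q$ works) these $18$ maps are linearly independent and span. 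Independence is visible from the support: the $A_{1i}$ are nonzero on $\xi_1$ or $\xi_3$ (which $\xi_4, \xi_5, \xi_7 \notin N_2$), while ${\rm Hom}(N_2, M^Q)$ is supported on $\xi_1, \xi_2$ only — wait, I need to be careful about which $\xi_i$ generate $N_2$; I would pin down that $N_2$ is $2$-dimensional, spanned by $\xi_1, \xi_2$, so the $A_{1i}$ are genuinely new modulo ${\rm Hom}(N_2, M^Q)$ because they are nonzero on $\xi_3, \xi_4, \xi_5$ or $\xi_7$. The dimension count $10 + 8 = 18$ then matches part (1), proving spanning.

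For parts (3) and (4), I would use that $M_U = \rm{H}_1(U)$ is the submodule $\langle y_0 y_1 \rangle \subset \Lambda_1 = M$ and $M_X = \rm{H}_1(X)$ is the further quotient by ${\rm Stab}(\epsilon_0\epsilon_1)$, as recalled in Section \ref{Shomology}. Restricting a $Q$-equivariant $\phi : N_7 \to M$ to land in the subspace (resp.\ composing with the quotient) $M_U$ (resp.\ $M_X$) selects exactly those $A_{1i}$ whose image lies in $M_U$ (resp.\ survives in $M_X$): inspection of the displayed formulas shows $A_{13}, A_{15}, A_{17}, A_{18}$ have image in $\langle y_0y_1\rangle$ while $A_{11}, A_{12}, A_{14}, A_{16}$ do not, and among the former, $A_{17}, A_{18}$ involve $y_0^2y_1, y_0^2y_1^2$ which vanish in $\rm{H}_1(X)$, leaving $A_{13}, A_{15}$. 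Combined with $\dim {\rm Hom}(N_2, M_U^Q) = 6$ and $\dim {\rm Hom}(N_2, M_X^Q) = 4$ (from $\dim M_U^Q = 3$, $\dim M_X^Q = 2$ and $\dim N_2 = 2$), this gives $10$ and $6$. The main obstacle I anticipate is part (1): correctly identifying the conjugation action of $Q$ on the ray class group $\mathop{\rm Cl}_{\mathfrak{p}^{28}}(L)$ in Magma and verifying that the three ``top'' generators are invisible to every $Q$-equivariant homomorphism into $M$ — this requires matching the Artin-map description of $N$ with the abstract $G$-extension structure, and is where a subtle error could hide; the rest is bookkeeping with small explicit matrices.
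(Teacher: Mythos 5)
Your overall strategy coincides with the paper's: everything is reduced to a Magma computation with the ray class group, using the conjugation matrices of $\sigma,\tau$ on $N$ (resp.\ $N_7$) and solving for $Q$-equivariant homomorphisms into $M$ as the kernel of Kronecker-product matrices. However, two steps of your plan are flawed. First, in part (1) you assert that a $Q$-invariant homomorphism $\phi:N\to M$ satisfies $\phi(N)\subseteq M^Q$. This is false: $Q$-invariance means $\phi(q\cdot n)=q\cdot\phi(n)$, which forces $\phi(n)\in M^Q$ only for $n\in N^Q$, and $Q$ acts nontrivially on $N$ (the matrices $M_{\sigma,7},M_{\tau,7}$ are not the identity); indeed the basis maps in the statement itself have values $y_1,y_0,y_1^2,y_0^2\notin M^Q$, e.g.\ $A_{11}(\xi_1)=y_1$. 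The paper does not argue that the top three generators are killed; it simply computes $\dim \rm{H}^1(N,M)^Q=\dim \rm{H}^1(N_7,M)^Q=18$ by two parallel Magma computations and concludes equality from the injectivity of the inflation map (Lemma \ref{LdecomposeH1}), which is both simpler and avoids your false intermediate claim.

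Second, parts (3) and (4) cannot be read off from the basis in part (2) by merely ``selecting the $A_{1i}$ that survive.'' For $M_U=\rm{H}_1(U)\subset M$ you get this way only a lower bound: you must still rule out that some other combination (involving $A_{11},A_{12},A_{14},A_{16}$ and elements of ${\rm Hom}(N_2,M^Q)$, whose values include $y_0^2$-type classes not in $\langle y_0y_1\rangle$) lands in $M_U$. For $M_X=\rm{H}_1(X)$ the issue is more serious: $M_X$ is a \emph{quotient} of $\rm{H}_1(U)$, so $\rm{H}^1(N,M_X)^Q$ is not a subspace of $\rm{H}^1(N,M)^Q$, and the map $\rm{H}^1(N,M_U)^Q\to\rm{H}^1(N,M_X)^Q$ induced by the quotient need not be surjective; a $Q$-invariant homomorphism into $M_X$ need not lift. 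The paper handles each coefficient module by a separate Magma computation rather than by inspection of the list in (2). A minor additional point: your linear-independence argument uses that the inflation of ${\rm Hom}(N_2,M^Q)$ vanishes on $\xi_3,\dots,\xi_7$, which requires the basis $\xi_1,\dots,\xi_7$ to be chosen adapted to the kernels of $N_7\to N_\ell$ (so that $\xi_{\ell+1},\dots,\xi_7$ generate $\ker(N_7\to N_\ell)$); this should be stated when the basis is constructed.
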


\begin{proof}
We prove each statement using a Magma calculation \cite{Magma}.  Here are some details for part (1).
By Corollary \ref{PrankN3}, $N$ is a ray class group with $N \simeq (\mathbb{Z}/3\mathbb{Z})^{10}$.
Using Magma, we find a basis for $N$ and $10 \times 10$ matrices $M_{\sigma,10}$ and $M_{\tau,10}$ 
for the conjugation action of $\sigma$ and $\tau$ on $N$ with respect to that basis.

Since $N$ acts trivially on $M$, an element of $\rm{H}^1(N,M)^Q$ can be uniquely represented as a $Q$-invariant homomorphism $\phi:N \to M$.
Since ${\rm dim}_{\mathbb{F}_3}(N) =10$ and ${\rm dim}_{\mathbb{F}_3}(M)=9$, 
$\phi$ is given by a $9 \times 10$ matrix $A_\phi$ 
with respect to the bases for $M$ and $N$. 
Then $\phi$ is $Q$-invariant if and only if, 
for every $\vec{n} \in N$,
\[A_\phi(\vec{n}^{\sigma})=B_{\sigma} \cdot A_\phi(\vec{n}), \ A_\phi(\vec{n}^{\tau})=B_{\tau} \cdot A_\phi(\vec{n}).\] 

To find the $Q$-invariant homomorphisms, we follow \cite[pages 21-22]{tensorbook} and set 
\[A_{\sigma,10}=M_{\sigma,10} \otimes I_9-I_{10} \otimes B_{\sigma}^t, \ A_{\tau,10}=M_{\tau,10} \otimes I_9 -I_{10}  \otimes B_{\tau}^t.\]
Then $\phi$ is $Q$-invariant if and only if $A_\phi \in \mathrm{Ker} (A_{\sigma,10}) \cap \mathrm{Ker}(A_{\tau,10})$.
Using Magma, we compute that ${\rm dim}(\mathrm{Ker} (A_{\sigma,10}) \cap \mathrm{Ker}(A_{\tau,10}))=18$.
Thus $\dim_{\mathbb{F}_p}(\rm{H}^1(N, M)^Q)=18$.

By an analogous calculation, $\dim_{\mathbb{F}_p}(\rm{H}^1(N_7, M)^Q)=18$.
By Lemma~\ref{LdecomposeH1}, the natural map $\rm{H}^1(N_7,M)^Q \rightarrow \rm{H}^1(N,M)^Q$ is injective.  
Thus $\rm{H}^1(N_7,M)^Q = \rm{H}^1(N,M)^Q$.
\end{proof}

\begin{remark} \label{R4kerd2}
Recall that $\bar{N}_H$ (resp.\ $N^*$) 
is the quotient of $N$ arising from the Heisenberg (resp.\ cyclotomic) extension of $K$.
By Propositions \ref{PCYC} and \ref{PsimplifyH1} and Notation \ref{NgenerateN},
$\rm{H}^1(N_2, M)^Q = \rm{H}^1(\bar{N}_H, M)^Q \oplus \rm{H}^1(N^*, M)^Q$.
Furthermore, by Propositions \ref{Pcyckerd2} and Example~\ref{Eheiskerd23}, 
${\rm Ker}(d_{2,H}) \simeq \rm{H}_1(U)^Q$ 
has dimension $3$ and
${\rm Ker}(d_2^*) \simeq \langle y_0^2y_1^2 \rangle$ has dimension $1$. 
\end{remark}

\begin{example}
Here are the matrices computed for the action of $\sigma$ and $\tau$ on $N_7$:
\begin{equation} \label{M7s}
M_{\sigma,7} = \left(\begin{array} {ccccccc}
1& 0& 1& 0& 1& 0& 1\\
0& 1 & 0& 0& 0& 2& 0 \\
0& 0 & 1& 2& 0& 2& 2 \\
0& 0 & 0 & 1 & 0 & 0 & 0 \\
0 & 0 & 0 & 0 & 1& 2 & 0 \\
0 & 0 & 0 & 0 & 0 & 1 & 0 \\
0 & 0 & 0 & 0 & 0 & 0 & 1 
\end{array} \right);
\ {\rm and} \ 
M_{\tau,7} = \left(\begin{array} {ccccccc}
1& 0& 0& 1& 2& 1& 2 \\
0& 1 & 0& 1& 0& 2& 1 \\
0& 0 & 1 & 0 & 0 & 1 & 0  \\
0 & 0 & 0 & 1 & 0& 0 & 0  \\
0 & 0 & 0 & 0 & 1 & 0 & 0 \\
0 & 0 & 0 & 0 & 0 & 1 & 0 \\
0 & 0 & 0 & 0 & 0 & 0 & 1 
\end{array} \right).\end{equation}
For $1 \leq \ell \leq 7$, $\sigma$ and $\tau$ act on $N_\ell$ 
via the upper-left $\ell \times \ell$ submatrices of $M_{\sigma,7}$ and $M_{\tau,7}$.
\end{example}

\subsection{Computing ${\rm Ker}(d_2)$ when $p=3$} \label{Skerd2}

By a Magma computation, ${\rm dim}_{\mathbb{F}_3}(\rm{H}^2(Q, N))=1$.
The extension in \eqref{Egaloisexact} therefore corresponds to an element of $\mathbb{F}^*_3$. 
We make an arbitrary choice of element of $\mathbb{F}^*_3$
and compute an explicit $2$-cocycle $\omega'$ representing $\omega \in \rm{H}^2(Q, N)$.
Since the elements of $\mathbb{F}^*_3$ negate each other, 
${\rm Ker}(d_2)$ is not affected by this choice.

By Section \ref{Sclass}, the element $\omega \in H^2(Q, N)$ classifying \eqref{Eshortexactsec4}
is determined by the elements $a,b,c \in N$ such that $a=s(\sigma)^3$, $b=s(\tau)^3$ and $c=[s(\sigma), s(\tau)]$.

\begin{proposition} \label{Pclass3}
Let $a_7, b_7, c_7$ denote the images of $a,b,c$ in $N_7$.
In terms of the basis $\xi_1, \ldots, \xi_7$,
\begin{equation} \label{Eabc}
a_7=[0, 2, 0, 2, 1, 0, 2], \ b_7=[0, 0, 0, 0, 0, 0, 2], \ {\rm and} \ c_7=[2,  1, 2,  0,  2,  1, 0].
\end{equation}
\end{proposition}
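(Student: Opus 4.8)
The plan is to reduce the statement to an explicit group-cohomology computation over $\mathbb{F}_3$ and then invoke Lemma~\ref{Lchangesection}. By Corollary~\ref{PrankN3}, $N \cong \mathop{\rm Cl}_{\mathfrak{p}^{28}}(L) \cong (\mathbb{Z}/3\mathbb{Z})^{10}$, and Magma provides a basis of $N$ together with the $10\times 10$ matrices $M_{\sigma,10}$, $M_{\tau,10}$ for the conjugation action of $Q=\langle \sigma,\tau\rangle$ on $N$ (already used in the proof of Lemma~\ref{LbasisA}). Since a Magma computation gives $\dim_{\mathbb{F}_3}\rm{H}^2(Q,N)=1$, and the extension \eqref{Eshortexactsec4} is non-split because $G$ surjects onto a Heisenberg group by Lemma~\ref{LdominateH}, the class $\omega$ is one of the two nonzero elements of $\rm{H}^2(Q,N)\cong \mathbb{F}_3$; these are interchanged by the $Q$-module automorphism $-1$ of $N$, hence give isomorphic extensions, so either choice records the same data up to an overall sign. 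I would therefore fix a generator of $\rm{H}^2(Q,N)$ and have Magma return a normalized $2$-cocycle $\omega'\colon Q\times Q\to N$ with $\omega'(q,1)=\omega'(1,q)=0$ representing it.

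Next, using the set-theoretic section $s\colon Q\to G$ attached to $\omega'$ as in Section~\ref{Sclass} (with $s(\sigma),s(\tau)$ the standard lifts, extended by \eqref{Esectionextend}), Lemma~\ref{Lchangesection} specialized to $p=3$ expresses $a=s(\sigma)^3$, $b=s(\tau)^3$, $c=[s(\sigma),s(\tau)]$ as explicit integer combinations of values of $\omega'$, namely
\[
a = \omega'(\sigma,\sigma)+\omega'(\sigma^2,\sigma),\qquad
b = \omega'(\tau,\tau)+\omega'(\tau^2,\tau),\qquad
c = \omega'(\sigma,\tau)-\omega'(\tau,\sigma),
\]
all viewed in $N\cong(\mathbb{Z}/3\mathbb{Z})^{10}$, which Magma evaluates directly from the cocycle. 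Finally I would pass to $N_7$: by Notation~\ref{NgenerateN}, $N_7={\rm Gal}(L_7/L)$ with $L_7$ the maximal elementary abelian $3$-extension of $L$ of modulus $\mathfrak{p}^{24}$ (as $m_7=24$), so $N_7$ is a quotient of $N$; composing $\omega'$ with $N\twoheadrightarrow N_7$ and applying the same formulas gives $a_7,b_7,c_7\in N_7$. Writing these in the basis $\xi_1,\dots,\xi_7$ of $N_7$ from Lemma~\ref{LbasisA} (the one whose initial truncations are bases of the $N_\ell$ for $\ell\le 7$) produces the tuples in \eqref{Eabc}.

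The main obstacle is the explicit computation of $\omega'$: one must compute $\rm{H}^2$ of $(\mathbb{Z}/3\mathbb{Z})^2$ with values in a $10$-dimensional $\mathbb{F}_3[Q]$-module, extract an actual cocycle, and then track it compatibly through the quotient $N\to N_7$ and the chosen basis $\xi_i$. As consistency checks I would verify that the image of $c_7$ in $N_1=\bar N_H$ is nontrivial while those of $a_7,b_7$ are trivial, matching Proposition~\ref{Ptrivialinquotient}, and that the images in $N_2=N_1N^*$ are compatible with Lemma~\ref{Lclasscyc}; the displayed tuples indeed have these properties.
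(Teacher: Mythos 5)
Your proposal is correct and takes essentially the same route as the paper: compute with Magma a normalized $2$-cocycle $\omega'$ representing $\omega$ (after noting $\dim_{\mathbb{F}_3}\mathrm{H}^2(Q,N)=1$ and that the sign choice is immaterial), read off $a,b,c$ via Lemma~\ref{Lchangesection}, and reduce to $N_7$ in the basis $\xi_1,\dots,\xi_7$. The only cosmetic difference is the commutator sign convention: the paper's proof computes $c=\omega'(\tau,\sigma)-\omega'(\sigma,\tau)$, following \eqref{Dccc}, whereas you write $c=\omega'(\sigma,\tau)-\omega'(\tau,\sigma)$; this is harmless since $\omega'$ itself is only fixed up to sign and $\mathrm{Ker}(d_2)$ is unaffected.
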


\begin{proof}
Using Magma, we compute a $2$-cocycle $\omega'$ for $\omega \in H^2(Q, N)$.
Using Lemma \ref{Lchangesection}, we compute $a= \omega'( \sigma, \sigma) + \omega'(\sigma^2, \sigma)$;
$b= \omega'(\tau, \tau) + \omega'( \tau^2, \tau)$; and
$c= \omega'(\tau, \sigma) - \omega'(\sigma, \tau)$.
\end{proof}

\begin{corollary} \label{Cfinish3} Let $p=3$.
\begin{enumerate}
\item If $M=\rm{H}_1(U,Y; \mathbb{Z}/3 \mathbb{Z})$ or $\rm{H}_1(U; \mathbb{Z}/3 \mathbb{Z})$, then ${\rm Ker}(d_2) = {\rm Ker}(d_{2,3})$ and ${\rm dim}_{\mathbb{F}_3}({\rm Ker}(d_{2}))=5$.
\item If $M=\rm{H}_1(X; \mathbb{Z}/3 \mathbb{Z})$, then ${\rm dim}_{\mathbb{F}_3}({\rm Ker}(d_2))=2$.
\end{enumerate}
\end{corollary}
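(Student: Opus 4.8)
The plan is to make the criterion of Theorem~\ref{Tkerd2} completely explicit for $p=3$ and then solve the resulting $\mathbb{F}_3$-linear problem with Magma, reusing the data already assembled in Lemma~\ref{LbasisA} and Proposition~\ref{Pclass3}. First I would pass to the quotient $N_7$: by Lemma~\ref{LbasisA}(1) the natural inclusion $\iota\colon\rm{H}^1(N_7,M)^Q\hookrightarrow\rm{H}^1(N,M)^Q$ of Lemma~\ref{LdecomposeH1} is an isomorphism, so every $Q$-invariant homomorphism $\phi\colon N\to M$ factors through $N\to N_7$, is recorded by the values $\phi(\xi_1),\dots,\phi(\xi_7)$, and satisfies $\phi(a)=\phi(a_7)$, $\phi(b)=\phi(b_7)$, $\phi(c)=\phi(c_7)$ with $a_7,b_7,c_7$ written in the $\xi$-basis in \eqref{Eabc}. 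Since $r=1$, with generators $\sigma=\tau_0$, $\tau=\tau_1$ and $a=s(\sigma)^3=a_0$, $b=s(\tau)^3=a_1$, $c=[s(\sigma),s(\tau)]=c_{0,1}$, Theorem~\ref{Tkerd2} says that $\phi\in{\rm Ker}(d_2)$ if and only if there are $m_0,m_1\in M$ with $\phi(a_7)=-N_\sigma m_0$, $\phi(b_7)=-N_\tau m_1$, and $\phi(c_7)=(1-\tau)m_0-(1-\sigma)m_1$. By Theorem~\ref{Tnorm} we have $N_\tau=0$, so the middle equation becomes $\phi(b_7)=0$, while $N_\sigma$ acts on $M$ as multiplication by $y_0^2y_1^2$, so the first equation forces $\phi(a_7)\in\langle y_0^2y_1^2\rangle$ and pins down the constant coordinate of $m_0$; on the subquotient $\rm{H}_1(X;\mathbb{Z}/3\mathbb{Z})$, where $y_0^2y_1^2=0$, it degenerates to $\phi(a_7)=0$.

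Next I would turn this into linear algebra. A general $\phi\in\rm{H}^1(N_7,M)^Q$ is expanded in the basis of Lemma~\ref{LbasisA}---the copies of $\mathrm{Hom}(N_2,M^Q)$ together with $A_{11},\dots,A_{18}$ when $M=\rm{H}_1(U,Y;\mathbb{Z}/3\mathbb{Z})$, and the smaller spanning sets of parts~(3) and~(4) for $\rm{H}_1(U;\mathbb{Z}/3\mathbb{Z})$ and $\rm{H}_1(X;\mathbb{Z}/3\mathbb{Z})$. Adjoining the auxiliary unknowns $m_0,m_1\in M$, the three conditions above cut out a linear subspace $W\subseteq\rm{H}^1(N_7,M)^Q\oplus M\oplus M$, and ${\rm Ker}(d_2)$ is the image of $W$ under the projection to the first summand; its dimension is then a rank computation. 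Here one uses the multiplication operators by $B_\sigma-1$, $B_\tau-1$ and $y_0^2y_1^2$ on $M$ (from \cite[Example~3.7]{WINF:Baction} and the identification $M\cong\Lambda_1$), together with the matrices $M_{\sigma,7}$, $M_{\tau,7}$ of \eqref{M7s}. Carrying this out gives $\dim_{\mathbb{F}_3}{\rm Ker}(d_2)=5$ for $M=\rm{H}_1(U,Y;\mathbb{Z}/3\mathbb{Z})$ and for $M=\rm{H}_1(U;\mathbb{Z}/3\mathbb{Z})$, and $\dim_{\mathbb{F}_3}{\rm Ker}(d_2)=2$ for $M=\rm{H}_1(X;\mathbb{Z}/3\mathbb{Z})$.

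Finally, for part~(1) I would identify the level at which ${\rm Ker}(d_2)$ is already detected. Repeating the computation with $\rm{H}^1(N_3,M)^Q$ in place of $\rm{H}^1(N_7,M)^Q$---i.e.\ restricting $W$ to $\iota(\rm{H}^1(N_3,M)^Q)\oplus M\oplus M$ and replacing $a_7,b_7,c_7$ by their images $a_3,b_3,c_3$ in $N_3$, read off from the quotient map $N_7\to N_3$ of Notation~\ref{NgenerateN}---I would check that ${\rm Ker}(d_{2,3})$ again has dimension $5$; since ${\rm Ker}(d_{2,3})\subseteq{\rm Ker}(d_2)$ by Lemma~\ref{Ld2d2bar}, equality of dimensions gives ${\rm Ker}(d_2)={\rm Ker}(d_{2,3})$, exhibiting ${\rm Ker}(d_2)$ as cut out by the degree-$3^5$ extension $L_3/K$. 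For part~(2) the criterion collapses to $\phi(a_7)=\phi(b_7)=0$ (and, since $Q$ acts trivially on $\rm{H}_1(X;\mathbb{Z}/3\mathbb{Z})$, also to $\phi(c_7)=0$), the rank computation gives the $2$-dimensional kernel, and Notation~\ref{NgenerateN} and Remark~\ref{Ridentify} identify this space with the subextension coming from the Heisenberg extension of $K$. I expect the only real difficulty to be organizational: assembling the images of $a_7,b_7,c_7$, the module operators, and the quotient maps $N_7\to N_\ell$ in one consistent coordinate system, and keeping track of the fact that $m_0$ appears in both the $a$- and the $c$-equation, so the three conditions cannot be imposed independently; once the matrices are in place the conclusion is a mechanical rank count.
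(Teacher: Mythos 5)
Your proposal is correct and follows essentially the same route as the paper: reduce to $N_7$ (and then $N_3$) via Lemma \ref{LbasisA} and Lemma \ref{Ld2d2bar}, impose the three conditions of Theorem \ref{Tkerd2} with the data $a_7,b_7,c_7$ of Proposition \ref{Pclass3} simplified by Theorem \ref{Tnorm}, and finish with a Magma rank computation for each of the three modules. The only cosmetic difference is that you keep the coupled linear system in $(\phi,m_0,m_1)$ and project, whereas the paper first lets Magma decouple the conditions into $\bar\phi(a_7)\in\langle y_0^2y_1^2\rangle$, $\bar\phi(b_7)=0$, $\bar\phi(c_7)\in\rm{H}_1(U)^Q$ before solving; both yield the same kernels.
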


\begin{proof}
By Lemma \ref{LbasisA}, a class $\phi \in \rm{H}^1(N,M)^Q$ 
is uniquely determined by a $Q$-invariant homomorphism $\bar{\phi}: N_7 \to M$.  
Also $\phi \in {\rm Ker}(d_2)$ if and only if $\bar{\phi} \in {\rm Ker}(\bar{d}_2)$ where 
$\bar{d}_2: \rm{H}^1(N_7, M)^Q \to \rm{H}^2(Q, M)$.  It thus suffices to 
compute using $H^1(N_7, M)^Q$, which is explicitly described in Lemma \ref{LbasisA}.

Let $\bar{\phi} \in \rm{H}^1(N_7, M)^Q$.
By Theorem \ref{Tkerd2}, 
when $p=3$, then $\bar{\phi}$ is in ${\rm Ker}(\bar{d}_2)$ if and only if there exist 
$m_0$ and $m_1$ in $M$ such that: 
\[\bar{\phi}(a_7) = - N_\sigma m_0, \  \bar{\phi}(b_7) = -N_\tau m_1, \ {\rm and} \   
\bar{\phi}(c_7) = (1-\tau)m_0 - (1-\sigma)m_1.\] 
Using Magma, this simplifies to $\bar{\phi}(a_7) \in \langle y_0^2 y_1^2 \rangle$ 
and $\bar{\phi}(b_7)=0$ and $\bar{\phi}(c_7) \in \rm{H}_1(U)^Q$. 

The calculations for $N_7$ and $N_3$ have the same outcome.
For $N_3$, we compute using Magma that
these conditions are satisfied if and only if 
\[\bar{\phi} \in {\rm Span}\{A_2, A_4, A_5, A_{10}, A_{13}+A_{18}\},\]
where $A_2: \xi_1 \mapsto y_0y_1^2$, $A_4: 
\xi_1 \mapsto y_0^2y_1$, $A_5: \xi_1 \mapsto y_0^2y_1^2$, 
$A_{10}: \xi_2 \mapsto y_0^2y_1^2$, and \[A_3+A_{18}: \xi_1 \mapsto y_0y_1, \  
\xi_3 \mapsto y_0^2y_1^2\] (all generators not listed map to zero).
Thus ${\rm dim}_{\mathbb{F}_3}({\rm Ker}(d_2))=5$.

Replacing $M$ by $\rm{H}_1(U)$, the images of the maps $A_2, A_4, A_5, A_{10}, A_{13}+A_{18}$ are in 
$\rm{H}_1(U)$ and ${\rm Ker}(d_2)$ is again 5-dimensional.
Next, we replace $M$ by $\rm{H}_1(X)$, which is 2-dimensional, say with basis $v_1$ and $v_2$.
In this case, ${\rm Ker}(d_2)$ has dimension 2 and a basis is given by $\phi_1: \xi_1 \mapsto v_1$ and $\phi_2: \xi_1 \mapsto v_2$, where all generators not listed map to zero.
\end{proof}

\begin{remark} \label{Ridentify}
When $M=\rm{H}_1(X; \mathbb{Z}/3 \mathbb{Z})$, this shows that ${\rm Ker}(d_2)$ is determined by the Heisenberg extension of $K$.
When $M = \rm{H}_1(U,Y; \mathbb{Z}/3 \mathbb{Z})$, then ${\rm Ker}(d_2)$ is determined by the extension $L_3/K$.
Note that $G_3={\rm Gal}(L_3/K)$ is non-abelian since $c_3=[2,1,2]$ is non-trivial and has exponent $9$ since
$a_3=[0,2,0]$ is non-trivial.  In Magma notation, the group $G_3$ is $\textbf{SmallGroup}(243, 13)$.
\end{remark}

\bibliography{raysubmit}

\begin{thebibliography}{10}

\bibitem{Anderson}
Greg~W. Anderson.
\newblock Torsion points on {F}ermat {J}acobians, roots of circular units and
  relative singular homology.
\newblock {\em Duke Math. J.}, 54(2):501--561, 1987.

\bibitem{Magma}
Wieb Bosma, John Cannon, and Catherine Playoust.
\newblock The {M}agma algebra system. {I}. {T}he user language.
\newblock {\em J. Symbolic Comput.}, 24(3-4):235--265, 1997.
\newblock Computational algebra and number theory (London, 1993).

\bibitem{Brown}
Kenneth~S. Brown.
\newblock {\em Cohomology of groups}, volume~87 of {\em Graduate Texts in
  Mathematics}.
\newblock Springer-Verlag, New York-Berlin, 1982.

\bibitem{CasselsFrohlich}
Cassels and Fr\"ohlich.
\newblock {\em Algebraic Number Theory}.
\newblock London: Academic Press, 1967.

\bibitem{Cohen}
Henri Cohen.
\newblock {\em Advanced topics in computational number theory}, volume 193 of
  {\em Graduate Texts in Mathematics}.
\newblock Springer-Verlag, New York, 2000.

\bibitem{WINF:birs}
Rachel Davis, Rachel Pries, Vesna Stojanoska, and Kirsten Wickelgren.
\newblock Galois action on the homology of {F}ermat curves.
\newblock In {\em Directions in number theory}, volume~3 of {\em Assoc. Women
  Math. Ser.}, pages 57--86. Springer, [Cham], 2016.

\bibitem{WINF:Baction}
Rachel Davis, Rachel Pries, Vesna Stojanoska, and Kirsten Wickelgren.
\newblock The {G}alois action and cohomology of a relative homology group of
  {F}ermat curves.
\newblock {\em J. Algebra}, 505:33--69, 2018.

\bibitem{Ellenberg_2_nil_quot_pi}
Jordan Ellenberg.
\newblock 2-nilpotent quotients of fundamental groups of curves.
\newblock Preprint, 2000.

\bibitem{GrasK2}
Georges Gras.
\newblock Remarks on {$K_2$} of number fields.
\newblock {\em J. Number Theory}, 23(3):322--335, 1986.

\bibitem{Grasbook}
Georges Gras.
\newblock {\em Class field theory}.
\newblock Springer Monographs in Mathematics. Springer-Verlag, Berlin, 2003.
\newblock From theory to practice, Translated from the French manuscript by
  Henri Cohen.

\bibitem{GrasVandiver}
Georges Gras.
\newblock Test of {V}andiver's conjecture with {G}auss sums -- heuristics.
\newblock arXiv:1808.03443, 2018.

\bibitem{GrasIncomplete}
Georges Gras.
\newblock Practice of incomplete $p$-ramification over a number field --
  {H}istory of abelian $p$-ramification.
\newblock arXiv:1904.10707, 2019.

\bibitem{GrasJaulent}
Georges Gras and Jean-Fran\c{c}ois Jaulent.
\newblock Sur les corps de nombres r\'eguliers.
\newblock {\em Math. Z.}, 202(3):343--365, 1989.

\bibitem{Koch}
Helmut Koch.
\newblock {\em Galois theory of {$p$}-extensions}.
\newblock Springer Monographs in Mathematics. Springer-Verlag, Berlin, 2002.
\newblock With a foreword by I. R. Shafarevich, Translated from the 1970 German
  original by Franz Lemmermeyer, With a postscript by the author and
  Lemmermeyer.

\bibitem{Milnor}
John Milnor.
\newblock {\em Introduction to algebraic {$K$}-theory}.
\newblock Princeton University Press, Princeton, N.J.; University of Tokyo
  Press, Tokyo, 1971.
\newblock Annals of Mathematics Studies, No. 72.

\bibitem{minactan}
J\'an Min\'a\v{c} and Nguyen~Duy T\^an.
\newblock Construction of unipotent {G}alois extensions and {M}assey products.
\newblock {\em Adv. Math.}, 304:1021--1054, 2017.

\bibitem{Movthesis}
A.~Movahhedi.
\newblock Sur les {$p$}-extensions des corps {$p$}-rationnels.
\newblock {\em Math. Nachr.}, 149:163--176, 1990.

\bibitem{MovDo}
A.~Movahhedi and T.~Nguyen-Quang-Do.
\newblock Sur l'arithm\'etique des corps de nombres {$p$}-rationnels.
\newblock In {\em S\'eminaire de {T}h\'eorie des {N}ombres, {P}aris 1987--88},
  volume~81 of {\em Progr. Math.}, pages 155--200. Birkh\"auser Boston, Boston,
  MA, 1990.

\bibitem{Nakagoshi}
Norikata Nakagoshi.
\newblock The structure of the multiplicative group of residue classes modulo
  {${\mathfrak p}^{N+1}$}.
\newblock {\em Nagoya Math. J.}, 73:41--60, 1979.

\bibitem{Narkiewicz}
W.~Narkiewicz.
\newblock Global class-field theory.
\newblock In {\em Handbook of algebra, {V}ol.\ 1}, pages 365--393.
  North-Holland, Amsterdam, 1996.

\bibitem{Neukirch-Schmidt-Wingberg}
J{\"u}rgen Neukirch, Alexander Schmidt, and Kay Wingberg.
\newblock {\em Cohomology of number fields}, volume 323 of {\em Grundlehren der
  Mathematischen Wissenschaften [Fundamental Principles of Mathematical
  Sciences]}.
\newblock Springer-Verlag, Berlin, second edition, 2008.

\bibitem{tensorbook}
Juan~Ram\'on Ru\'\i~z Tolosa and Enrique Castillo.
\newblock {\em From vectors to tensors}.
\newblock Universitext. Springer-Verlag, Berlin, 2005.

\bibitem{SW92}
Alexander Schmidt and Kay Wingberg.
\newblock On the fundamental group of a smooth arithmetic surface.
\newblock {\em Math. Nachr.}, 159:19--36, 1992.

\bibitem{Sengun}
Mehmet~Haluk {\c{S}}eng{\"u}n.
\newblock The nonexistence of certain representations of the absolute {G}alois
  group of quadratic fields.
\newblock {\em Proc. Amer. Math. Soc.}, 137(1):27--35, 2009.

\bibitem{SerreLF}
Jean-Pierre Serre.
\newblock {\em Local fields}, volume~67 of {\em Graduate Texts in Mathematics}.
\newblock Springer-Verlag, New York-Berlin, 1979.
\newblock Translated from the French by Marvin Jay Greenberg.

\bibitem{Sharifi}
Romyar~Thomas Sharifi.
\newblock {\em Twisted {H}eisenberg representations and local conductors}.
\newblock ProQuest LLC, Ann Arbor, MI, 1999.
\newblock Thesis (Ph.D.)--The University of Chicago.

\bibitem{washingtonbook}
Lawrence~C. Washington.
\newblock {\em Introduction to cyclotomic fields}, volume~83 of {\em Graduate
  Texts in Mathematics}.
\newblock Springer-Verlag, New York, second edition, 1997.

\bibitem{zarhin}
Ju.~G. Zarhin.
\newblock Noncommutative cohomology and {M}umford groups.
\newblock {\em Mat. Zametki}, 15:415--419, 1974.

\end{thebibliography}
\bibliographystyle{plain}

\end{document}